\newtheorem{theorem}{Theorem}[section]
\newtheorem{proposition}[theorem]{Proposition}
\newtheorem{lemma}[theorem]{Lemma}
\newtheorem{corollary}[theorem]{Corollary}
\newtheorem{conjecture}{Conjecture}
\theoremstyle{definition}
\newtheorem{definition}[theorem]{Definition}
\newtheorem{example}[theorem]{Example}
\newtheorem{remark}[theorem]{Remark}
\title{Waring loci and the Strassen conjecture}
\author[E. Carlini]{Enrico Carlini}
\address[E. Carlini]{DISMA- Department of Mathematical Sciences, Politecnico di Torino, Turin, Italy}
\email{enrico.carlini@polito.it}
\author[M. V. Catalisano]{Maria Virginia Catalisano}
\address[M. V. Catalisano]{Dipartimento di Ingegneria Meccanica, Energetica, Gestionale e dei
Trasporti, Universita degli studi di Genova, Genoa, Italy}
\email{catalisano@diptem.unige.it}
\author[A. Oneto]{Alessandro Oneto}
\address[A. Oneto]{INRIA Sophia Antipolis M\'editerran\'ee (team Aromath), Sophia Antipolis, France}
\email{alessandro.oneto@inria.fr}
\newcommand{\rk}{\mathrm{rk}}
\newcommand{\CC}{\mathbb{C}}
\newcommand{\PP}{\mathbb{P}}
\newcommand{\XX}{\mathbb{X}}
\newcommand{\caC}{\mathcal{C}}
\newcommand{\caL}{\mathcal{L}}
\newcommand{\caW}{\mathcal{W}}
\newcommand{\caF}{\mathcal{F}}
\begin{document}
\setlength{\parindent}{5pt}

\begin{abstract}
The Waring locus of a form $F$ is the collection of the degree one forms appearing in some minimal sum of powers decomposition of $F$. In this paper, we give a complete description of Waring loci for several family of forms, such as quadrics, monomials, binary forms and plane cubics. We also introduce a Waring loci version of Strassen's Conjecture, which implies the original conjecture, and we prove it in many cases.
\end{abstract}

\maketitle

\section{Introduction}
Let $S = \CC[x_0,\ldots,x_n]=\bigoplus_{i \geq 0}S_i$ be the standard graded
polynomial ring. An element in $S_i$, that is a degree $i$ homogeneous polynomial, is called a {\it form}.

A {\em Waring decomposition}, also called a {\em sum of powers decomposition}, of $F\in S_d$ is an expression of the form
\[F=L_1^d+\ldots+L_r^d,\]
for linear forms $L_i\in S_1$. The \emph{Waring rank}, or simply \emph{rank}, of $F$ is
\[\rk(F)=\min\{r:F=L_1^d+\ldots+L_r^d, ~L_i\in S_1\mbox{ for } 1\leq i\leq r\},\]
and we say that a Waring decomposition of $F$ is {\it minimal} if it involes $\rk(F)$ summands.

In the last decades, an intense research activity focused on computing Waring
ranks and (minimal) Waring decompositions of homogeneous polynomials. A celebrated result in this field is due to J.Alexander and A.Hirschowitz who determined the rank of a generic form \cite{AH}, but still very little is known for given specific forms. The main abstract tools to deal with Waring ranks are in \cite{LT,CCCGW}.

One of the main reason for the great interest in Waring ranks and Waring decompositions is due to the relations with the theory of symmetric tensors
and their decompositions as sums of rank one tensors which have applications
in Algebraic Statistics, Biology, Quantum Information Theory and more, see \cite{L}.

In this paper we investigate minimal Waring decompositions. Of particular interest are the cases when
the minimal Waring decomposition is {\it unique}, called in the literature
the {\it identifiable} cases, see \cite{CC, Mel06, Mel09, BCO, COV, GM16}. Since finding a minimal Waring decomposition is usually beyond our computational capabilities, we decide to investigate properties of {\it all} minimal Waring decompositions of a given form at once.

\begin{definition}
 Given a form $F$, we define the {\it Waring locus} of $F$ as the set of linear forms that appear, up to scalar, in a
minimal Waring decomposition of $F$, namely
$$
\caW_F = \{[L]\in\PP(S_1) : \exists L_2,\ldots,L_r\in S_1,~ F \in \langle L^d,L_2^d,\ldots,L_r^d\rangle,~r =\rk(F)\}.
$$
We define the \emph{locus of forbidden points} as its complement,
$
\caF_F=\PP(S_1)\setminus\caW_F.
$
\end{definition}

In \cite{BC}, the authors suggested the importance of the study of the Waring loci of homogeneous
polynomials. In particular, they proved that $\caW_F$ has no isolated points if $F\in S_d$ is not identifiable and if $\rk(F)< \frac{3d}{2}$.

One of our main result is a description of $\caW_F$, or equivalently of $\caF_F$, in the following cases:
\begin{enumerate}
 \item quadrics, that is degree two forms, see Corollary \ref{Quadrics Corollary};
 \item monomials, see Theorem \ref{monomials Thm};
 \item binary forms, that is forms in two variable, see Theorem \ref{binaryTHM};
 \item plane cubics, that is degree three forms in three variables, see Section \ref{Plane Cubics Section};
 \item sums of particular families of homogeneous polynomials in different set of variables, see Theorem \ref{waringdegTHM} and Section \ref{Furtherresultsandopenproblems}.
\end{enumerate}

In Section \ref{Basic Section}, we introduce the basics facts and our main tool: the {\it Apolarity Lemma}, Lemma \ref{Apolarity Lemma}.
The Apolarity Lemma provides a very explicit recipe to find Waring decompositions of an homogeneous
polynomials $F$. In particular, it states that Waring decompositions of $F$ corresponds to ideals of reduced points contained in the ideal $F^{\perp}$,
namely the ideal of polynomials annihilating $F$ by acting as differentials. The reason why we succeeded in finding Waring loci in the cases listed above is that those are the cases when we can give a very precise description of all the possible minimal set of reduced points contained in the annihilating ideals.

In Section \ref{Results Section}, we present our main results about Waring loci.

In Section \ref{Strassen's conjecture}, we discuss relations with Strassen's conjecture.

In Section \ref{Furtherresultsandopenproblems}, we present some more technical results and open problems.

\smallskip
{\bf Acknowledgment.} The authors want to thank Jaros\l{}aw Buczy\'nski and Brian Harbourne for comments on an earlier version of the paper. They are also very grateful to the anonymous referee for her/his constructive criticism. The third author wants to thank the first author and the School of Mathematical Sciences at Monash University (Australia) for the hospitality during a visit of three months while the work has been mostly done. The visit has been partially supported by {\it G~S~Magnuson Foundation} from {\it Kungliga Vetenskapsakademien} (Sweden). The first and second author were partially supported by GNSAGA of INdAM and by MIUR  funds (Italy).

\section{Basics}\label{Basic Section}

We introduce some basic notions on {\it Apolarity theory}, see also \cite{IK, G}.

We consider two polynomial rings $S=\CC[x_0,\ldots,x_n]=\bigoplus_{i\geq 0} S_i$ and $T=\CC[X_0,\ldots,X_n]=\bigoplus_{i\geq 0} T_i$ with standard gradation,
where $S$ has the structure of a $T$-module via differentiation; namely, we consider the \emph{apolarity action} given by
$$g\circ F = g(\partial_{x_0},\ldots,\partial_{x_n})F,~~ \text{ for }g\in T,~F\in S.$$

Given $F\in S_d$ we define the {\it apolar ideal} of $F$ as
\[F^\perp=\{\partial\in T: \partial\circ F=0\}.\]

We say that $F\in \CC[x_0,\ldots,x_n]$ \emph{essentially involves} $t+1$ variables if $\dim (F^\perp)_1=n-t$. In other words, if $F$ essentially involves $t+1$ variables, there exist linear forms $l_0,\ldots,l_t\in \CC[x_0,\ldots,x_n]$ such that $F\in\mathbb{C}[l_0,\ldots,l_t]$, see \cite{Creducing}.

We are interested in describing the minimal Waring decompositions of a form $F\in S_d$ and our main tool is the following.

\begin{lemma}[{\sc Apolarity Lemma}]\label{Apolarity Lemma}
 Let $\XX = \{P_1,\ldots,P_s\}\subset\PP^n$ be a set of reduced points where $P_i=[p_{0}^{(i)}:\ldots:p_n^{(i)}]$, for all $i = 1,\ldots,s$, and let $L_i = p_{0}^{(i)}x_0+\ldots+p_n^{(i)}x_n$, for all $i = 1,\ldots,s$. Then, if $F$ is a degree $d$ homogeneous polynomial, the following are equivalent:
 \begin{enumerate}
  \item $I_{\XX}\subset F^\perp$;
  \item $F = c_1L_{1}^d+\ldots+c_sL_{s}$, for  $c_1,\ldots,c_s\in\CC$.
 \end{enumerate}
\end{lemma}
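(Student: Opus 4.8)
The plan is to reduce everything to one explicit computation: how a constant-coefficient differential operator acts on a power of a linear form. Writing $P = [a_0:\cdots:a_n]$ and $L = a_0x_0+\cdots+a_nx_n$, I would first verify, by induction on the degree, that for a monomial $X^\alpha\in T_j$ one has $X^\alpha\circ L^d = \tfrac{d!}{(d-j)!}\,a^\alpha L^{d-j}$ when $j\le d$, and $X^\alpha\circ L^d = 0$ when $j>d$. Extending by linearity gives, for any homogeneous $g\in T_j$,
\[
g\circ L^d = \tfrac{d!}{(d-j)!}\,g(a_0,\ldots,a_n)\,L^{d-j}\ \ (j\le d),\qquad g\circ L^d = 0\ \ (j>d).
\]
Two consequences will be used repeatedly: for $\deg g\le d$ we have $g\circ L^d = 0$ exactly when $g(P)=0$, i.e.\ when $g\in I_P$; and when $\deg g = d$ the quantity $g\circ L^d = d!\,g(P)$ is a scalar.

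For the implication $(2)\Rightarrow(1)$, assume $F=c_1L_1^d+\cdots+c_sL_s^d$ and pick any homogeneous $g\in I_{\XX}$. If $\deg g>d$, then $g\circ L_i^d=0$ for every $i$, so $g\circ F=0$. If $\deg g = j\le d$, then $g$ vanishes at each $P_i$ because $g\in I_{\XX}\subseteq I_{P_i}$, so $g\circ F = \sum_i c_i\tfrac{d!}{(d-j)!}g(P_i)L_i^{d-j} = 0$. Hence $I_{\XX}\subseteq F^\perp$.

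For $(1)\Rightarrow(2)$, the idea is to pass to degree $d$ and argue by duality. The apolar pairing $T_d\times S_d\to\CC$, $(g,H)\mapsto g\circ H$, is perfect, since in the monomial bases its Gram matrix is diagonal with nonzero entries $\alpha!$. Let $V=\langle L_1^d,\ldots,L_s^d\rangle\subseteq S_d$, and for a subspace $W$ write $W^{\circ}$ for its annihilator under this pairing. By the computation above, a form $g\in T_d$ annihilates every $L_i^d$ precisely when $g(P_i)=0$ for all $i$, so $V^{\circ}=(I_{\XX})_d$; likewise $(F^\perp)_d=\langle F\rangle^{\circ}$. Restricting hypothesis $(1)$ to degree $d$ gives $(I_{\XX})_d\subseteq (F^\perp)_d$; taking annihilators reverses the inclusion and, by perfectness, is an involution, so $\langle F\rangle = ((F^\perp)_d)^{\circ}\subseteq ((I_{\XX})_d)^{\circ}=V$. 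Thus $F\in\langle L_1^d,\ldots,L_s^d\rangle$, which is $(2)$.

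The only genuinely delicate point is the non-degeneracy of the apolar pairing on $T_d\times S_d$ and the resulting double-annihilator identity $(W^{\circ})^{\circ}=W$; the rest is bookkeeping with the differentiation formula. I would also note in passing that the hard direction uses only the degree-$d$ part of $I_{\XX}\subseteq F^\perp$, so in fact the full containment is equivalent to its degree-$d$ truncation whenever $\XX$ is a set of reduced points.
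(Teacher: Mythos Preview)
Your argument is correct and is essentially the standard proof of the Apolarity Lemma. The paper itself does not give a proof of this statement: it is presented there as a classical fact, with references to \cite{IK,G}, and the surrounding Section~\ref{Basic Section} merely records the definitions and the statement before moving on to an example. So there is no ``paper's own proof'' to compare against.

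That said, your write-up lines up with the treatment one finds in those references: the core computation $g\circ L^d=\tfrac{d!}{(d-j)!}\,g(P)\,L^{d-j}$, the easy direction $(2)\Rightarrow(1)$ by linearity, and the converse via non-degeneracy of the apolar pairing in degree $d$ together with the identification $(I_{\XX})_d=\langle L_1^d,\ldots,L_s^d\rangle^{\circ}$. Your closing remark that only the degree-$d$ part of the inclusion $I_{\XX}\subset F^\perp$ is actually used is also accurate and worth keeping.
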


A set of points $\mathbb{X}$ such that the conditions of Apolarity Lemma hold is said to be \emph{apolar to $F$}.


\begin{example}
 Consider the monomial $M = xyz\in\CC[x,y,z]$. It is easy to check that $M^{\perp}=(X^2,Y^2,Z^2)$. Hence the
 ideal $I = (X^2-Y^2,X^2-Z^2)\subset F^\perp$
 corresponds to the four reduced points $[1:\pm 1:\pm 1]$ and we have the Waring decomposition
 $$M = \frac{1}{24}\left[(x+y+z)^3 - (x-y+z)^3 - (x+y-z)^3 + (x-y-z)^3\right].$$
\end{example}

We can describe the Waring locus of a form $F$ in terms of the apolar points to $F$, namely
$$
\caW_F = \{P\in\PP^n : P\in\mathbb{X},~I_\mathbb{X}\subset F^\perp\mbox{ and } |\mathbb{X}|=\rk(F)\}.
$$

The following result, also given in \cite{BL} in the case of tensors, allows us to study a form $F$ in the ring of polynomials with the smallest number of variables.
In particular, we want to show that, if $F\in\CC[y_0,\ldots,y_m]$ essentially involves $n+1$ variables and $\mathbb{X}$ is a minimal
set of points apolar to $F$, then $\mathbb{X}\subset\PP^m$ is contained in a $n$-dimensional linear subspace of $\PP^m$. Hence,
$\caW_F\subset\PP^n$ contains all points belonging to any minimal set of points apolar to $F$.

\begin{proposition}\label{prop:essential_variables}
Let $F\in \mathbb{C}[x_0,\ldots,x_n, x_{n+1}, \ldots, x_{m}]$ be a degree $d$ form such that $(F^\perp)_1=(X_{n+1},\ldots,X_{m})$. If
\[F=\sum_1^r L_i^d\]
where $r=\rk(F)$ and the $L_i$ are linear forms in $\mathbb{C}[x_0,\ldots,x_n, x_{n+1}, \ldots, x_{m}]$, then
\[
L_i\in \mathbb{C}[x_0,\ldots,x_n]\subset \mathbb{C}[x_0,\ldots,x_n, x_{n+1}, \ldots, x_{m}]
\]
for all $i,1\leq i\leq r$.
\end{proposition}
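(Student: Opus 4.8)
The plan is to use the Apolarity Lemma together with a dimension count on the linear forms $L_i$. Suppose $F = \sum_{i=1}^r L_i^d$ with $r = \rk(F)$, and let $\mathbb{X} = \{P_1,\ldots,P_r\}$ be the corresponding set of reduced points, where $P_i$ is the point of $\PP^m$ whose coordinates are the coefficients of $L_i$. By the Apolarity Lemma we have $I_{\mathbb{X}} \subseteq F^\perp$. The key observation is that $(F^\perp)_1 = (X_{n+1},\ldots,X_m)_1$ is a linear space of dimension $m-n$, so each of the linear forms $X_{n+1},\ldots,X_m$ lies in $I_{\mathbb{X}}$. This forces every point $P_i$ to lie in the linear subspace $\Lambda = \{X_{n+1} = \cdots = X_m = 0\} \cong \PP^n$, which says exactly that each $L_i \in \CC[x_0,\ldots,x_n]$.

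First I would make precise the correspondence $L_i \leftrightarrow P_i$ and recall that, since $L_i^d$ appears in a minimal decomposition, the $P_i$ are distinct and $\mathbb{X}$ is reduced, so the Apolarity Lemma applies in the form giving $I_{\mathbb{X}} \subseteq F^\perp$. Next I would extract the degree-one part: $(I_{\mathbb{X}})_1 \subseteq (F^\perp)_1 = \langle X_{n+1},\ldots,X_m\rangle$. Actually the inclusion I want goes the other way — I need $X_j \in I_{\mathbb{X}}$ for $j = n+1,\ldots,m$. This follows because $(F^\perp)_1$ consists of linear forms vanishing on every point apolar to $F$; more directly, if $X_j \circ F = 0$ then $X_j$ annihilates $F$, and since $F = \sum c_i L_i^d$ one checks that the vanishing of $X_j$ on each $L_i$ is forced. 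The cleanest route: a linear form $\ell = \sum a_k X_k$ lies in $(F^\perp)_1$ if and only if $\ell(P_i) = 0$ for the generic choice — but to be safe I would argue that $(X_{n+1},\ldots,X_m) = (F^\perp)_1 \supseteq$ is not automatic, so instead I would show directly that each $X_j$ with $j > n$ vanishes at every $P_i$. Suppose not; then some $L_i$ involves $x_j$ nontrivially. One then derives a contradiction with minimality by projecting: replacing each $L_i$ by its truncation $\bar L_i \in \CC[x_0,\ldots,x_n]$ and showing $F = \sum \bar L_i^d$ still, because $F$ itself does not involve $x_{n+1},\ldots,x_m$; this uses that the substitution $x_j \mapsto 0$ for $j > n$ fixes $F$ and sends $\sum L_i^d$ to $\sum \bar L_i^d$, while possibly collapsing some summands, which would contradict $r = \rk(F)$ unless no collapsing occurs and each $\bar L_i = L_i$.

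The main obstacle is handling the possibility that the truncation identifies two previously distinct forms or produces a shorter decomposition: a priori $\sum \bar L_i^d$ could have fewer than $r$ terms or the $\bar L_i$ could fail to be pairwise non-proportional. But this is exactly what gives the contradiction — any such collapse would exhibit a decomposition of $F$ with strictly fewer than $\rk(F)$ summands, which is impossible. Hence the truncation map must be injective on the set $\{[L_i]\}$ and must preserve each $L_i$ up to the degree-$d$ power, i.e. $L_i = \bar L_i$ (after rescaling, but the power kills scalars of modulus-one type; more carefully, $L_i^d = \bar L_i^d$ with both linear forms forces $L_i = \zeta \bar L_i$ for a $d$-th root of unity $\zeta$, and since $\bar L_i$ is the honest truncation this gives $L_i \in \CC[x_0,\ldots,x_n]$). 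I would therefore organize the proof as: (1) set up the substitution $\sigma: x_j \mapsto 0$ for $j > n$; (2) note $\sigma(F) = F$; (3) note $\sigma(L_i^d) = \bar L_i^d$; (4) conclude $F = \sum \bar L_i^d$; (5) invoke minimality of $r$ to force no collapse and $\bar L_i^d = L_i^d$ for all $i$; (6) deduce $L_i \in \CC[x_0,\ldots,x_n]$.
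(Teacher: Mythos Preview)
Your truncation argument has a genuine gap at step~(5). Setting $x_j\mapsto 0$ for $j>n$ does give $F=\sum_{i=1}^r \bar L_i^{\,d}$, and minimality of $r$ does force the $\bar L_i$ to be nonzero and pairwise non-proportional (otherwise the decomposition would collapse to fewer than $\rk(F)$ terms). But this only shows that the truncations $\bar L_1,\ldots,\bar L_r$ furnish \emph{some} minimal decomposition of $F$ inside $\CC[x_0,\ldots,x_n]$; it does not force $\bar L_i^{\,d}=L_i^{\,d}$ termwise. That implication would require identifiability of $F$, which you have no reason to assume. In other words, you have proved that $F$ admits a minimal decomposition in the small ring, not that the \emph{given} decomposition already lies there. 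Your parenthetical ``$L_i^d=\bar L_i^d$ with both linear forms forces $L_i=\zeta\bar L_i$'' presupposes exactly the termwise equality that is missing.

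The paper closes this gap by a different route. Assuming some $L_1$ genuinely involves $x_{n+1}$, one has on the one hand $\rk(F-L_1^d)\le r-1$ from the remaining summands. On the other hand, since $L_1$ is linearly independent of $x_0,\ldots,x_n$, after a linear change of coordinates $L_1$ plays the role of a new variable $y$ not appearing in $F$, and the cited result \cite[Proposition~3.1]{CCC} gives $\rk(F+y^d)=\rk(F)+1$; hence $\rk(F-L_1^d)=r+1$, a contradiction. The key nontrivial input is this additivity-of-rank fact for a single power of a fresh variable, which your substitution argument does not supply.
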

\begin{proof} 
We proceed by contradiction. Assume that $L_1=x_{n+1}+\sum_{i\neq n+1} a_i x_i$, that is to assume that $L_1$  actually
involves the variable $x_{n+1}$. By assumption $\rk(F-L_1^d)< r=\rk(F)$. However, since $L_1$ is linearly independent with
$x_1,\ldots,x_n$ we can apply the following fact (see \cite[Proposition 3.1]{CCC}): if $y$ is a new variable, then
\[\rk(F+y^d)=\rk(F)+1.\]
Hence, $\rk(F-L_1^d)=\rk(F)+1$ and this is a contradiction.
\end{proof}

\begin{remark}\label{essential variables}
 Using the previous result, performing a linear change of variables, and restricting the ring, we may always assume
 that $F\in S_d$ essentially involves $n+1$ variables; hence, we always see $\caW_F$ and $\caF_F$ as subsets of $\PP^n$.
\end{remark}

\section{Waring loci}\label{Results Section}
In this section, we give our results about $\caW_F$ and $\caF_F$.

\subsection{Quadrics}

We begin with the study of elements of $S_2$, i.e. quadrics in $\PP^n$. We recall that to each quadric $Q$ we can associate a symmetric
$(n+1)\times (n+1)$ matrix $A_Q$ and that $\rk(Q)$ equals the rank of $A_Q$.

\begin{proposition}\label{Quadrics Proposition}
If $Q=x_0^2+\ldots+x_n^2$, then $\caF_Q=V(\check Q)\subset\PP^n$, where $\check Q=X_0^2+\ldots+X_n^2\in T_2$.
\end{proposition}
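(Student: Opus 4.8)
The plan is to use the Apolarity Lemma together with the fact that $\rk(Q) = n+1$ for the full-rank quadric $Q = x_0^2 + \cdots + x_n^2$ (since $A_Q$ is the identity matrix). First I would observe that, because $Q$ essentially involves all $n+1$ variables and $\rk(Q) = n+1$, a point $[L] = [a_0 x_0 + \cdots + a_n x_n]$ lies in $\caW_Q$ precisely when $Q - cL^2$ has rank at most $n$ for some scalar $c$, i.e.\ when $L$ can be completed to a minimal apolar set of $n+1$ points. The key computation is to determine the apolar ideal: one checks directly that $Q^\perp$ is generated in degree two by the quadrics $X_iX_j$ for $i\neq j$ together with the differences $X_i^2 - X_j^2$, so that $(Q^\perp)_2$ is the $\binom{n+1}{2} + n$-dimensional space of quadratic forms $\sum b_{ij}X_iX_j$ (symmetric, $i \le j$) satisfying $\sum_i b_{ii} = 0$ — equivalently, the orthogonal complement of $\check Q = X_0^2 + \cdots + X_n^2$ under the natural pairing between $S_2$ and $T_2$.

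Next I would translate the membership condition into a rank statement. If $L = a_0 x_0 + \cdots + a_n x_n$, then the linear form $X_L := a_0 X_0 + \cdots + a_n X_n$ is the "first differential" associated to the point $[L]$, and $[L] \in \caW_Q$ iff there is an apolar set of $n+1$ reduced points containing $[L]$. The clean way to see the dichotomy is: $[L]\in\caF_Q$ iff $Q - cL^2$ fails to be a sum of $n$ squares for every $c$. Write $Q - cL^2$ in matrix form as $I - c\,aa^\top$, whose eigenvalues are $1$ (with multiplicity $n$) and $1 - c\langle a,a\rangle$. This matrix drops rank exactly when $c\langle a, a\rangle = 1$, which is solvable for $c$ precisely when $\langle a, a\rangle = a_0^2 + \cdots + a_n^2 \neq 0$. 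Hence when $\sum a_i^2 \neq 0$ we can choose $c$ making $Q - cL^2$ a rank-$n$ quadric, and by Apolarity (applied to that rank-$n$ quadric, which has rank $= n$ and so admits an apolar set of $n$ points) we get an apolar set of $n+1$ points for $Q$ through $[L]$, so $[L]\in\caW_Q$. Conversely, if $\sum a_i^2 = 0$, then $Q - cL^2$ has full rank $n+1$ for every $c\neq 0$, so $L$ can never be one of only $n+1$ apolar points; thus $[L]\in\caF_Q$. This shows $\caF_Q = V(X_0^2 + \cdots + X_n^2) = V(\check Q)$.

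The main obstacle I anticipate is the converse direction — showing that no minimal ($n+1$-point) apolar scheme can contain a point on $V(\check Q)$. The rank argument above handles it cleanly if one is careful that "$Q - cL^2$ has rank $\le n$" is genuinely equivalent to "$[L]$ extends to an $(n+1)$-point apolar set": the forward implication needs that a rank-$n$ quadric has a genuine apolar set of $n$ \emph{reduced} points (true, by diagonalizing and applying Apolarity in the smaller ring of $n$ essential variables), and the reverse implication needs that if $[L], [L_2],\ldots,[L_{n+1}]$ is apolar to $Q$ then subtracting $c L^2$ leaves something apolar to the remaining $n$ points, hence of rank $\le n$ — which is immediate since $I_{\{[L_2],\ldots,[L_{n+1}]\}} \subset (Q - cL^2)^\perp$ once $c$ is chosen so that the coefficient of $L^d$ in the decomposition is cancelled. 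A secondary technical point is simply verifying the generators of $Q^\perp$, but this is the routine calculation $\partial^2 Q/\partial x_i \partial x_j = 2\delta_{ij}$.
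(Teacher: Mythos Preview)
Your proposal is correct and follows essentially the same route as the paper: both arguments reduce the question to whether the symmetric matrix of $Q-cL^2$, namely $I-c\,aa^\top$, can be made singular for some $c$, and both conclude via an eigenvalue/determinant computation that this happens precisely when $\sum a_i^2\neq 0$. Your version is in fact slightly more streamlined---you read off the eigenvalues of the rank-one update $I-c\,aa^\top$ directly, whereas the paper first passes to the eigenvalues of $A_{L^2}=aa^\top$ and then characterizes when they are all zero---and you are more explicit than the paper about why ``$Q-cL^2$ has rank $\le n$'' is equivalent to ``$[L]$ extends to a \emph{reduced} apolar set of $n+1$ points''. The digression on generators of $Q^\perp$ is correct but unnecessary for the argument.
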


\begin{proof}
A point $P=[a_0:\ldots:a_n]$ is a forbidden point for $Q$
if and only if
\[\rk(Q-\lambda L_P^2)=n+1, \mbox{ for all }\lambda\in\mathbb{C}\]
where $L_P=\sum_0^n a_ix_i$. Thus, $P$ is a forbidden point for $Q$ if and only if the symmetric
matrix corresponding to the quadratic form $Q-\lambda L_P^2$ has non-zero determinant for all $\lambda\in\mathbb{C}$. Thus, $P$ is a forbidden point if and only if the symmetric
matrix $A_{L^2}$ corresponding to $L^2$ only have zero eigenvalues (note that, over $\mathbb{C}$, $A_{L^2}$ is not necessarily similar to a diagonal matrix).

We now prove that $A_{L^2}$ only have zero eigenvalues if and only if $\sum_0^n a_i^2=0$.

Assume that zero is the only eigenvalue of $A_{L^2}$. Let $\mathbf{a}=(\begin{array}{ccc}
a_0 & \ldots & a_n
\end{array})$ and  note that
\[\mathbf{a} A_{L^2} = (a_0^2+\ldots+a_n^2)\mathbf{a},
\]
thus $\sum_0^n a_i^2=0$ since it is an eigenvalue.

Now assume that $\sum_0^n a_i^2=0$.
Note that
\[ A_{L^2} ^2 = \left(\sum_0^n a_i^2\right) A_{L^2}. \]
thus $A_{L^2}^2=0$ and hence zero is the only eigenvalue.

 Hence, $P$ is a forbidden point if and only if $\sum_0^n a_i^2=0$ and the proof is now completed.
\end{proof}

\begin{corollary}\label{Quadrics Corollary}

Let $Q(x_0, \ldots, x_n) \in S_2$ be a rank $n+1$ quadric and let $B$ be an $(n+1)\times (n+1)$ matrix such that the change of variables
$$ (x_0, \ldots, x_n) =  (y_0, \ldots, y_n) B$$
gives $Q=y_0^2+\ldots+y_n^2$. Then, $\caF_Q=V(\check Q)\subset\PP^n$, where $\check Q(X_0,\ldots,X_n)$ is the quadratic form
\[(X_0 \ldots X_n) B^t B (X_0 \ldots X_n)^t.\]
\end{corollary}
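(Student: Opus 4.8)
The plan is to deduce the statement from Proposition \ref{Quadrics Proposition} by transporting everything along the given linear change of variables. First I would record that, since $Q$ has rank $n+1$, the matrix $B$ is invertible, so the substitution $x_j=\sum_i B_{ij}y_i$ defines a graded algebra isomorphism $\varphi\colon\CC[x_0,\dots,x_n]\to\CC[y_0,\dots,y_n]$ with $\varphi(Q)=y_0^2+\dots+y_n^2$. Because $\varphi$ is linear on degree-one pieces and multiplicative, it carries a Waring decomposition $Q=\sum c_iL_i^2$ to a Waring decomposition $\varphi(Q)=\sum c_i\varphi(L_i)^2$ with the same number of terms, and, being bijective, it preserves Waring rank; hence $\varphi$ induces a bijection on $\PP(S_1)$ taking $\caW_Q$ onto $\caW_{\varphi(Q)}$ and $\caF_Q$ onto $\caF_{\varphi(Q)}$.

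Next I would compute how $\varphi$ acts on the point–linear form dictionary. For $P=[a_0:\dots:a_n]$ with column vector $\mathbf a=(a_0,\dots,a_n)^t$ and $L_P=a_0x_0+\dots+a_nx_n$, the substitution gives
\[
\varphi(L_P)=\sum_j a_j\sum_i B_{ij}y_i=\sum_i (B\mathbf a)_i\,y_i,
\]
so $\varphi$ sends the point $[\mathbf a]$ in the $x$-space to the point $[B\mathbf a]$ in the $y$-space. Applying Proposition \ref{Quadrics Proposition} to $\varphi(Q)=y_0^2+\dots+y_n^2$, the point $[B\mathbf a]$ is forbidden for $\varphi(Q)$ exactly when $\sum_i (B\mathbf a)_i^2=0$, i.e. $(B\mathbf a)^t(B\mathbf a)=\mathbf a^tB^tB\,\mathbf a=0$. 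Combining this with the first paragraph, $P\in\caF_Q$ if and only if $\mathbf a^tB^tB\,\mathbf a=0$, which is precisely the vanishing at $P$ of the form $\check Q(X_0,\dots,X_n)=(X_0\ \dots\ X_n)\,B^tB\,(X_0\ \dots\ X_n)^t$; hence $\caF_Q=V(\check Q)$. I would also note that this is unambiguous: if $B$ and $B'$ both work then $B'=OB$ for a complex-orthogonal $O$, so $B'^tB'=B^tB$; equivalently one checks that $B^tB$ equals the inverse of the symmetric matrix of $Q$, which degenerates to the identity when $Q=x_0^2+\dots+x_n^2$, consistently with Proposition \ref{Quadrics Proposition}.

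The only real obstacle is bookkeeping the transposes correctly: with the paper's convention $(x_0,\dots,x_n)=(y_0,\dots,y_n)B$, the coefficient vectors of linear forms transform by $B$ itself (not by $B^{-1}$) when passing from $x$-coordinates to $y$-coordinates, and it is exactly this that makes the forbidden quadric pick up the Gram matrix $B^tB$ rather than its inverse. Everything else is a direct application of Proposition \ref{Quadrics Proposition}.
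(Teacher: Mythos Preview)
Your proof is correct and follows essentially the same approach as the paper: both transport the linear form $L_P$ through the change of variables $(x_0,\dots,x_n)=(y_0,\dots,y_n)B$, observe that its new coefficient vector is $B\mathbf a$ (equivalently, the paper's row-vector form $(a_0,\dots,a_n)B^t$), and then apply Proposition~\ref{Quadrics Proposition} to conclude $P\in\caF_Q$ iff $\mathbf a^tB^tB\,\mathbf a=0$. Your additional remarks---that $B$ is invertible, that $\varphi$ preserves Waring rank and hence forbidden loci, and that $B^tB$ is independent of the choice of diagonalizing $B$---are correct and make the argument a bit more self-contained than the paper's, but the route is the same.
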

\begin{proof}

Let $P=[a_0:\ldots:a_n]$ and  $L_P=\sum_0^n a_ix_i = (a_0, \ldots , a_n) (x_0, \ldots , x_n)^t$.

By the the linear change of variables $ (x_0, \ldots, x_n)=  (y_0, \ldots, y_n) B$, we get
$$ L_P =  (a_0, \ldots , a_n) B^t (y_0, \ldots, y_n)^t .
$$
Let  $ (a_0, \ldots , a_n) B^t  =  (b_0, \ldots , b_n) $, so that
$ L_P =   b_0y_0 + \cdots + b_ny_n $.
Using Proposition \ref{Quadrics Proposition} we know that a point
  $[b_0:\ldots:b_n]\in\caF_{Q(y_0,\dots,y_n)}$ if and only if
 $\sum b_i^2=0$. That is,
 $$(a_0, \ldots , a_n) B^t B (a_0, \ldots , a_n)^t =0.
 $$
 and the result follows.
\end{proof}

\subsection{Monomials}\label{Monomials Section}

In this section, we consider monomials $x_0^{d_0}\ldots x_n^{d_n}\in\CC[x_0,\ldots,x_n]$. We always assume that the exponents are increasingly ordered, that is $d_0 \leq …\leq d_n$.
In \cite{CCG}, the authors proved an explicit formula for the Waring rank of monomials, i.e.
$$\rk(x_0^{d_0}\ldots x_n^{d_n})=\frac{1}{d_0+1}\prod_{i=0}^n (d_i+1).$$

We also know from \cite{BBT} that minimal sets of apolar points to monomials are complete intersections, namely they are given by the intersection of $n$ hypersurfaces
in $\PP^n$ of degrees $d_1+1,\ldots,d_n+1$ intersecting properly. Moreover, it is known that, using our new terminology, the points lying on the hyperplane $\{X_i = 0\}$, for $i = 0,\ldots,m$, where $m = \max\{i ~|~ d_i = d_0\}$ are forbidden. This is implicitly proved during the proof of \cite[Proposition 3.1]{CCG} and can also be found in \cite[Corollary 19]{BBT}. Here, we prove that these are actually the only forbidden points of monomials.

\begin{theorem}\label{monomials Thm}
If $M=x_0^{d_0}\cdots x_n^{d_n}\in S$ , then
 $$\caF_M=V(X_0\cdots X_m) \subset \PP^n,$$
 where $m = \max\{i ~|~ d_i = d_0\}$.
\end{theorem}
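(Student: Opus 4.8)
We must show two things: first, that every point $P \in V(X_0 \cdots X_m)$ is forbidden for $M$, i.e.\ $P \in \caF_M$; second, that every point $P \notin V(X_0 \cdots X_m)$ lies in $\caW_M$, i.e.\ belongs to some minimal apolar set of $\rk(M) = \frac{1}{d_0+1}\prod_{i=0}^n(d_i+1)$ reduced points. The first inclusion $V(X_0\cdots X_m) \subseteq \caF_M$ is already known (it is cited in the excerpt from \cite{CCG,BBT}), so the entire burden of the proof is the reverse inclusion: given $P = [a_0 : \cdots : a_n]$ with $a_0 \cdots a_m \neq 0$, I must exhibit a complete intersection $\XX$ of hypersurfaces of degrees $d_1+1, \ldots, d_n+1$, contained in $M^\perp = (X_0^{d_0+1}, \ldots, X_n^{d_n+1})$, intersecting properly, and passing through $P$. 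By the Apolarity Lemma (Lemma \ref{Apolarity Lemma}) together with the complete-intersection structure theorem of \cite{BBT}, such an $\XX$ is exactly a minimal apolar set containing $P$, which is what we need.

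\textbf{Constructing the complete intersection through $P$.} The natural minimal apolar sets to $M$ come from choosing, for each $i = 1, \ldots, n$, a form $g_i \in (M^\perp)_{d_i+1}$; since $(M^\perp)_{d_i+1}$ is spanned by $X_j^{d_j+1}$ for those $j$ with $d_j + 1 \le d_i+1$ together with all monomials not divisible by any $X_k^{d_k+1}$, there is considerable freedom. The idea is to pick $g_i$ of the form $X_i^{d_i+1} - c_i \cdot(\text{monomial in } X_0,\ldots,X_{i-1})$ or, more robustly, a ``rank-one–type'' adjustment: following the classical construction for monomials, take $g_i = X_i^{d_i+1} - \lambda_i X_0^{d_i - d_0} X_0^{d_0+1}/X_0^{\,\cdot}$—rather, I would look for $g_i$ cutting out, together with the others, a set of points all of whose coordinates in positions $0,\ldots,m$ are nonzero, and arrange the free parameters so that $P$ is one of them. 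Concretely: since $a_0 \neq 0$ we may normalize $a_0 = 1$; then I want each $g_i$ to vanish at $P$, i.e.\ $a_i^{d_i+1} = (\text{lower-degree expression in } a_0,\ldots)$, which is a single linear condition on the coefficients of $g_i$ and is satisfiable because $g_i$ is allowed to involve monomials like $X_j^{\,e}\cdots$ not killed in $M^\perp$. The key point is that because $a_0, \ldots, a_m$ are all nonzero, we never need to ``hit'' a forbidden value, and the remaining coordinates $a_{m+1}, \ldots, a_n$ (where $d_i > d_0$) impose no obstruction.

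\textbf{Checking the complete intersection is reduced and proper.} Having built $g_1, \ldots, g_n$ vanishing at $P$ and lying in $M^\perp$, I must verify that $V(g_1, \ldots, g_n)$ is a proper intersection — $n$ hypersurfaces in $\PP^n$ meeting in finitely many points — and that those points are reduced (the scheme is $\mathrm{Spec}$ of a reduced ring). Properness follows by a dimension count / Bézout argument once one checks the $g_i$ have no common component, and reducedness of a complete intersection of the expected degree is automatic away from the singular locus; here one shows the specific $g_i$ chosen give a radical ideal, typically by exhibiting the points explicitly (they will be of the form $[1 : \zeta_1 : \cdots : \zeta_n]$ with the $\zeta_i$ running over roots of unity times the fixed data, up to the one branch forced to pass through $P$) and counting that there are exactly $\prod_{i=1}^n (d_i+1) = (d_0+1)\rk(M)$ of them with multiplicity one — wait, one must be careful: the complete intersection has degree $\prod_{i=1}^n(d_i+1)$, which is $(d_0+1)\cdot\rk(M)$, so this is \emph{not} itself the apolar set; rather \cite{BBT} tells us the minimal apolar sets are the degree-$\rk(M)$ complete intersections cut by forms of degrees $d_1+1, \ldots, d_n+1$ — I will use their theorem in the precise form stated, matching the degree $\rk(M) = \frac{1}{d_0+1}\prod(d_i+1)$, and the construction must be tuned so the resulting scheme has exactly that length.

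\textbf{Main obstacle.} The hard part is the explicit construction of the $g_i$: one needs enough freedom in $(M^\perp)_{d_i+1}$ to force passage through an \emph{arbitrary} $P$ off the forbidden locus while simultaneously keeping the intersection proper, reduced, and of the correct length $\rk(M)$. When several exponents coincide ($d_0 = \cdots = d_m$), the forms of degree $d_0+1$ available are more constrained and the interplay between the conditions ``$g_i(P) = 0$'' and ``the $g_i$ form a regular sequence of the right colength'' is the delicate point; I expect to handle the general case by induction on $n$ or on $\sum d_i$, peeling off the top variable $x_n$ and using Proposition \ref{prop:essential_variables}-style reductions, or by a direct parametrized family argument showing the map (choice of parameters) $\mapsto$ (one of the apolar points) is dominant onto $\PP^n \setminus V(X_0\cdots X_m)$.
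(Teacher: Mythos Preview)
Your strategy is the right one, but you have tied yourself in knots over a miscount. You write that the complete intersection of $n$ hypersurfaces of degrees $d_1+1,\ldots,d_n+1$ has degree $\prod_{i=1}^n(d_i+1)=(d_0+1)\cdot\rk(M)$; in fact
\[
\prod_{i=1}^n(d_i+1)\;=\;\frac{1}{d_0+1}\prod_{i=0}^n(d_i+1)\;=\;\rk(M),
\]
so the complete intersection already has the correct length. There is no ``tuning'' to be done, and the ``main obstacle'' you flag simply is not there.

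Once this is corrected, the construction the paper gives is far more concrete than your sketch. Normalise $p_0=1$ and, for $i=1,\ldots,n$, set
\[
H_i=\begin{cases}
X_i^{d_i+1}-p_i^{\,d_i+1}X_0^{d_i+1} & \text{if } p_i\neq 0,\\
X_i^{d_i+1}-X_iX_0^{d_i} & \text{if } p_i=0.
\end{cases}
\]
Both monomials in each $H_i$ lie in $M^\perp=(X_0^{d_0+1},\ldots,X_n^{d_n+1})$: indeed $d_i\ge d_0$ for all $i$, and the second branch only occurs when $i>m$ (since $p_0,\ldots,p_m\neq 0$ by hypothesis), whence $d_i\ge d_0+1$ and $X_0^{d_i}\in(X_0^{d_0+1})$. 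Each $H_i$ visibly factors as a product of $d_i+1$ distinct hyperplanes through $[1:0:\cdots:0]$-type patterns, and the common zero locus is the explicit set of $\prod_{i=1}^n(d_i+1)=\rk(M)$ reduced points $[1:q_1:\cdots:q_n]$ with $q_i$ ranging over the $(d_i+1)$-st roots of $p_i^{\,d_i+1}$ (or over $0$ together with the $d_i$-th roots of unity, when $p_i=0$). The point $P$ is among them by construction, so $P\in\caW_M$.

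Your worries about coinciding exponents $d_0=\cdots=d_m$, about regular sequences, and about needing induction or a dominance argument are all unnecessary: the hypersurfaces above are unions of hyperplanes and the intersection points can be listed by hand. The moral is that the freedom you were searching for in $(M^\perp)_{d_i+1}$ is already provided by the single monomial $X_0^{d_i+1}$ (respectively $X_iX_0^{d_i}$), and nothing subtler is required.
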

\begin{proof}
The perp ideal of $M$ is $M^{\perp} = (X_0^{d_0+1},\ldots,X_n^{d_n+1})$.
  Consider any point $P=[p_0:\ldots:p_n]\notin V(X_0\cdots X_m)$, we may assume $p_0=1$ and we will prove that $P\in\caW_M$, that is $P\not\in\caF_M$. We construct the following hypersurfaces in $\PP^n$
  $$
  H_i=
  \begin{cases}
   X_i^{d_i+1}-p_i^{d_i+1}X_0^{d_i+1} & \text{ if }p_i\neq 0; \\ \\
   X_i^{d_i+1}-X_iX_0^{d_i} & \text{ if }p_i=0. \\
  \end{cases}
  $$
  Note that, for any
  $i=1,\ldots,n$, the hypersurface $H_i=0$ is the union of $d_i+1$ hyperplanes.

  The ideal $I=(H_1,\ldots,H_n)$ is contained in $M^{\perp}$ and $V(I)$ is the set of reduced points $[1:q_1:\ldots:q_n]$ where
  $$
   q_i \in
   \begin{cases}
    \{\xi_i^{j}p_i ~|~ j=0,\ldots,d_i\}, & \text{ if }p_i\neq 0,\text{ where }\xi_i^{d_i+1}=1; \\ \\

    \{\xi_i^{j} ~|~ j=0,\ldots,d_i-1\}\cup\{0\}, &\text{ if }p_i= 0,\text{ where }\xi_i^{d_i}=1. \\
   \end{cases}
  $$
  Thus, we have a set of $\rk(M)$ distinct points apolar to $M$ and containing the point $P$; hence, $P\in\caW_M$ and $V(X_0\cdots X_m) \supset \caF_M$.

To conclude the proof we need to prove that $$V(X_0\cdots X_m) \subset \caF_M$$ and this readily follows by \cite[Remark 3.3]{CCG} or by \cite[Corollary 19]{BBT}.
\end{proof}

\begin{remark}
 In the case $d_0\geq 2$, the second part of the proof can be explained as a direct consequence of the formula for the rank of monomials. Indeed, in the same notations
 as Theorem \ref{monomials Thm}, for any $i=1,\ldots,m$, we have that $\rk(M)=\rk(\partial_{x_i} \circ M)$. Therefore, given any minimal Waring decomposition of $M=\sum_{j=1}^r L_j^d$,
 by differentiating both sides, we must have $\partial_{x_i} \circ L_j \neq 0$ for all $i=1,\ldots,m$. Hence $[L_j] \notin V(X_0\cdots X_m)$.
\end{remark}

\subsection{Binary forms}

In this section we deal with the case $n=1$, that is the case of forms in two variables. The knowledge on the Waring rank of binary forms goes back to J.J. Sylvester
\cite{Syl}. It is known that, if $F\in \CC[x,y]_d$, then $F^{\perp} = (g_1,g_2)$ and $\deg(g_1)+\deg(g_2)=d+2$. Moreover,
if we assume $d_1=\deg(g_1)\leq d_2=\deg(g_2)$, then $\rk(F)=d_1$ if $g_1$ is square free and $\rk(F)=d_2$ otherwise. See \cite{CS} for more about the rank of binary
forms.

\begin{theorem}\label{binaryTHM}
Let $F$ be a degree $d$ binary form and let $g\in F^\perp$ be an element of minimal degree. Then,
\begin{enumerate}
\item if $\rk(F)<\lceil{d+1 \over 2}\rceil$, then $\caW_F=V(g)$;
\item if $\rk(F)>\lceil{d+1 \over 2}\rceil$, then $\caF_F=V(g)$;
\item if $\rk(F)=\lceil{d+1 \over 2}\rceil$ and $d$ is even, then $\caF_F$ is finite and not empty; \\
 if $\rk(F)=\lceil{d+1 \over 2}\rceil$ and $d$ is odd, then $\caW_F=V(g)$.
\end{enumerate}
\end{theorem}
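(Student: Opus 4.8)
The plan is to use the Sylvester–style description of $F^\perp = (g_1, g_2)$ together with the Apolarity Lemma. Write $d_1 = \deg g_1 \leq d_2 = \deg g_2$, so $d_1 + d_2 = d+2$. A set of reduced points $\XX \subset \PP^1$ of size $s$ is apolar to $F$ exactly when $I_\XX = (h)$ for a squarefree $h \in F^\perp$ of degree $s$; and $F^\perp_s$ is nonzero precisely when $s \geq d_1$, in which case $F^\perp_s = g_1 \cdot T_{s-d_1}$ has dimension $s - d_1 + 1$. So the key translation is: a squarefree $h$ of degree $s$ is apolar to $F$ iff $h = g_1 q$ for some $q \in T_{s - d_1}$ with $g_1 q$ squarefree, and $\caW_F$ is the set of linear forms dividing some squarefree element of $F^\perp_{\rk(F)}$.

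First I would dispose of case (1), $\rk(F) = d_1 < \lceil (d+1)/2 \rceil$ (so in particular $d_1 < d_2$, forcing $g := g_1$ to be squarefree and unique up to scalar as the minimal–degree element). Here any apolar set of size $\rk(F) = d_1$ has ideal generated by a squarefree degree-$d_1$ element of $F^\perp$, but $F^\perp_{d_1}$ is one-dimensional, spanned by $g$; hence every minimal apolar set is exactly $V(g)$, giving $\caW_F = V(g)$ (and $V(g)$ is indeed apolar since $g$ is squarefree). Next, case (2), $\rk(F) = d_2 > \lceil (d+1)/2 \rceil$: now $d_2 > d_1$, so $F^\perp_{d_2} = g_1 \cdot T_{d_2 - d_1}$ has dimension $d_2 - d_1 + 1 \geq 2$. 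A point $[L] = V(\ell)$ is in $\caW_F$ iff $\ell$ divides some squarefree $g_1 q \in F^\perp_{d_2}$. Since $g_1$ is \emph{not} squarefree (as $\rk(F) = d_2 \neq d_1$), write $g_1 = \ell_0 \cdot (\text{stuff})$ with a repeated factor $\ell_0$; I claim $[L] \in \caF_F$ iff $\ell \mid g_1$ up to... — more precisely, $V(g) = V(g_1)$ should be exactly the forbidden locus. The inclusion $V(g_1) \subseteq \caF_F$: if $\ell \mid g_1$ then for any $q$, $\ell^2$ or a repeated factor persists in $g_1 q$, so no squarefree apolar set of size $d_2$ contains $[L]$; one must check the repeated factor of $g_1$ cannot be "cancelled," which it cannot since $q$ only multiplies. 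Conversely if $\ell \nmid g_1$, a dimension count on the linear system $g_1 \cdot T_{d_2 - d_1}$ of degree-$d_2$ divisors shows one can choose $q$ with $\ell \mid g_1 q$ and $g_1 q$ squarefree away from... wait — $g_1 q$ still contains the repeated factors of $g_1$, so it is \emph{never} squarefree when $g_1$ is not squarefree. This means there are \emph{no} squarefree apolar sets of size $d_2$ built from $g_1$; instead the minimal apolar sets of size $d_2 = \rk(F)$ must come from squarefree elements of $F^\perp_{d_2}$ that are \emph{not} multiples of $g_1$ — impossible, since $F^\perp_{d_2} = g_1 T_{d_2-d_1}$. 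The resolution: $\rk(F) = d_2$ forces apolar sets of size $d_2$, and these exist (e.g. a general element of $F^\perp_{d_2}$ is squarefree when $d_2 - d_1 \geq 1$ only if $g_1$ is squarefree — contradiction). Hence I must instead argue that when $g_1$ is not squarefree, $F^\perp$ still contains squarefree elements in degree $d_2$: take $g_1 q + c\, g_2$-type combinations — but $\deg g_2 = d_2$, so $F^\perp_{d_2} = \langle g_1 T_{d_2-d_1}, g_2 \rangle$, which \emph{does} contain $g_2$ and generic combinations, and a general such is squarefree. So $[L] \in \caW_F$ iff $\ell$ divides some squarefree element of $\langle g_1 T_{d_2-d_1}, g_2\rangle$, and $[L] \in \caF_F$ iff every element of this space divisible by $\ell$ is non-reduced; I then show this happens exactly on $V(g)$ where $g = g_1$, by a base-locus / Bézout argument on the pencil-or-larger linear system.

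For case (3), $\rk(F) = \lceil (d+1)/2\rceil =: r$, note $d_1 \leq r \leq d_2$ and $d_1 + d_2 = d+2$. If $d$ is odd, $r = (d+1)/2$ and $d_1 + d_2 = d+2 = 2r+1$, so $\{d_1,d_2\} = \{r, r+1\}$; then $\rk(F) = r = d_1$ forces $g_1$ squarefree and $d_1 = r < d_2$, so $g := g_1$ is the unique minimal-degree element and $F^\perp_r = \langle g_1 \rangle$ is one-dimensional — exactly as in case (1), giving $\caW_F = V(g)$. If $d$ is even, $r = (d+2)/2$ and $d_1 + d_2 = 2r$, so either $d_1 = d_2 = r$ or $d_1 < r < d_2$; since $\rk(F) = r$ we get $d_1 = r$ (otherwise $\rk = d_1 < r$ would need $g_1$ squarefree, contradicting $\rk = d_2$; and $\rk = d_2 > r$ is excluded). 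So $d_1 = r$ and either $d_2 = r$ (then $F^\perp_r$ is $2$-dimensional) or $d_2 > r$ (then $F^\perp_r = \langle g_1\rangle$ is $1$-dimensional but $g_1$ is \emph{not} squarefree since $\rk \neq d_1$ would be forced — wait, $\rk = d_1 = r$ here, so $g_1$ \emph{is} squarefree, contradiction with needing... ). The clean statement: when $d$ is even and $\rk(F) = r$, one shows $\caF_F$ is a nonempty finite set by exhibiting at least one squarefree apolar set of size $r$ (so $\caF_F \neq \PP^1$) and showing not every point works — e.g. the repeated root of $g_1 g_2 / (\text{squarefree part})$, or more carefully a root of $\gcd$ considerations — forcing $\caF_F$ to be the finite set $V(g_1 g_2) \setminus V(\text{squarefree part of } g_1 g_2)$ or similar. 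The honest approach is: $[L] = V(\ell) \in \caW_F$ iff $\ell$ divides a squarefree element of the $2$-dimensional (generically) space $F^\perp_r$; this is a pencil of binary $r$-forms, its squarefree members form a Zariski-open dense set, and $[L]$ is forbidden iff \emph{every} member through $[L]$ is non-reduced, which (by the structure of the pencil's base points and discriminant) happens on a finite, possibly empty-looking set — but one shows it is nonempty by a discriminant/parity argument tied to $d$ even.

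\medskip

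\textbf{Main obstacle.} The genuine difficulty is case (3) with $d$ even, and more subtly the "converse" direction in cases (1) and (3)-odd: proving $V(g) \subseteq \caW_F$, i.e. that the single element $g$ (up to scalar) spanning $F^\perp_{\rk(F)}$ is itself squarefree, so that $V(g)$ is a genuine apolar set — this is exactly Sylvester's criterion and must be invoked correctly. The hardest step is showing, in case (3) even, both that $\caF_F \neq \emptyset$ (some point is forbidden — requiring a discriminant computation on the pencil $F^\perp_r$ showing it is not a pencil of all-squarefree forms, which uses $d$ even in an essential way via a parity obstruction on the number of base points) and that $\caF_F$ is finite (the non-reduced members of a pencil of binary forms are cut out by the vanishing of the discriminant, a single polynomial in the pencil parameter, hence finitely many; but one must rule out the pencil being entirely non-reduced, again the parity point). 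I expect to need the explicit description from \cite{CS} of ranks of binary forms and a careful case split on whether $F^\perp$ is generated in a single degree.
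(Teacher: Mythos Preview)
Your framework is right, but there are two genuine gaps.

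\textbf{Case (2).} You never invoke the key structural fact that $\gcd(g_1,g_2)=1$ (this holds because $F^\perp$ is a codimension-two Gorenstein ideal, hence a complete intersection). This coprimality is exactly what makes the inclusion $V(g_1)\subseteq\caF_F$ work: if $\ell\mid g_1$, then any element $g_1 q + c g_2\in F^\perp_{d_2}$ divisible by $\ell$ must have $c=0$ (since $\ell\nmid g_2$), hence equals $g_1 q$, which is never squarefree. Your write-up circles this point without landing on it. For the reverse inclusion, ``a base-locus / B\'ezout argument'' is not a proof. The paper's approach is cleaner than what you sketch: for $\ell\nmid g_1$, pass to the colon ideal $F^\perp:(\ell)=(\ell\circ F)^\perp=(h_1,h_2)$ with $h_1=g_1$ and $\deg h_2=d_2-1$; then $(F^\perp:(\ell))_{d_2-1}$ is base-point-free (again by coprimality), so a generic element $h$ is squarefree with $\ell\nmid h$, and $\ell h$ gives the required squarefree apolar form of degree $d_2$.

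\textbf{Case (3), $d$ even.} You get tangled in the case split, but the situation is forced: $d_1+d_2=d+2=2r$ with $d_1\le d_2$, and $\rk(F)\in\{d_1,d_2\}$ equals $r$ only if $d_1=d_2=r$. So $(F^\perp)_r$ is a pencil; for each $P\in\PP^1$ there is a \emph{unique} (up to scalar) $g_P\in(F^\perp)_r$ vanishing at $P$, and $P\in\caF_F$ iff $g_P$ is not squarefree. Finiteness is immediate: the pencil is a line in $\PP(T_r)$ not contained in the discriminant hypersurface (some member is squarefree since $\rk(F)=r$), so it meets the discriminant in finitely many points. Non-emptiness needs no ``parity obstruction'': a line in projective space meets every hypersurface, so the pencil contains at least one non-squarefree member $g$, and any $P\in V(g)$ then lies in $\caF_F$. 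The role of $d$ even is only to force $d_1=d_2$ so that the pencil exists; it is not used again.
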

\begin{proof}
(1) It is is enough to note that the decomposition of $F$ is unique and the unique apolar set of points is $V(g)$.

(2) As mentioned above, in this case we have that $F^\perp=(g_1,g_2)$, where $d_1=\deg(g_1)<\deg(g_2)=d_2$, $d_1+d_2=d+2$, $g_1$ is not square free, and $\rk(F)=d_2$. In particular, $g_1$ is an element of minimal degree in the apolar ideal.
We first show that $\caF_F\supseteq V(g_1)$. Let $P=V(l)\in V(g_1)$ for some linear form $l$, that is $l$ divides $g_1$. We want to show that there is no apolar set of points to $F$ containing $P$. Thus, it is enough to show that there is no square free element of degree $d_2$ in $F^\perp$ divisible by $l$. Since $g_1$ and $g_2$ have no common factors, and $l$ divides $g_1$, it follows that the only elements of degree $d_2$ in $F^\perp$ divisible by $l$ are multiples of $g_1$, thus they are not square free. Hence, $P\in\caF_F$.
We now prove that $\caF_F\subseteq V(g_1)$ by showing that, if $P=V(l)\not\in V(g_1)$, then $P\in\caW_F$. Note that $l$ does not divide $g_1$ and consider
\[F^\perp:(l)=(l\circ F)^\perp=(h_1,h_2)\]
where $c_1=\deg(h_1),c_2=\deg(h_2)$ and $c_1+c_2=d+1$. Since $h_1$ is a minimal degree element in $F^\perp$ and $l$ does not divide $g_1$, we have $h_1=g_1$ and
$c_2=d_2-1$. Thus $\rk(F)=\rk(l \circ F)+1$. Since $(F^\perp:(l))_{d_2-1}$ is base point free, we can choose $h\in F^\perp:(l)$ to be a degree $d_2-1$ square free element not divisible by $l$. Hence, $P\in V(lh)$ and $V(lh)$ is
a set of $d_2$ points apolar to $F$.

(3) Let $F^\perp=(g_1,g_2)$, $d_1=\deg(g_1)$, and $d_2=\deg(g_2)$. If $d$ is odd, then $d_2=d_1+1$ and $\rk(F)=d_1$; thus $g_1$ is a square free element of minimal
degree and $F$ has a unique apolar set of $d_1$ distinct points, namely $V(g_1)$. This proves the $d$ odd case. If $d$ is even, then $d_1=d_2=\rk(F)$ and $F$ has
infinitely many apolar sets of $\rk(F)$ distinct points. However, for each $P\in\PP^1$ there is a unique set of $\rk(F)$ points (maybe not distinct) apolar to $F$ and
containing $P$. That is, there is a unique element (up to scalar) $g\in (F^\perp)_{d_1}$ vanishing at $P$. Thus, $P\in\caF_F$ if and only if $g$ is not square free.
There are finitely many not square free elements in $(F^\perp)_{d_1}$ since they correspond to the intersection of the line given by $(F^\perp)_{d_1}$ in $\PP(T_{d_1})$
with the hypersurface given by the discriminant; note that the line is not contained in the hypersurface since $(F^\perp)_{d_1}$ contains square free elements.
\end{proof}

\begin{remark}
We can provide a geometric interpretation of Theorem \ref{binaryTHM} for $F$ a degree $d$ binary form of rank $d$, the maximal possible.
In this case, after a change of variables, we can assume $F=xy^{d-1}$. To see geometrically that $[0:1]\in\caF_F$, we consider the point $[y^d]$ on the degree $d$
rational normal curve of $\PP^d$. Note that $[F]$ belongs to the tangent line to the curve in $[y^d]$. Thus, it is easy to see that there does not exist a
hyperplane containing $[F]$ and  $[y^d]$ and cutting the rational normal curve in $d$ distinct points. To prove geometrically that $\caF_F=\{[0:1]\}$ one can
argue using Bertini's theorem. However, for forms of lower rank, we could not find a straightforward geometrical explanation.
\end{remark}

We can improve part (3) of Theorem \ref{binaryTHM} for $d$ even adding a genericity assumption.

\begin{proposition}
Let $d=2h$. If $F\in S_d$ is a generic form of rank $h+1$, then $\caF_F$ is a set of $2h^2$ distinct points.
\end{proposition}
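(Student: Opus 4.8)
We work with $d = 2h$ and $F$ generic of rank $h+1$. By Theorem \ref{binaryTHM}(3), $\caF_F$ consists of those $[P] \in \PP^1$ for which the unique (up to scalar) element $g_P \in (F^\perp)_h$ vanishing at $P$ fails to be square free. The plan is to translate this into an explicit intersection-theoretic count on the line $\ell = \PP((F^\perp)_h) \cong \PP^1$ inside $\PP(T_h) \cong \PP^h$. Since $F$ has rank $h+1$, we have $\dim (F^\perp)_h = 2$, so $\ell$ is genuinely a line. Inside $\PP(T_h)$ sits the discriminant hypersurface $\Delta$ parametrizing non-square-free (i.e. having a repeated root) binary forms of degree $h$; it is well known that $\deg \Delta = 2(h-1) = 2h - 2$. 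The non-square-free elements of $(F^\perp)_h$ correspond to $\ell \cap \Delta$. So the naive expectation is $2h-2$ points, which is \emph{not} $2h^2$ --- so the first order of business is to identify why the count is larger, namely the map $[P] \mapsto [g_P]$ from $\PP^1$ (parametrizing points) to $\ell$ is not the identity but has some positive degree, and forbidden points are the preimages of $\ell \cap \Delta$ under this map together with possible base locus contributions.

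**The key map and its degree.** First I would make precise the incidence correspondence: for $[P] \in \PP^1$, $g_P$ is the unique form in the pencil $(F^\perp)_h$ vanishing at $P$, which gives a morphism $\varphi \colon \PP^1 \to \ell$. Concretely, if $(F^\perp)_h = \langle g_1', g_2' \rangle$ with $g_1', g_2'$ of degree $h$, then $g_P = g_2'(P) g_1' - g_1'(P) g_2'$, so $\varphi$ is given by the pair $(g_2'(P) : g_1'(P))$ — a degree $h$ map $\PP^1 \to \PP^1$. A point $[P]$ is forbidden iff $g_{\varphi(P)}$ is non-square-free, but one must be careful: $[P] \in \caF_F$ iff the chosen form $g_P$ has a repeated root; since $g_P$ always vanishes at $P$, this happens iff either $P$ is itself a double root of $g_P$, or $g_P$ has some \emph{other} repeated root. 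So $\caF_F = \varphi^{-1}(\Delta \cap \ell) \cup \{P : g_P \text{ has a double root at } P\}$. The condition "$g_P$ has a double root at $P$" means $P$ is a ramification-type point, and one checks this is equivalent to $P$ being a root of $\gcd$-type resultant condition; generically this contributes $2(h-1)$ points as well (the ramification of $\varphi$ or a related Wronskian). Adding these: I expect $h \cdot (2h-2)$ from the preimage of the discriminant points (each of the $2h-2$ points of $\ell \cap \Delta$ has $h$ preimages under the degree $h$ map $\varphi$, generically) — but $h(2h-2) = 2h^2 - 2h$, still short of $2h^2$ by $2h$. The remaining $2h$ points should come precisely from the "double root at $P$ itself" locus, which is a degree $2h$ condition (it is the locus where $P$ is a base point of the derived pencil, governed by a resultant/Wronskian of bidegree giving $2h$). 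So the target $2h^2 = (2h^2 - 2h) + 2h$ decomposes as preimages of the discriminant plus the self-double-root locus.

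**Carrying out the genericity and transversality.** With the decomposition in hand, the work is: (i) show that for generic $F$, $\ell$ meets $\Delta$ transversally in exactly $2h-2$ reduced points, none of which is a "bad" point (e.g. corresponding to a form with a root of multiplicity $\geq 3$, or lying in the branch locus of $\varphi$ in a degenerate way); (ii) show $\varphi$ is generically unramified over these points, so each pulls back to $h$ distinct points; (iii) show the self-double-root locus is reduced of size $2h$ and disjoint from the preimages in (ii). Each of these is a standard open-density argument: the set of rank $h+1$ forms $F$ is irreducible (it is the affine cone over a secant variety, or one can use that generic rank $h+1$ forms form a dense constructible set in $S_{2h}$ once one quotients appropriately), so it suffices to exhibit one $F$ for which all of (i)–(iii) hold, or to argue each failure is a proper closed condition. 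I would likely produce an explicit example — e.g. take $F$ whose apolar pencil in degree $h$ is spanned by $X^h - Y^h$ and $X^{h-1}Y$ or some similarly explicit pair — compute $\varphi$, its ramification, and $\ell \cap \Delta$ by hand for small $h$ and then note the computation is "generic."

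**Main obstacle.** The delicate point, and where I would spend the most care, is \textbf{separating the two contributions and ruling out coincidences}: a priori a point of $\ell \cap \Delta$ could have $\varphi$-preimages that overlap with the self-double-root locus, or $\ell$ could be tangent to $\Delta$, or $\varphi$ could ramify over a discriminant point — each of these would drop the count below $2h^2$. Showing that for \emph{generic} $F$ none of these degeneracies occurs requires either a clean dimension count in the parameter space of pencils (the Grassmannian-like space of $2$-dimensional subspaces of $T_h$) together with the observation that "rank $h+1$" is an open condition there, or an explicit witness. I expect the explicit-witness route to be cleaner, since writing down one pencil where $\varphi$ is étale over the $2h-2$ simple discriminant points and the $2h$ self-double-root points are distinct is a finite computation, after which irreducibility of the rank-$(h+1)$ locus upgrades it to the generic statement. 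A secondary subtlety is confirming the degree of the self-double-root locus is exactly $2h$: this is the locus where the polynomial $g_P(x) = g_2'(P)g_1'(x) - g_1'(P)g_2'(x)$ in the variable $x$ has $x = P$ as a double root, i.e. both $g_P(P) = 0$ (automatic) and $\frac{d}{dx}g_P|_{x=P} = 0$; the latter is $g_2'(P)(g_1')'(P) - g_1'(P)(g_2')'(P) = W(g_1', g_2')(P)$, the Wronskian, which for two degree $h$ forms has degree $2h - 2$ — not $2h$. This discrepancy means I have miscounted somewhere and the self-double-root contribution is actually $2h - 2$, forcing a recount: $2h^2 = ?$. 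Reconciling $h(2h-2) + (2h-2) = (h+1)(2h-2) = 2h^2 - 2 \neq 2h^2$ shows the honest bookkeeping still needs adjustment, and pinning down the exact two numbers that sum to $2h^2$ — most likely by a careful Chern-class computation on $\PP^1 \times \PP^1$ or by the explicit example — is the real heart of the proof.
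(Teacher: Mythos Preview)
Your proposal contains a genuine error that cascades through the entire argument: you place the apolar pencil in the wrong degree. Since $d=2h$ and $\rk(F)=h+1=\lceil(d+1)/2\rceil$, we have $F^\perp=(g_1,g_2)$ with $\deg g_1=\deg g_2=h+1$ (their degrees sum to $d+2=2h+2$). Thus $(F^\perp)_h=0$ and the relevant pencil is $(F^\perp)_{h+1}$, a line $\ell$ in $\PP(T_{h+1})\cong\PP^{h+1}$, not in $\PP(T_h)$. With this correction the discriminant hypersurface $\Delta\subset\PP^{h+1}$ has degree $2((h+1)-1)=2h$, not $2h-2$, and your map $\varphi:\PP^1\to\ell$, $P\mapsto[g_P]$, has degree $h+1$, not $h$.

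More importantly, your decomposition of $\caF_F$ into two pieces is spurious. If $g_P$ has a double root at $P$ itself, then $g_P$ is not square free, so $[g_P]\in\ell\cap\Delta$ and $P\in\varphi^{-1}(\ell\cap\Delta)$ already. There is no second contribution to add; $\caF_F=\varphi^{-1}(\ell\cap\Delta)$ on the nose. This is why your bookkeeping never closes. The paper's argument bypasses $\varphi$ entirely and counts directly: generically $\ell$ meets $\Delta$ in $\deg\Delta=2h$ points, each corresponding to a form $B_1^2B_2\cdots B_h$ with exactly one repeated factor (forms with a triple factor or two double factors sit in a codimension-one locus of $\Delta$, avoided by a generic line). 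Each such form has exactly $h$ distinct roots. Finally, since $g_1,g_2$ generate $F^\perp$ they share no common factor, so any two distinct elements of the pencil are coprime; hence the $2h$ zero sets $V(f_i)$ are pairwise disjoint, and $\caF_F=\bigcup_i V(f_i)$ consists of $2h\cdot h=2h^2$ distinct points. The Macaulay duality between Gorenstein artinian quotients and forms is what lets one say that a generic $F$ gives a generic line $\ell$, justifying the transversality.

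Once you fix the degree to $h+1$, your $\varphi$-formulation becomes equivalent to this: the scheme-theoretic fiber $\varphi^{-1}([f])$ is $V(f)$, which for $f=B_1^2B_2\cdots B_h$ has $h$ reduced points plus one embedded point at the double root; set-theoretically each of the $2h$ fibers contributes $h$ points. But the paper's phrasing avoids the detour through $\varphi$ and the attendant worry about ramification, making the distinctness argument (coprimality of pencil members) immediate.
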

\begin{proof}
Let $\Delta\subset\PP^{h+1}$ be the variety of degree $h+1$ binary forms having at least a factor of multiplicity two. Note that forms having higher degree factors,
or more than one repeated factor, form a variety of codimension at least one in $\Delta$. In particular, a generic line $L$ will meet $\Delta$ in $\deg\Delta$
distinct points each point corresponding to a form of the type $B_1^2B_2\ldots B_{h}$ and $B_i$ is not proportional to $B_j$ if $i\neq j$.

Now, recall the well-known Macaulay's duality between artinian Gorenstein algebras $A_F \simeq S/F^{\perp}$ of socle degree $d$ and homogeneous forms of degree $d$. For a generic $F \in S_d$ we have $F^{\perp} = (g_1,g_2)$ where $\deg (g_1) = \deg (g_2) = h+1$. Therefore, we have that a generic form $F$ determines a generic line in $\PP^{h+1}$ and viceversa. The non square free elements of $(F^{\perp})_{h+1}$ corresponds to $L\cap\Delta$ where $L$ is the line given by $(F^{\perp})_{h+1}$.
By genericity, $L\cap\Delta$ consists of exactly $\deg(\Delta)$ points each corresponding to a degree $h+1$ form $f_i$ having exactly one repeated factor
of multiplicity two. Since every two elements in $(F^\perp)_{h+1}$ have no common factors, it follows that
$$\caF_F=\bigcup_i V(f_i)$$
is a set of $h \deg(\Delta)$ distinct points. Since $\deg(\Delta) = 2h$, the result is now proved.
\end{proof}

We can also iterate the use of Theorem \ref{binaryTHM} to construct a Waring decomposition for a given binary form. Let $F\in S_d$ with rank $r \geq \left\lceil\frac{d+1}{2}\right\rceil$, so that the Waring decomposition is not unique, we can think of constructing such a decomposition one addend at the time.

From our result, we know that in this case the forbidden locus is a closed subset $\caF_F=V(g)$ where $g$ is an element in $F^{\perp}$ of minimal degree; hence,
we can pick any point $[L_1]$ in the open set $\PP^1\setminus V(g)$ to start our Waring decomposition of $F$. This means that there exists $\lambda_1 \in \CC$ such that $F_1=F-\lambda_1L_1^d$ has rank one less than the rank of $F$. If the rank of $F_1$ is still larger than $\left\lceil \frac{d+1}{2} \right\rceil$, we can proceed in the same way as before. We may observe
that $\caF_{F_1} = \caF_{F} \cup [L_1]$. Indeed, by Theorem \ref{binaryTHM}, $\caF_{F_1}=V(g_1)$, where $g_1$ is an element of minimal degree of $F_1^{\perp}$.
Since $\rk(F_1)=\rk(F)-1$, we have that $\deg(g_1)=\deg(g)+1$, in particular it has to be $g_1=gL_1^{\vee}   $, where $L_1^{\vee}$ is the linear differential operator
annihilating $L_1$. Hence, we can continue to construct our decomposition for $F$ by taking any point $[L_2]\in\PP^1\setminus V(g_1)$ and a suitable $\lambda_2\in\CC$ such that $F_2=F-\lambda_1L_1^d-\lambda_2L_2^d$ has rank equal to $\rk(F) - 2$.
We can continue this procedure until we get a form $F_i$ with rank equal to $\left\lceil \frac{d+1}{2} \right\rceil$. If $d$ is odd, we can actually do one more step and arrive to a form $F_i$ with rank strictly less than $\left\lceil \frac{d+1}{2} \right\rceil$. In other words, we have proven the following result.

\begin{proposition}
 Let $F$ be a degree $d$ binary form of rank $r\geq \lceil{d+1 \over 2}\rceil$. For any choice of $[L_1],\ldots,[L_s]\notin\caF_F$ where $s=r-\lceil{d+1 \over 2}\rceil$
 there exists a minimal Waring decomposition for $F$ involving $L_1^d,\ldots,L_s^d$. If $d$ is odd, then it is also unique.
\end{proposition}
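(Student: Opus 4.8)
The plan is to construct the decomposition one power at a time, turning the procedure sketched above into a formal induction. We may assume $s\ge 1$, since for $s=0$ we have $r=\lceil\frac{d+1}{2}\rceil$, any minimal decomposition works, and for $d$ odd part~(3) of Theorem~\ref{binaryTHM} already gives identifiability. We may also assume $[L_1],\dots,[L_s]$ pairwise distinct, as the summands of a minimal Waring decomposition are pairwise non-proportional. For a linear form $L$ write $L^\vee\in T_1$ for the linear form vanishing at $[L]$, so $L^\vee\circ L=0$ and hence $L^\vee\circ L^d=0$; let $g\in F^\perp$ be of minimal degree, so $\caF_F=V(g)$ by Theorem~\ref{binaryTHM}(2) (note $r>\lceil\frac{d+1}{2}\rceil$). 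Starting from $F_0:=F$, I would produce $F_0,F_1,\dots,F_s$ and nonzero $\lambda_1,\dots,\lambda_s\in\CC$ with $F_i=F-\sum_{j\le i}\lambda_jL_j^d$ and $\rk(F_i)=r-i$, maintaining for $0\le i\le s-1$ the invariant $\caF_{F_i}=V(g_i)$ with $g_i=g\,L_1^\vee\cdots L_i^\vee$, equivalently $\caF_{F_i}=\caF_F\cup\{[L_1],\dots,[L_i]\}$.

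For the inductive step, suppose $F_i$ has been constructed for some $i\le s-1$, so $\rk(F_i)=r-i>\lceil\frac{d+1}{2}\rceil$ and Theorem~\ref{binaryTHM}(2) gives $\caF_{F_i}=V(g_i)$. Since $[L_{i+1}]\notin\caF_F$ and $[L_{i+1}]\ne[L_1],\dots,[L_i]$, we have $[L_{i+1}]\in\caW_{F_i}$, so by the apolar description of the Waring locus $F_i$ has an apolar set of $r-i$ reduced points containing $[L_{i+1}]$; this yields $F_i=\lambda_{i+1}L_{i+1}^d+(\text{a sum of }r-i-1\text{ powers})$ with $\lambda_{i+1}\ne 0$ (else $\rk(F_i)\le r-i-1$). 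Put $F_{i+1}:=F_i-\lambda_{i+1}L_{i+1}^d$; then $\rk(F_{i+1})\le r-i-1$, and $\ge r-i-1$ by subadditivity, so equality holds. If $i+1\le s-1$, the invariant persists: from
\[
(g_iL_{i+1}^\vee)\circ F_{i+1}=g_i\circ\bigl(L_{i+1}^\vee\circ F_{i+1}\bigr)=g_i\circ\bigl(L_{i+1}^\vee\circ F_i\bigr)=L_{i+1}^\vee\circ\bigl(g_i\circ F_i\bigr)=0
\]
we get $g_iL_{i+1}^\vee\in F_{i+1}^\perp$, a nonzero element of degree $\deg g_i+1=(d+2)-(r-i-1)$; since $\rk(F_{i+1})>\lceil\frac{d+1}{2}\rceil$ the two generators of $F_{i+1}^\perp$ have distinct degrees, so the least-degree component is one-dimensional and $g_{i+1}=g_iL_{i+1}^\vee$ up to scalar, whence $V(g_{i+1})=V(g_i)\cup\{[L_{i+1}]\}$. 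After $s$ steps we reach $F=\sum_{i=1}^s\lambda_iL_i^d+F_s$ with $\rk(F_s)=\lceil\frac{d+1}{2}\rceil$.

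For existence it now suffices to take any minimal Waring decomposition $F_s=\sum_{j}\mu_jM_j^d$ (which has $\lceil\frac{d+1}{2}\rceil$ summands): then $F=\sum_i\lambda_iL_i^d+\sum_j\mu_jM_j^d$ has $r=\rk(F)$ summands, hence is minimal, and as it realises the rank its summands are pairwise non-proportional, so it genuinely involves $L_1^d,\dots,L_s^d$. For the uniqueness claim when $d$ is odd, $F_s$ has rank $\lceil\frac{d+1}{2}\rceil$ and is identifiable by Theorem~\ref{binaryTHM}(3), so its decomposition is forced once $F_s$ is; thus the whole decomposition is determined provided the scalars $\lambda_1,\dots,\lambda_s$ — equivalently the residual form $F-\sum_i\lambda_iL_i^d$ — are uniquely pinned down by the condition $\rk\bigl(F-\sum_i\lambda_iL_i^d\bigr)=\lceil\frac{d+1}{2}\rceil$. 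I would attack this through the ideal quotient $F^\perp:(L_1^\vee\cdots L_s^\vee)=\bigl((L_1^\vee\cdots L_s^\vee)\circ F\bigr)^\perp$, using that $(L_1^\vee\cdots L_s^\vee)\circ F=(L_1^\vee\cdots L_s^\vee)\circ F_s$ and that the minimal-degree generator of $F^\perp$ is not square free while square-freeness is forced after the one extra differentiation available for $d$ odd. Showing the $\lambda_i$ are uniquely determined is the delicate point here, and it is the step I expect to be the main obstacle.
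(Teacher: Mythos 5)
Your existence argument is correct and is essentially the paper's own proof made precise: the paper obtains the proposition exactly from the iterative procedure you formalize, and its key point is the same identity $g_{i+1}=g_iL_{i+1}^\vee$ (obtained, as you do, from $g_iL_{i+1}^\vee\in F_{i+1}^\perp$ plus a degree count in the complete intersection $F_{i+1}^\perp$), which gives $\caF_{F_{i+1}}=\caF_{F_i}\cup\{[L_{i+1}]\}$ and keeps the construction going. Your explicit handling of the distinctness of the $[L_i]$, of $\lambda_{i+1}\neq 0$, and of why no summand of the chosen decomposition of $F_s$ can be proportional to an $L_i$ only fills in details the paper leaves implicit.

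The uniqueness part is where the genuine gap sits, and the step you single out as the main obstacle --- showing the scalars $\lambda_1,\dots,\lambda_s$ are determined --- cannot be carried out, because they are not unique. Take $F=xy^2$, so $d=3$, $r=3$, $s=1$, $\caF_F=\{[y]\}$, and $L_1=x$. For every $\lambda\neq 0$ the square-free quadric $X^2+3\lambda Y^2$ annihilates $xy^2-\lambda x^3$, so $\rk(xy^2-\lambda x^3)=2$ and each $\lambda$ yields a different minimal decomposition of $F$ with a multiple of $x^3$ among its summands; concretely, $6xy^2=(x+y)^3+(x-y)^3-2x^3$ and $12xy^2=(2x+y)^3+(2x-y)^3-16x^3$ are two distinct minimal decompositions through $[x]$. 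So under the literal reading you adopted (the $[L_i]$ given only up to scalar), the uniqueness assertion is false, and no amount of work with $F^\perp:(L_1^\vee\cdots L_s^\vee)$ will pin the $\lambda_i$ down. What the paper's construction actually supports for $d$ odd is the weaker uniqueness you already have in hand: once the summands $\lambda_1L_1^d,\dots,\lambda_sL_s^d$ are fixed, the residual form $F_s$ has rank $\lceil{d+1\over 2}\rceil=(d+1)/2$ and is identifiable by Theorem~\ref{binaryTHM}(3), so the completion is unique. You should state and prove the uniqueness claim in that conditional form (uniqueness of the completion of a fixed partial sum), rather than attempt the stronger statement, which is the one step of your plan that genuinely fails.
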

\subsection{Plane cubics}\label{Plane Cubics Section}

In this section we describe $\caW_F$ (and $\caF_F$) for $n=2$ and $F\in S_3$, that is for plane cubics. For simplicity, we let $S=\CC[x,y,z]$ and
$T=\CC[X,Y,Z]$.

We use the following characterization of plane cubics adapted from the table given in \cite{LT}.

\begin{center}
\begin{tabular}{c | c c | c || c}
 Type & Description & Normal form & Waring rank & Result \\
\hline
 (1) & triple line & $x^3$ & 1 & Theorem \ref{monomials Thm} \\
 (2) & three concurrent lines &  $xy(x+y)$ & $2$ & Theorem \ref{binaryTHM} \\
 (3) & double line + line  & $x^2y$ & $3$ & Theorem \ref{monomials Thm} \\
 (4) & smooth & $x^3+y^3+z^3$ & $3$ & Theorem \ref{waringdegTHM} \\
 (5) & three non-concurrent lines & $xyz$ & $4$ & Theorem \ref{monomials Thm} \\
 (6) & line + conic (meeting transversally) & $x(yz+x^2)$ & $4$ & Theorem \ref{Fam 6} \\
 (7) & nodal & $xyz-(y+z)^3$ & $4$ & Theorem \ref{family(7)THM} \\
 (8) & cusp & $x^3-y^2z$ & $4$ & Theorem \ref{waringdeggeneralcuspTHM} \\
 (9) & general smooth ($a^3\neq -27,0,6^3$) &$x^3+y^3+z^3+axyz$ & $4$ & Theorem \ref{smoothcubicthm} \\
 (10) & line + tangent conic &$x(xy+z^2)$ & $5$ & Theorem \ref{rank5 cubic} \\
 \hline
&\multicolumn{3}{l}{{\bf Note.} In case (9), $a^3\neq 0,6^3$ so that the rank is actually $4$ and } \\
&\multicolumn{3}{l}{$a^3\neq -27$ for smoothness of the Hessian canonical form \cite{Dol}.}
\end{tabular}
\end{center}

\begin{remark}
We have already analyzed several cases:

 (1),(3),(5): they are monomials and it follows from Theorem \ref{monomials Thm};

 (2): these forms can be seen as forms in two variables, hence it follows from Theorem \ref{binaryTHM}(3);

 (4): smooth plane cubics can be seen as sums of pairwise coprime monomials with high exponents which are analyzed separately
 in the next section, see Theorem \ref{waringdegTHM};

 (8): plane cubic cusps can be seen as the kind of sums of pairwise coprime monomials that we have analyzed in Theorem
 \ref{waringdeggeneralcuspTHM}.
\end{remark}

We now study plane cubics of rank four. First, we need the following lemma.

\begin{lemma}\label{cusp LEMMA}
Let $F$ be a plane cubic of rank four and let $\mathbb{X}$ be a set of four distinct points apolar to $F$. If $\mathbb{X}$ has exactly three collinear points,
then $F$ is a cusp, that is $F$ is of type (8).
\end{lemma}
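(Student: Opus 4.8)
The plan is to exploit the Apolarity Lemma together with the classification of plane cubics by their apolar ideals. Suppose $\mathbb{X} = \{P_1,P_2,P_3,P_4\}$ is a set of four distinct points apolar to $F$ with $P_1,P_2,P_3$ collinear on a line $\ell$ and $P_4 \notin \ell$. Let $L \in T_1$ be the linear form defining $\ell$, so that $L$ vanishes at $P_1,P_2,P_3$. Since $P_1,P_2,P_3$ lie on $\ell$, their ideal $I_{\{P_1,P_2,P_3\}}$ contains $L$ together with a cubic $C$ (a product of three lines through distinct points of $\ell$, or more precisely $L$ and one more curve), and we have $I_{\mathbb{X}} = I_{\{P_1,P_2,P_3\}} \cap I_{P_4}$. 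First I would use $I_{\mathbb{X}} \subset F^\perp$ to extract structural information about $F^\perp$: in particular, the Hilbert function of $T/F^\perp$ is $1,3,3,1$ (since $\rk(F) = 4$ forces $\dim(F^\perp)_2 = 3$, as a rank $\le 3$ would be detected by a conic in the apolar ideal), and $I_{\mathbb{X}}$ has Hilbert function $1,3,4,4,\ldots$, so $(I_{\mathbb{X}})_2 = (F^\perp)_2$ is a $3$-dimensional space of conics.

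The key step is to analyze this pencil-plus-one net of conics $(I_{\mathbb{X}})_2$. Since $P_1,P_2,P_3$ are collinear on $\ell = V(L)$, every conic in $I_{\{P_1,P_2,P_3\}}$ is of the form $L \cdot M$ for some $M \in T_1$; imposing vanishing at $P_4$ as well forces $M$ to vanish at $P_4$ when $P_4 \notin \ell$... more carefully: a conic vanishing at $P_1,P_2,P_3$ and at $P_4$ either contains $\ell$ (hence equals $L \cdot M$ with $M(P_4) = 0$, a $1$-dimensional family) or is irreducible/does not contain $\ell$. Counting: conics through $4$ general-position-on-a-line-plus-one points form a $\binom{2+2}{2} - 4 = 2$-dimensional projective family, i.e. $\dim (I_{\mathbb{X}})_2 = 2$ — but we need it to be $3$. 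Wait: $\dim T_2 = 6$, four points impose four conditions, but $P_1,P_2,P_3$ collinear means they impose only... no, three points always impose three independent conditions on conics; the collinearity is not a failure of independence here. So $\dim(I_{\mathbb{X}})_2 = 6 - 4 = 2$ generically. The resolution is that for $\mathbb{X}$ to be apolar to a rank-$4$ cubic we need $\dim(F^\perp)_2 = 3$, and $(I_{\mathbb{X}})_2 \subseteq (F^\perp)_2$ with $\dim(I_{\mathbb{X}})_2 = 2 < 3$ in the generic collinear configuration — meaning the four points do \emph{not} in general give an apolar set, and when they do, $F$ is forced into a special form. I would therefore instead argue: choose coordinates so $\ell = V(Z)$ and $P_1,P_2,P_3 \in V(Z)$, $P_4 = [0:0:1]$. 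Then $(I_{\mathbb{X}})_2$ contains $ZX, ZY$ (conics $= Z \cdot (\text{line through } P_4)$) and the unique conic through $P_1,P_2,P_3,P_4$ not containing $\ell$, say $Q_0$. So $(F^\perp)_2 = \langle ZX, ZY, Q_0 \rangle$. Now $F^\perp$ is Gorenstein with socle degree $3$, so $(F^\perp)_3 = T_1 \cdot (F^\perp)_2 $ has dimension $9$, and the pairing $T_1 \times (F^\perp)_2 \to (F^\perp)_3$ has a $1$-dimensional kernel. Computing $X \cdot ZX, Y \cdot ZX, Z\cdot ZX, X \cdot ZY, \ldots$ modulo the cubic generators, and demanding this kernel relation, will pin down $F$ up to the $\mathrm{GL}_3$-action: I expect to find $F^\perp = (ZX, ZY, Z^2 - cXY)$ or similar, which by Macaulay duality is the cusp $x^3 - y^2 z$ up to change of coordinates (type (8)). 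The main obstacle is the bookkeeping of this multiplication table: one must verify that the constraint "$F^\perp$ Gorenstein with the prescribed $(F^\perp)_2$" admits, after projective change of coordinates absorbing the freedom in placing $P_1,P_2,P_3$ on $\ell$ and in $Q_0$, exactly the cuspidal orbit — in particular ruling out types (6), (7), (9), (10) by showing their apolar ideals' degree-$2$ parts never have the shape $\langle ZX, ZY, Q_0\rangle$, i.e. never contain a pencil of conics with a common line factor. Concretely, the clean version of the argument is: the conics $ZX$ and $ZY$ in $(F^\perp)_2$ share the common factor $Z$, so $(F^\perp)_2$ contains a pencil $\{Z \cdot M : M \in T_1\}$ of reducible conics through a common point; among the cubics (5),(6),(7),(8),(9),(10) this happens only for the cusp, which one checks directly from the normal forms (for (8): $(x^3-y^2z)^\perp = (XY, XZ - Y^2)$... let me recompute — $(x^3 - y^2 z)^\perp = (XY,\; X^3, \; \ldots)$; in any case the degree-$2$ part of the cusp's apolar ideal contains the pencil $\langle XY, \, \text{?}\rangle$ with common factor, matching our configuration). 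Thus I would state the final step as: \emph{the only rank-$4$ plane cubic whose apolar ideal has a degree-two part containing a pencil of conics sharing a common linear factor is the cusp}, verified case-by-case against the normal forms in the table, completing the proof.

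I would also double-check the converse direction for sanity: for $F = x^3 - y^2z$, one has $F^\perp = (XY, Z^2 + \tfrac{1}{3}X^3\cdot(\ldots))$ — rather, $F^\perp$ is generated in degrees that give Hilbert function $1,3,3,1$, and it does contain reducible conics with a common factor, so such collinear-triple apolar sets genuinely exist for the cusp; this confirms the lemma is not vacuous. The proof is thus: (i) reduce to the Hilbert function $1,3,3,1$ and identify $(F^\perp)_2 = (I_{\mathbb{X}})_2$; (ii) after a coordinate change put $\ell = V(Z)$, $P_4 = [0{:}0{:}1]$ and deduce $ZX, ZY \in (F^\perp)_2$; (iii) invoke the classification to conclude $F$ is of type (8), since no other rank-$4$ cubic has two independent conics with a common linear factor in its apolar ideal.
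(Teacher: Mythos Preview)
Your core observation is correct and matches the paper's: after placing the line through the three collinear points as $Z=0$ and the fourth point at $[0{:}0{:}1]$, the reducible conics $ZX$ and $ZY$ lie in $I_{\mathbb{X}} \subset F^\perp$. But you surround this with errors. Your claimed identification $(I_{\mathbb{X}})_2 = (F^\perp)_2$ is false --- you yourself compute $\dim(I_{\mathbb{X}})_2 = 2$ while $\dim(F^\perp)_2 = 3$, and then re-assert the equality in your summary step (i). Relatedly, your conic $Q_0$ does not exist: every conic through three collinear points plus one further point must contain the line $\ell$ (the restriction to $\ell$ is a binary quadratic vanishing at three distinct points of $\PP^1$, hence zero), so $(I_{\mathbb{X}})_2 = \langle ZX, ZY \rangle$ and nothing more. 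Drop step (i) entirely; step (ii) follows directly from $I_{\mathbb{X}} \subset F^\perp$ and needs no Hilbert-function identification.

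Your step (iii) --- a case-by-case check against the normal forms for which rank-$4$ cubics admit two independent conics with a common linear factor in $(F^\perp)_2$ --- would work, but the paper finishes far more directly, and you should see this argument. From $XY, XZ \in F^\perp$ (the paper's coordinates: line $X=0$, fourth point $[1{:}0{:}0]$) one reads off immediately that $X\circ F$ is annihilated by both $Y$ and $Z$, hence $F = c\,x^3 + G(y,z)$ with $G$ a binary cubic. Since $\rk(F)=4$ forces $c\neq 0$, the rank-additivity result $\rk(x^3 + G) = 1 + \rk(G)$ from \cite{CCC} gives $\rk(G) = 3$, and the Sylvester classification says every rank-$3$ binary cubic has the form $LM^2$. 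Thus $F = x^3 + LM^2$ is a cusp --- no case analysis against the table, no Gorenstein multiplication computations.
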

\begin{proof} We can assume that the three collinear points lie on the line defined by $X$ and the the point not on the line is $[1:0:0]$. Thus,
$XY,XZ\in F^\perp$ and $F=x^3+G(y,z)$. By \cite[Proposition 3.1]{CCC} we have that $\rk(F)=1+\rk(G)$ and thus $\rk(G)=3$. Since all degree three
binary cubics of rank three are monomials we get that, after a change of variables, $G$ can be written as $LM^2$, where $L,M\in \CC[y,z]$ are linear forms. Hence, $F=x^3+LM^2$ and this completes the proof.

\end{proof}

Among the rank 4 plane cubics, we have already analyzed the cusps. Now, we consider families (6),(7) and (9).
Due to Lemma \ref{cusp LEMMA}, we can actually study these families using the approach described in the following remark.

\begin{remark}\label{rankfourstrategy}
Let $F$ be a rank four plane cubic which is not a cusp. Since $F$ is not a binary form, $\caL=(F^{\perp})_2$ is a net of conics and we let
$\caL=\langle C_1,C_2,C_3\rangle$. Since $F$ is not a cusp, all set of four points apolar to $F$ are the complete intersection of two conics.
Thus, when we look for minimal Waring decompositions of $F$, we only need to look at pencils of conics contained in $\caL$ with four distinct base points.

In particular, fixing a point $P\in\PP^2$, we can consider the pencil $\caL(-P)$ of plane conics in $\caL$ passing through
$P$. If $\caL(-P)$ has four
distinct base points, then $P\in\caW_F$; otherwise, we have that the base locus of $\caL(-P)$ is not reduced and $P\in\caF_F$.
In the plane $\PP(\caL)$, we consider the degree three curve $\Delta$ of reducible conics in $\caL$. We recall that a pencil of conics $\caL'$ has
four distinct base points, no three of them collinear, if and only if the pencil contains exactly three reducible conics. In conclusion, given a point $P\in\PP^2$,
we consider the line $\PP(\caL(-P))\subset\PP(\caL)$: if the line is a proper secant line of $\Delta$, that is it cuts $\Delta$ in three distinct points,
we have that $P\in\caW_F$; otherwise, $P\in\caF_F$. Thus we have to study the dual curve $\check\Delta\subset\check\PP(\mathcal{L})$ of lines not intersecting $\Delta$ in three distinct points.

An equation for $\check\Delta$ can be found with a careful use of elimination. To explicitly find $\mathcal{F}_F$ we the consider the map:
\[\phi:\PP(S_1)\longrightarrow\check\PP(\mathcal{L})\]
such that $\phi([a:b:c])=[C_1(a,b,c):C_2(a,b,c):C_3(a,b,c)]$. Note that $\phi$ is defined everywhere and that it is generically $4:1$. In particular, \[\mathcal{F}_F=\phi^{-1}(\check\Delta).\]
\end{remark}

\begin{theorem}\label{Fam 6}
 If $F=x(yz+x^2)$, then $\caF_F=V(XYZ(X^2-12YZ))$.
\end{theorem}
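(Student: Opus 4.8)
The strategy is exactly the one laid out in Remark \ref{rankfourstrategy}, applied to the specific normal form $F = x(yz+x^2) = x^3 + xyz$. First I would compute the apolar ideal $F^\perp \subset T = \CC[X,Y,Z]$ in low degrees; a direct differentiation check gives $(F^\perp)_2 = \caL = \langle X^2 - \tfrac{1}{?}\,YZ,\, Y^2,\, Z^2\rangle$ (I would nail down the exact scalar by evaluating $g \circ F$ for $g = \alpha X^2 + \beta Y^2 + \gamma Z^2 + \ldots$ and solving the linear system), so $\caL$ is a net of conics. I would then verify that $F$ has rank $4$ (this is in the table, citing \cite{LT}) and is of type (6), hence not a cusp, so Lemma \ref{cusp LEMMA} does not force the degenerate picture and Remark \ref{rankfourstrategy} applies verbatim: every apolar set of four points is a complete intersection of two conics in $\caL$.

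Next I would carry out the two explicit pieces of Remark \ref{rankfourstrategy}. Fix a basis $C_1, C_2, C_3$ of $\caL$ and form the discriminant cubic $\Delta \subset \PP(\caL) \cong \PP^2$ of reducible (rank $\le 2$) conics: writing a general member $\lambda_1 C_1 + \lambda_2 C_2 + \lambda_3 C_3$ as a symmetric $3\times 3$ matrix and setting its determinant to zero gives a cubic $\Delta(\lambda_1,\lambda_2,\lambda_3) = 0$. Then I would compute the dual curve $\check\Delta \subset \check\PP(\caL)$ — the locus of lines meeting $\Delta$ in fewer than three distinct points, i.e.\ tangent lines to $\Delta$ together with any exceptional components — via elimination (resultant of $\Delta$ along the line with its derivative). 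Finally I would write down the everywhere-defined $4{:}1$ map $\phi : \PP(S_1) \to \check\PP(\caL)$, $\phi([a:b:c]) = [C_1(a,b,c) : C_2(a,b,c) : C_3(a,b,c)]$, and compute $\caF_F = \phi^{-1}(\check\Delta)$ by substituting the coordinates of $\phi$ into the equation of $\check\Delta$ and factoring the resulting form in $X,Y,Z$. The claim is that this factors as $XYZ(X^2 - 12YZ)$.

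The three linear factors $X, Y, Z$ should be easy to predict and check directly: the net $\caL$ contains $Y^2$ and $Z^2$, so a point $P$ on $\{X=0\}$, $\{Y=0\}$, or $\{Z=0\}$ will make the pencil $\caL(-P)$ degenerate in a way visible by inspection (e.g.\ a line $\{X=0\}$ is a fixed component of many members, preventing four distinct reduced base points); alternatively one sees these are forbidden because $F$ is, up to the $x^3$ twist, built from the monomial-like piece $xyz$ whose forbidden locus is $V(XYZ)$ by Theorem \ref{monomials Thm}, and the tangent-conic geometry only adds more constraints. The genuinely new factor is the conic $X^2 - 12YZ$, and pinning down the constant $12$ is where the main computational obstacle lies: it emerges from the precise equation of the dual curve $\check\Delta$ pulled back through $\phi$, so one must keep careful track of all coefficients through the determinant expansion, the elimination step, and the substitution. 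I would organize this by choosing the cleanest basis of $\caL$ (one diagonalizing as much as possible), computing $\det$ of the $3\times 3$ matrix pencil symbolically, and then either using a resultant or — since $\Delta$ is only a cubic — parametrizing its tangent lines directly; a sanity check at the end is that $\deg \caF_F = 4 = \deg\phi \cdot \deg\check\Delta / (\text{something})$ is consistent with $\check\Delta$ being a quartic dual to a nodal/cuspidal cubic, which matches $\deg(XYZ(X^2-12YZ)) = 4$.

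To finish, I would confirm the reverse inclusion: for a point $P$ *not* on $V(XYZ(X^2-12YZ))$, the line $\PP(\caL(-P))$ is a proper trisecant of $\Delta$, so $\caL(-P)$ is a pencil with four distinct base points, no three collinear (using the criterion in Remark \ref{rankfourstrategy} that a pencil of conics has four distinct non-collinear base points iff it contains exactly three reducible members), giving an apolar set of $4 = \rk(F)$ points through $P$, hence $P \in \caW_F$. Combined with the forward inclusion from $\phi^{-1}(\check\Delta) \subseteq \caF_F$, this yields $\caF_F = V(XYZ(X^2 - 12YZ))$ exactly.
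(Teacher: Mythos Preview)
Your overall strategy is the paper's: apply Remark \ref{rankfourstrategy} to the net $\caL=(F^\perp)_2=\langle X^2-6YZ,\,Y^2,\,Z^2\rangle$, form the discriminant cubic $\Delta\subset\PP(\caL)$, and pull back the ``bad'' lines through $\phi$. The substantive difference is that you plan a general dual-curve elimination, whereas the paper observes that here $\Delta$ is \emph{reducible}: the determinant gives $\alpha(\beta\gamma-9\alpha^2)=0$, a smooth conic $\caC$ together with a secant line $r:\alpha=0$. This collapses the analysis into three elementary cases---lines tangent to $\caC$, and lines through each of the two points $\caC\cap r=\{[0{:}1{:}0],[0{:}0{:}1]\}$---which immediately yield the factors $Y^2$, $Z^2$ (from the two pencils) and $X^2(X^2-12YZ)$ (from the tangency condition $(X^2-6YZ)^2=36Y^2Z^2$), with no resultant needed. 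Your elimination would eventually recover the same thing, but you should correct two slips: first, $\Delta$ is neither nodal nor cuspidal here, so your expectation about $\check\Delta$ is off; second, $\deg\bigl(XYZ(X^2-12YZ)\bigr)=5$, not $4$ (as a scheme $\phi^{-1}(\check\Delta)$ has degree $8=2\cdot 4$, and the radical drops to $5$). Also, your heuristic that ``$\{X=0\}$ is a fixed component of many members'' is incorrect---no conic in $\caL$ contains the line $X=0$; the factor $X$ actually arises from tangency to $\caC$, not from a fixed component.
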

\begin{proof}
Let $\caL=(F^{\perp})_2$ and let $\caC_1: C_1=X^2-6YZ=0$, $\caC_2:C_2=Y^2=0$, and $\caC_3:C_3=Z^2=0$ be the conics generating $\caL$.
In the plane $\PP(\caL)$ with coordinate $\alpha,\beta$ and $\gamma$, let $\Delta$ be the cubic of reducible conics in $\caL$. By computing we get the following equation for $\Delta$:
$$\det
\left[
\begin{matrix}
 \alpha & 0 & 0 \\
 0 & \beta & -3\alpha \\
 0 & -3\alpha & \gamma \\
\end{matrix}
\right]=0,$$

 that is,  $$ \alpha\beta\gamma-9\alpha^3=0.$$

In this case, $\Delta$ is the union of the conic $\caC:9\alpha^2-\beta\gamma=0$ and the secant line $r:\alpha=0$. The line $r$ corresponds to $\caL(-[1:0:0])$ and then, by Remark \ref{rankfourstrategy}, we have that $[1:0:0]\in\caF_F$.

By Remark \ref{rankfourstrategy}, in order to completely describe $\caF_F$, we have to study two family of lines in $\PP(\caL)$: the tangents to the conic $\caC$ and all the lines passing through the intersection points between the line $r$ and the conic $\caC$, that is through the points $[0:0:1]$ and $[0:1:0]$.
More precisely  the point $P= [X:Y:Z]$ is in $ \caF_F$ if  and only if the line $L$ (of the plane $\PP(\caL)$ )
$$L: C_1(P) \alpha+ C_2(P) \beta+ C_3(P) \gamma=0,$$
that is,
$$L: \alpha (X^2-6YZ) + \beta Y^2 + \gamma Z^2=0,$$
falls in one of the following cases:

\begin{enumerate}[(i)]
\item\label{casesix1} $L$ is tangent to the conic $\caC: \beta\gamma-9\alpha^2=0$;

\item\label{casesix2} $L$  passes  through the point $[0:1:0]$;

\item\label{casesix3} $L$  passes  through the point $[0:0:1]$.
\end{enumerate}

In case (\ref{casesix2}) and (\ref{casesix3}) we get that $Y^2=0$ and $Z^2=0$, respectively.
So $V(YZ) \subset \caF_F$.

Now, by assuming $P \notin \{YZ=0\}$. By an easy  computation we get that the line $L$ is tangent to the conic $\caC$ if $X^2(X^2-12YZ)=0$.

It follows that $\caF_F=V(XYZ(X^2-12YZ))$. See Figure \ref{Waring locus F5}.

\begin{figure}[htbp]
\begin{center}
\begin{tikzpicture}
\draw (-3,2) -- (5,2);
\draw (3,4) -- (3,-2);
\draw (-3,4) -- (5,-1.333);

\draw (0,0) ellipse (3cm and 2cm);

\draw (3,2) node {$\bullet$};
\draw (3,0) node {$\bullet$};
\draw (0,2) node {$\bullet$};

\node [xshift=0.6cm, yshift=0.3cm] at (3,2) {[1:0:0]};
\node [above] at (0.3,2) {[0:1:0]};
\node [right] at (3,0.2) {[0:0:1]};

\node [above] at (-1.8,3.5) {$X=0$};
\node [above] at (-2.5,2) {$Z=0$};
\node [right] at (3,-2) {$Y=0$};
\node  at (-3,-2) {$X^2-12YZ=0$};

\end{tikzpicture}
\end{center}
\caption{The forbidden points of $F=x(yz+x^2)$.}
\label{Waring locus F5}
\end{figure}
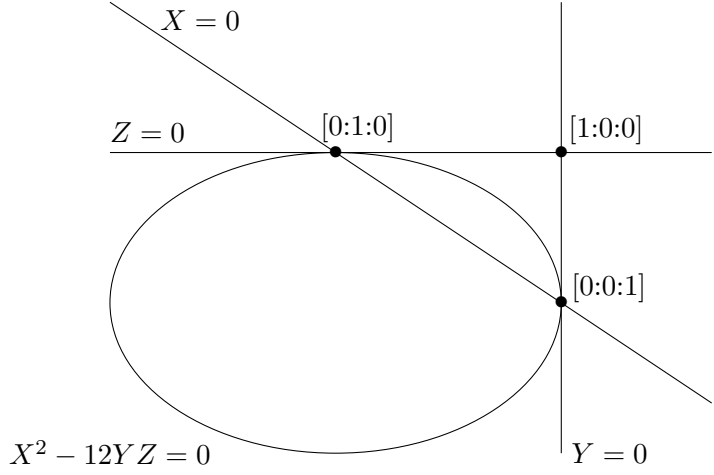

\end{proof}

We now consider family (7), that is nodal cubics.

\begin{theorem}\label{family(7)THM} If $F=y^2z-x^3-xz^2$, then $$\caF_F=V(g_1g_2)$$ where $g_1=X^{3}-6 Y^{2} Z+3 X Z^{2}$ and $g_2=9 X^{4} Y^{2}-4 Y^{6}-24 X Y^{4} Z-30 X^{2} Y^{2} Z^{2}+4 X^{3} Z^{3}-3 Y^{2} Z^{4}-12 X Z^{5}$.
\end{theorem}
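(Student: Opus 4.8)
The plan is to follow exactly the strategy set out in Remark \ref{rankfourstrategy}, adapted to the nodal cubic $F=y^2z-x^3-xz^2$. First I would verify that $F$ is not a cusp (it is a distinct normal form, type (7)) so that every apolar set of four points to $F$ is a complete intersection of two conics, and Lemma \ref{cusp LEMMA} tells us there are no apolar sets with three collinear points to worry about. Next I would compute the apolar ideal $F^{\perp}$ and extract the net of conics $\caL=(F^{\perp})_2=\langle C_1,C_2,C_3\rangle$ by solving the linear system coming from $C\circ F=0$ for conics $C\in T_2$; this is a routine linear algebra computation. I expect the net to contain the conic $g_1=X^3-6Y^2Z+3XZ^2$'s ``polarization'' data in some form — more precisely, I anticipate that $V(g_1)$ will turn out to be the preimage under the map $\phi$ of a distinguished component of $\check\Delta$ (likely corresponding to a singular point of $\Delta$ or to a line component of $\Delta$, just as the factor $\alpha=0$ appeared in Theorem \ref{Fam 6}).

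The core of the argument is then: in the plane $\PP(\caL)$ with coordinates $\alpha,\beta,\gamma$, write the generic conic $\alpha C_1+\beta C_2+\gamma C_3$ as a symmetric $3\times 3$ matrix and take its determinant to get the cubic curve $\Delta\subset\PP(\caL)$ of reducible conics. Then I would determine the dual curve $\check\Delta\subset\check\PP(\caL)$ of lines that fail to be proper trisecants of $\Delta$ — this breaks into the tangent lines of $\Delta$ together with lines through any singular points of $\Delta$, exactly as in the proof of Theorem \ref{Fam 6}. Finally, using the map $\phi([a:b:c])=[C_1(a,b,c):C_2(a,b,c):C_3(a,b,c)]$, which is everywhere defined and generically $4:1$, I compute $\caF_F=\phi^{-1}(\check\Delta)$ by substitution and elimination. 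The two factors $g_1$ and $g_2$ should correspond to the two types of bad lines: $g_1$ (degree $3$) to the lines through the singular locus of $\Delta$ or to a linear component of $\Delta$ pulled back, and $g_2$ (degree $6$) to the pullback of the tangent-line (discriminant/dual conic) condition — note $\deg g_2 = 6$ is consistent with $\phi$ having degree related quantities times the degree of the relevant dual curve.

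Concretely, the steps in order are: (1) compute $F^{\perp}$ and hence $\caL$; (2) form the symmetric matrix of $\alpha C_1+\beta C_2+\gamma C_3$ and compute $\Delta=\det=0$; (3) find the singular points of $\Delta$ and its irreducible decomposition; (4) write down the condition on $[a:b:c]\in\PP(S_1)$ that the line $L_P:\; C_1(P)\alpha+C_2(P)\beta+C_3(P)\gamma=0$ either passes through a singular point of $\Delta$ or is tangent to a (conic) component of $\Delta$; (5) clear denominators and factor the resulting form in $X,Y,Z$, identifying the factors with $g_1$ and $g_2$; (6) conversely check that every point of $V(g_1g_2)$ is genuinely forbidden, i.e. that the corresponding pencil $\caL(-P)$ has non-reduced base locus, and that every point off $V(g_1g_2)$ gives four distinct base points (hence lies in $\caW_F$). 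The reverse inclusion in step (6) is where one must be slightly careful: one has to rule out that a line of $\caL(-P)$ type which is a proper trisecant of $\Delta$ could still give a non-reduced intersection — but this is precisely the classical fact recalled in Remark \ref{rankfourstrategy} that a pencil of conics has four distinct base points, no three collinear, if and only if it contains exactly three reducible members, combined with Lemma \ref{cusp LEMMA} which handles the ``three collinear'' degeneration (it cannot occur since $F$ is not a cusp).

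The main obstacle I expect is the elimination/factorization in step (5): producing the explicit degree-six polynomial $g_2$ requires computing the dual of the cubic $\Delta$ (or of its conic component) and then pulling it back along the $4:1$ map $\phi$, and checking that the answer factors cleanly as stated rather than into some larger irreducible piece. This is the computation the authors say is done ``with a careful use of elimination,'' and verifying that nothing extraneous appears (e.g. that $\phi^{-1}$ of a tangent line does not introduce spurious components, or that $g_1$ and $g_2$ are genuinely the complete list) is the delicate bookkeeping part; everything else is structural and parallels Theorem \ref{Fam 6} verbatim.
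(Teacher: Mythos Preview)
Your overall framework is correct and matches the paper's: apply the strategy of Remark~\ref{rankfourstrategy}, compute the net $\caL=(F^\perp)_2$, form the discriminant cubic $\Delta\subset\PP(\caL)$, and identify $\caF_F$ as $\phi^{-1}(\check\Delta)$. Where you go astray is in your anticipation of the geometry of $\Delta$. In the paper one finds $\caL=\langle C_1,C_2,C_3\rangle=\langle XY,\; X^2-3Z^2,\; Y^2+XZ\rangle$ and
\[
\Delta:\quad 3\alpha^2\beta-12\beta^2\gamma-\gamma^3=0,
\]
which is a \emph{smooth irreducible} plane cubic. There is no line component and no singular point, so your steps (3)--(4) (``lines through the singular locus of $\Delta$'' or ``tangent to a conic component of $\Delta$'') have no content here: the entire $\check\Delta$ is the dual of a smooth cubic, and \emph{both} factors $g_1$ and $g_2$ come from the single tangency condition, not from two distinct geometric sources as you conjecture.

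Accordingly the paper's computation is more direct than your outline. For $P$ with $C_1(P)\neq 0$, solve the line equation $C_1(P)\alpha+C_2(P)\beta+C_3(P)\gamma=0$ for $\alpha$, substitute into the equation of $\Delta$ to obtain a binary cubic in $\beta,\gamma$, and compute its discriminant $D$; one gets $D=27\,C_1^4\,g_1^2\,g_2$. (Note $g_1$ appears squared --- the scheme structure on $\caF_F$ is $(g_1^2g_2)$, as the paper remarks afterwards --- so the factorization into $g_1$ and $g_2$ reflects the distinction between flex tangents and ordinary tangents of the smooth cubic $\Delta$, not a line/conic decomposition.) The remaining locus $C_1(P)=0$ is handled by a short direct check that $\caF_F\cap V(C_1)=V(g_1g_2)\cap V(C_1)$. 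Your plan will succeed once you drop the expectation that this case parallels Theorem~\ref{Fam 6}; structurally it is the smooth case, as in part~(\ref{cubicalisciacasouno}) of Theorem~\ref{smoothcubicthm}.
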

\begin{proof}
Note that $[1:0:0]\in\caF_{F}$. In fact $F+x^3=z(y^2-xz)$ represents a conic and a line tangent to it, namely it is in the family $(10)$ and
hence it has rank equal to five.

Let $\caL=(F^{\perp})_2$ and denote by $\caC_1: C_1=XY=0,~\caC_2:C_2=X^2-3Z^2=0$ and $\caC_3:C_3=Y^2+XZ=0$ its generators.

In the plane $\PP(\caL)$ with coordinates $\alpha,\beta$, and $\gamma$ let $\Delta$ be the cubic of reducible conics in $\caL$. By computing we see $\Delta$
that is defined by
$$\det\left[\begin{matrix}\beta & \frac{1}{2}\alpha & \frac{1}{2}\gamma \\ \frac{1}{2}\alpha & \gamma & 0 \\
\frac{1}{2}\gamma & 0 & -3\beta \end{matrix}\right] = 0,$$
that is, $$3\alpha^2\beta - 12\beta^2\gamma - \gamma^3=0.$$

In this case, we have that $\Delta$ is an irreducible smooth cubic. Hence,  we have that
\begin{equation}\label{3points}
 \caF_{F}=\{P\in\PP^2 :\PP(\caL(-P))\text{ is a tangent line to } \Delta\subset\PP(\caL)\}.
\end{equation}
Thus, we are looking for points $P$ such that the line
\[C_1(P)\alpha+C_2(P)\beta+C_3(P)\gamma=0\]
is tangent to $\Delta$. We consider two cases, namely $C_1(P)=0$ and $C_1(P)\neq 0$.

If $C_1(P)\neq 0$, we compute $\alpha$ from the equation of the line and we substitute in the equation of $\Delta$. Then it is enough to compute the discriminant $D$ of the following form in $\beta$ and $\gamma$
\[3(C_2\beta+C_3\gamma)^2\beta-12C_1^2\beta^2\gamma-C_1^2\gamma^3\]
and we get $D=27C_1^4g_1^2g_2$. The script used in \texttt{Macaulay2} to do this computation can be found in \cite[Algorithm 2.3.23]{One16}. Thus, if $C_1(P)\neq 0$, $P\in\caF_F$ if and only if $P\in V(g_1g_2)$.

If $C_1(P)=0$, by direct computation we check that $\caF_F\cap V(C_1)=V(g_1g_2)\cap V(C_1)$. Hence the proof is completed.

\end{proof}

\begin{remark}
In this paper we consider $\caF_F$, and $\caW_F$, as varieties and not as schemes. However, we found in Theorem \ref{family(7)THM} that the ideal of $\caF_F$ is $(g_1^2g_2)$.
\end{remark}

\begin{remark}
 The treatment of the forbidden locus of a plane cubic given in \eqref{3points} is similar to a remark by De
 Paolis recently recalled in \cite{B}. Here, the author gives an algorithm to construct a decomposition of a general plane cubic $F$ as sum of four cubes of linear forms. The algorithm starts with a linear form defining a line intersecting the Hessian of $F$ in precisely three points.
\end{remark}

We now consider the case of cubics in family (9) and we use the map $\phi$ defined in Remark \ref{rankfourstrategy}.

\begin{theorem}\label{smoothcubicthm}
If $F=x^3+y^3+z^3+axyz$ belongs to family $(9)$, then
\begin{enumerate}
\item\label{cubicalisciacasouno} if $\left({a^3-54\over 9a}\right)^3\neq 27$, then $\caF_F=\phi^{-1}(\check\Delta)$ where $\check\Delta$ is the dual curve of the smooth plane cubic $$\alpha^3+\beta^3+\gamma^3-{(a^3-54)\over 9a}\alpha\beta\gamma=0;$$

\item otherwise, $\caF_F$ is the union of three lines pairwise intersecting in three distinct points.
\end{enumerate}
\end{theorem}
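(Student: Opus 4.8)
The plan is to follow the strategy of Remark \ref{rankfourstrategy}, which applies because a cubic in family (9) is smooth, hence neither a cusp nor a binary form, so $\caL=(F^\perp)_2$ is a net of conics and every set of four points apolar to $F$ is the complete intersection of two conics in $\caL$. First I would compute the net: from $X^2\circ F=6x$, $Y^2\circ F=6y$, $Z^2\circ F=6z$, $XY\circ F=az$, $XZ\circ F=ay$, $YZ\circ F=ax$ and $a\neq 0$ one gets $\caL=\langle C_1,C_2,C_3\rangle$ with $C_1=aX^2-6YZ$, $C_2=aY^2-6XZ$, $C_3=aZ^2-6XY$. Forming the symmetric matrix $M$ of the general member $\alpha C_1+\beta C_2+\gamma C_3$ and expanding the determinant, the cubic $\Delta\subset\PP(\caL)$ of reducible conics is $-9a(\alpha^3+\beta^3+\gamma^3)+(a^3-54)\alpha\beta\gamma=0$, i.e.\ (dividing by $-9a$) the Hesse cubic $\alpha^3+\beta^3+\gamma^3-\frac{a^3-54}{9a}\alpha\beta\gamma=0$.

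Next I would invoke the classical description of the Hesse pencil: a member $\alpha^3+\beta^3+\gamma^3-t\,\alpha\beta\gamma$ is smooth exactly when $t^3\neq 27$, and at each of the three values of $t$ with $t^3=27$ it degenerates to a triangle of three pairwise intersecting lines. Hence, when $\left(\frac{a^3-54}{9a}\right)^3\neq 27$, the curve $\Delta$ is a smooth plane cubic; the lines of $\PP(\caL)$ not meeting $\Delta$ in three distinct points are precisely its tangent lines, so the curve $\check\Delta$ of Remark \ref{rankfourstrategy} is the projective dual curve of $\Delta$, and the Remark gives $\caF_F=\phi^{-1}(\check\Delta)$. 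This proves part (1). (One checks, incidentally, that $\left(\frac{a^3-54}{9a}\right)^3=27$ forces $a^3\in\{-27,6^3\}$, so part (2) describes the behaviour on the boundary of family (9); at $a^3=-27$ the form $F$ becomes the monomial $xyz$ up to a change of variables, consistent with $V(X_0X_1X_2)$ from Theorem \ref{monomials Thm}.)

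For part (2), assume $\left(\frac{a^3-54}{9a}\right)^3=27$, so $\Delta=\ell_1\cup\ell_2\cup\ell_3$ is a triangle with non-collinear vertices $p_1,p_2,p_3$. A line of $\PP(\caL)$ fails to be a proper trisecant of $\Delta$ exactly when it passes through some $p_i$ (each $\ell_j$ meets two of the vertices, so it already lies in two of these pencils), whence $\check\Delta=\check p_1\cup\check p_2\cup\check p_3$ is the triangle in $\check\PP(\caL)$ dual to $p_1,p_2,p_3$. The key point is that each vertex $p_i$, being a singular point of the discriminant cubic $\Delta=\{\det M=0\}$, corresponds to a conic of rank one, i.e.\ a double line $Q_i=M_i^2$ with $M_i\in S_1$: indeed $\partial_\alpha\det M=\operatorname{tr}(\operatorname{adj}(M)\,\partial_\alpha M)$, and likewise for $\beta,\gamma$, so vanishing of all three partials at $p_i$ forces $\operatorname{adj}(M(p_i))=0$, i.e.\ $\rk M(p_i)\le 1$ (alternatively one factors $\Delta$ for the explicit $a$ and reads off the double lines). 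Writing $p_i=[\lambda_i:\mu_i:\nu_i]$, we have $\phi(P)\in\check p_i$ iff $(\lambda_iC_1+\mu_iC_2+\nu_iC_3)(P)=Q_i(P)=M_i(P)^2=0$, so $\phi^{-1}(\check p_i)=V(M_i)$ is a line, and
\[
\caF_F=\phi^{-1}(\check\Delta)=V(M_1)\cup V(M_2)\cup V(M_3).
\]
These three lines are distinct, since $M_i\propto M_j$ would give $Q_i\propto Q_j$, hence $p_i=p_j$; and they are not concurrent, since a common point $R$ would give $\phi(R)\in\check p_1\cap\check p_2\cap\check p_3=\emptyset$, impossible as the $\check p_i$ are the sides of a triangle. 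Thus $\caF_F$ is a union of three lines pairwise meeting in three distinct points.

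The only genuinely computational step is the determinant of $M$: getting the coefficient exactly equal to $\tfrac{a^3-54}{9a}$ is essential and is the place where a slip would propagate, but it is immediately verified by machine. The conceptual crux is the observation in part (2) that the vertices of a degenerate $\Delta$ are double lines, so that pulling their dual lines back through the conic map $\phi$ produces reduced lines rather than plane conics — this is exactly what turns $\caF_F$ into a configuration of three lines rather than three conics.
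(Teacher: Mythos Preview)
Your proof is correct and follows exactly the paper's route via Remark~\ref{rankfourstrategy}: the same generators $C_1,C_2,C_3$ of $\caL$, the same determinant giving the Hesse cubic $\Delta$, and the same smooth/triangle dichotomy. For part~(2) you in fact supply more than the paper, which simply writes $\caF_F=\{P:Q_i\in\PP(\caL(-P))\text{ for some }i\}$ and stops; your argument that the vertices $p_i$ of the degenerate $\Delta$ correspond to rank-one conics $M_i^2$, so that $\phi^{-1}(\check p_i)=V(M_i)$ is a line and the three lines form a triangle in $\PP^2$, is precisely the step the paper leaves implicit. One small caveat: the Jacobi implication ``$\nabla\det M(p_i)=0\Rightarrow\operatorname{adj}M(p_i)=0$'' tacitly uses that the net $\caL$ is base-point-free (which does hold here, since $a^3\neq 6^3$), so your fallback to explicit factorisation is the cleaner justification.
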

\begin{proof}
Let  $\caL=(F^{\perp})_2$ and denote by $\caC_1: C_1=aX^2-6YZ=0$, $\caC_2: C_2=aY^2-6XZ=0,$ and $\caC_3: C_3=aZ^2-6XY=0$ its generators. In the plane $\PP(\caL)$ with
coordinates $\alpha,\beta$, and $\gamma$ let $\Delta$ be the cubic curve of reducible conics. By computing we get an equation for $\Delta$
$$\det
\left[
\begin{matrix}
 a\alpha & -3\gamma & -3\beta \\
 -3\gamma & a\beta & -3\alpha \\
 -3\beta & -3\alpha & a\gamma \\
\end{matrix}
\right]=(a^3-54)\alpha\beta\gamma - 9a\alpha^3-9a\beta^3-9a\gamma^3=0. $$

In the numerical case $\left({a^3-54\over 9a}\right)^3\neq 27$, we have that $\Delta$ is a smooth cubic curve. Thus, we have that
$$\caF_{F}=\{P\in\PP^2 :\PP(\caL(-P))\text{ is a tangent line to } \Delta\subset\PP(\caL)\}.$$
Hence we get $\caF_F$ as described in Remark \ref{rankfourstrategy} using the map $\phi$.

Otherwise, $\Delta$ is the union of three lines intersecting in three distinct points $Q_1,Q_2$ and $Q_3$.  Hence,
$$\caF_{F}=\{P\in\PP^2 :Q_i\in\PP(\caL(-P))\mbox{ for some }i\}$$
and the proof is now completed.
\end{proof}

\begin{example}\label{smoothcubicthm Example}
Consider $a=-6$, thus we are in case \eqref{cubicalisciacasouno} of Theorem \ref{smoothcubicthm}. We can find equations for $\caF_F$ using \texttt{Macaulay2} \cite{M2}. The script used can be found in \cite[Algorithm 2.3.23]{One16}. Hence,
$$\caF_{F} =  V(g_1g_2),$$
where $g_1=X^{3}+Y^{3}-5 X Y Z+Z^{3}$ and
$g_2=27 X^{6}-58 X^{3} Y^{3}+27 Y^{6}-18 X^{4} Y Z-18 X Y^{4} Z-109 X^{2} Y^{2} Z^{2}-58 X^{3} Z^{3}-58 Y^{3} Z^{3}-18 X Y Z^{4}+27 Z^{6}.$

\end{example}

We conclude with family (10), that is cubics of rank five.

\begin{theorem}\label{rank5 cubic}
If $F=x(xy+z^2)$, then $\caF_F=\{[1:0:0]\}$.
\end{theorem}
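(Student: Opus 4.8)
The plan is to follow the general strategy of Remark \ref{rankfourstrategy}, adapted to the rank-five situation. Since $F=x(xy+z^2)$ has rank five, a minimal apolar set consists of five distinct points; here $(F^\perp)_2$ is no longer a net but a pencil (in fact $F^\perp$ is generated in degree $2$ and $3$, with $(F^\perp)_2$ a pencil of conics), so the combinatorics of apolar subschemes is different from the rank-four cases. First I would compute $F^\perp$ explicitly: with $T=\CC[X,Y,Z]$ one checks directly that $X^2Z$ and $Z^3$ and so on annihilate $F$, and identifies the pencil $(F^\perp)_2=\langle C_1,C_2\rangle$ together with the degree-three generators. The key point is that $\dim(F^\perp)_3$ governs how much freedom there is in choosing a degree-three form vanishing on a prescribed point and still cutting out, together with the conics, a reduced length-five scheme apolar to $F$.

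Next, to show $[1:0:0]\in\caF_F$: the point $[1:0:0]$ corresponds to the linear form $x$, and $x$ is the ``special'' factor of $F=x(xy+z^2)$ — indeed $F+0\cdot x^3$ is already $F$, and more to the point one shows $\rk(F-\lambda x^3)=\rk(F)$ for all $\lambda$, since subtracting a multiple of $x^3$ does not lower the rank (the conic $xy+z^2$ is tangent to the line $x=0$, which is exactly the geometric obstruction noted in the table for family (10)). Concretely, $x^3\in F^\perp$-analysis: $X^3$ does \emph{not} lie in $F^\perp$, but one checks that no reduced set of five points apolar to $F$ can contain $[1:0:0]$, because any conic in the pencil $(F^\perp)_2$ through $[1:0:0]$ is forced to be singular/non-reduced there, so the resulting length-five apolar scheme is never reduced at $[1:0:0]$. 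This is the ``forbidden'' direction.

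For the reverse inclusion, $\PP^2\setminus\{[1:0:0]\}\subseteq\caW_F$, I would take an arbitrary $P=[a:b:c]\neq[1:0:0]$ and exhibit an explicit reduced set of five points apolar to $F$ containing $P$. Using the Apolarity Lemma, this amounts to finding an ideal $I_\XX\subseteq F^\perp$ with $\XX$ five reduced points, $P\in\XX$. One natural construction: pick the unique conic $C_P$ in the pencil $(F^\perp)_2$ passing through $P$ (the pencil has a base locus of length $\leq 4$; for $P$ off that base locus $C_P$ is determined), and then choose a cubic $G\in(F^\perp)_3$ with $G(P)=0$ such that $C_P\cap\{G=0\}$ is five reduced points — here one must check $C_P$ is not a component of $\{G=0\}$ and that the intersection avoids multiplicity. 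If the base locus of the pencil or low-dimensionality of $(F^\perp)_3$ obstructs this for $P$ in some special position, I would instead build $\XX$ as a suitable union (e.g. base locus of the pencil, which is apolar, plus an extra point) and verify apolarity by a direct annihilation check. The construction should be uniform except possibly along $V(C_1)\cap V(C_2)$ and the line $x=0$, which need separate direct verification.

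The main obstacle I expect is the reverse inclusion at the boundary: showing that points $P$ lying on the line $\{x=0\}$ (other than $[1:0:0]$) or on one of the degenerate members of the pencil really do admit a reduced five-point apolar scheme. In those positions the ``generic'' recipe (conic-through-$P$ meets a cubic in five reduced points) can degenerate — the conic may be reducible with $P$ at a node, or the available cubic through $P$ may be forced to share a component with the conic — so I would handle these strata by an ad hoc explicit choice of five points and a direct check that $I_\XX\subseteq F^\perp$, possibly invoking a \texttt{Macaulay2} computation as in the previous theorems (cf. \cite[Algorithm 2.3.23]{One16}). Once every $P\neq[1:0:0]$ is shown to lie in $\caW_F$, combined with $[1:0:0]\in\caF_F$, we conclude $\caF_F=\{[1:0:0]\}$.
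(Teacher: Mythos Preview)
Your proposal rests on a mistaken computation: $(F^\perp)_2$ is a \emph{net}, not a pencil. With $F=x^2y+xz^2$ one checks directly that $Y^2$, $YZ$, and $XY-Z^2$ all annihilate $F$, so $\dim(F^\perp)_2=3$ (as it must be for any ternary cubic that genuinely involves three variables, since the Hilbert function of $T/F^\perp$ is $1,3,3,1$). Consequently your construction ``pick the unique conic $C_P$ in the pencil through $P$'' collapses: through a general $P$ there is a whole pencil of apolar conics. More seriously, the scheme $C_P\cap\{G=0\}$ you propose has length $6$, not $5$, so it cannot be a minimal apolar set; five apolar points do not arise as a conic--cubic complete intersection. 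Your forbidden-direction argument (``any conic through $[1{:}0{:}0]$ is singular there, so no reduced five-point scheme contains it'') is also unsupported: in fact $[1{:}0{:}0]$ is a base point of the entire net $(F^\perp)_2$, yet that alone does not prevent it from lying in a reduced apolar set of five points.

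The paper takes a quite different and much shorter route that bypasses apolar ideals entirely. It uses the equivalence $[L]\in\caF_F \iff \rk(F-\lambda L^3)=5$ for every $\lambda\in\CC$, together with the fact (from the classification table) that a plane cubic has rank $5$ if and only if it is a smooth conic plus a tangent line. Thus $[L]\in\caF_F$ forces every member of the pencil $\langle F,L^3\rangle$ to be reducible; since this pencil is not composed with a pencil, Bertini's second theorem gives a fixed component, which must be $x=0$, hence $L=x$. Conversely, $F-\lambda x^3=x(xy+z^2-\lambda x^2)$ is always a conic tangent to $x=0$, so it stays in family (10) and keeps rank $5$. This argument avoids constructing five-point apolar schemes altogether; if you wish to salvage the apolarity approach, you would need a genuinely different recipe for producing reduced length-five subschemes of $V(F^\perp)$ through a prescribed point.
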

\begin{proof}
Let $L$ be  a linear form.  The following are equivalent:

\begin{enumerate}[(i)]

\item\label{maxrank1}
$[L]\in\mathcal{F}_F$;

\item\label{maxrank2}
$\rk(F-\lambda L^3)=5$ for all $\lambda\in\mathbb{C}$;

\item\label{maxrank3}
$F-\lambda L^3=0$ is the union of an irreducible conic and a tangent line, for all $\lambda\in\mathbb{C}$;

\item\label{maxrank4}
$F$ and $L^3$ must have the common factor $L$, that is, the line $L=0$ is the line $x=0$.
.
\end{enumerate}

It easy to show that (\ref{maxrank1}) and (\ref{maxrank2}) are equivalent.

For the equivalence between (\ref{maxrank2}) and (\ref{maxrank3}) see the table in Subsection \ref {Plane Cubics Section}.

If  (\ref{maxrank3})  holds, then all the elements in the linear system given by $F$ and $L^3$ are reducible; note that the linear system is not composed with a pencil. Thus, by the second Bertini's Theorem, the linear system has the fixed component $x=0$.

To see that (\ref{maxrank4}) implies (\ref{maxrank3}), note that for all $\lambda\in\mathbb{C}$,  the cubic $x(xy+z^2+\lambda x^2)=0$ is the union of an irreducible conic and a tangent line.
\end{proof}

To finish the treatment of plane cubics we need to deal with the cusp, that is type (4). In three variables an ad hoc argument can be produced, however we prefer to refer to a more general result, namely Theorem \ref{waringdeggeneralcuspTHM}.

\subsection{The forms $x_0^a(x_1^b+\ldots+x_n^b)$ and $x_0^a(x_0^b+x_1^b+\ldots+x_n^b)$}

We study now the Waring loci for a family of reducible forms.

\begin{theorem}\label{reducibleTHM} Let $F= x_0^a(x_1^b+\ldots+x_n^b)$ and
$G= x_0^a(x_0^b+x_1^b+\ldots+x_n^b)$,  where $n \geq 2$, $ a+1 \geq b \geq 3$.
Then
$$\caW_F= \caW_G = V(X_1X_2, X_1X_3,   \ldots , X_1X_n, X_2X_3,  \ldots ,X_2X_n, \ldots , X_{n-1}X_n) \setminus \{P  \}  ,$$
that is, the Waring loci are the coordinate lines through the point $P=[1:0: \ldots :0]$ minus the point itself.

\end{theorem}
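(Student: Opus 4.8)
The plan is to work with the apolar ideal and use the Apolarity Lemma (Lemma~\ref{Apolarity Lemma}) together with the description of Waring rank via adding coprime variables. First I would compute $F^\perp$ and $G^\perp$. For $F = x_0^a(x_1^b+\cdots+x_n^b)$ with $a+1\geq b\geq 3$, one checks that $(F^\perp)_1 = 0$ (so $F$ essentially involves all $n+1$ variables), and the low-degree part of $F^\perp$ contains all the "mixed" quadratic monomials $X_iX_j$ for $1\leq i<j\leq n$ as well as forms relating $X_0^{?}$ to the $X_i^{?}$. The key structural point to extract is: \emph{every} minimal Waring decomposition of $F$ uses only linear forms $L$ whose class lies on one of the coordinate lines through $P=[1:0:\cdots:0]$, i.e. $[L]\in V(X_iX_j : 1\le i<j\le n)$, and conversely every such point except $P$ itself actually occurs. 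So the statement splits into an inclusion $\caW_F\subseteq V(\cdots)\setminus\{P\}$ and a reverse inclusion, and similarly for $G$.

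For the inclusion $\caW_F\subseteq V(\{X_iX_j\})$: given a minimal apolar set $\XX$ for $F$, each point of $\XX$ corresponds to a linear form appearing in a decomposition $F=\sum c_k L_k^b$ — wait, the degree here is $d=a+b$, so $F=\sum c_k L_k^{a+b}$. I would argue that since the mixed monomials $X_iX_j\in F^\perp$, the ideal $I_\XX\subseteq F^\perp$ forces each point $P_k\in\XX$ to satisfy $p_i^{(k)}p_j^{(k)}=0$ for all $i<j$ in $\{1,\dots,n\}$; indeed $X_iX_j$ must vanish on $\XX$, and because $\XX$ is reduced and of minimal cardinality, no point can have two nonzero coordinates among $x_1,\dots,x_n$ (a point with, say, $p_1,p_2\neq 0$ would have to be "killed" by $X_1X_2\in I_\XX\subseteq F^\perp$, impossible). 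Hence $[L_k]$ lies on a coordinate line through $P$. To exclude $P$ itself, I would use the "add a new variable" fact $\rk(H+y^d)=\rk(H)+1$ from \cite{CCC} (as in the proof of Proposition~\ref{prop:essential_variables}): if $x_0^{a+b}$ appeared in a minimal decomposition, then $F - c\,x_0^{a+b} = x_0^a(x_1^b+\cdots+x_n^b) - c\,x_0^{a+b}$ would have rank $\rk(F)-1$, but analyzing this form (it still essentially involves $x_0$ in a way that cannot drop the rank — here I would invoke the monomial/known-rank computations, e.g. that $\rk(x_0^a x_j^b)$ is governed by \cite{CCG} and the coprime-sum behaviour) gives a contradiction.

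For the reverse inclusion, take any point $[L]$ on a coordinate line through $P$, say $L = \alpha x_0 + \beta x_j$ with $\beta\neq 0$ (and $\alpha$ arbitrary). I would exhibit an explicit minimal apolar set containing $[L]$: build it as a union of ideals of reduced points, one "block" in each coordinate $2$-plane $\langle x_0, x_i\rangle$, using the complete-intersection structure of apolar sets to monomials $x_0^a x_i^b$ (from \cite{BBT}), chosen so that the block in the plane $\langle x_0,x_j\rangle$ passes through $[L]$. The rank count has to match $\rk(F)$; here I would need the value of $\rk(F)$ for this family, which should come from the coprime-sum results of the type quoted before Theorem~\ref{waringdegTHM} (sums of forms in disjoint variables, so $\rk(F)=\sum_{i=1}^n \rk(x_0^a x_i^b)$, and each summand rank is $\max(a+1,b+1)=a+1$ when $a+1\ge b$, except one needs to be careful about the shared variable $x_0$ — the actual count may be $n\cdot(a+1) - (n-1)$ or similar because the point $[1:0:\cdots:0]$ can be shared). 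Getting this bookkeeping exactly right, and verifying that a decomposition through a prescribed $[L]$ still achieves the minimum, is the main obstacle: one must show that forcing one summand onto the line through $P$ in the $j$-th plane does not cost an extra term, which amounts to a careful Hilbert-function / base-point count on the relevant pencils of curves in each coordinate plane. The argument for $G=x_0^a(x_0^b+x_1^b+\cdots+x_n^b)$ is parallel: $G$ differs from $F$ only by the summand $x_0^{a+b}$, which does not change the essential variables nor the mixed-monomial part of the apolar ideal in degree $2$, and the same block construction works after adjusting the block in, say, one chosen plane to absorb the $x_0^{a+b}$ term; the condition $a+1\ge b\ge 3$ is exactly what makes $\rk(x_0^{a+b})=1$ cheaply absorbable and keeps all summand ranks equal to $a+1$.
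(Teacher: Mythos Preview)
Your argument for the inclusion $\caW_F\subseteq V(\{X_iX_j\})$ contains a fatal direction error. You claim that since $X_iX_j\in F^\perp$ (which is true), every minimal apolar set $\XX$ must satisfy $X_iX_j\in I_\XX$, and hence every point of $\XX$ lies on $V(X_iX_j)$. But the Apolarity Lemma gives $I_\XX\subseteq F^\perp$, not the reverse: an element of $F^\perp$ has no reason to lie in $I_\XX$. So the sentence ``a point with $p_1,p_2\neq 0$ would have to be killed by $X_1X_2\in I_\XX\subseteq F^\perp$, impossible'' is simply unjustified---you have not shown $X_1X_2\in I_\XX$. This is the central step and it collapses.

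The paper proves this inclusion by a completely different mechanism. One first uses \cite{CCCGW} to get $\rk(F)=\rk(G)=(a+1)n$ (your speculative count $n(a+1)-(n-1)$ is wrong; there is no sharing of the point $P$). Then, for any $\partial=\alpha_1X_1+\cdots+\alpha_nX_n$ with all $\alpha_i\neq 0$, one again has from \cite{CCCGW} that $\rk(\partial\circ F)=(a+1)n$. Now use the colon-ideal trick: if $\XX$ is minimal apolar to $F$ then $I_\XX:(\partial)\subseteq F^\perp:(\partial)=(\partial\circ F)^\perp$, so the points of $\XX$ off the hyperplane $\{\partial=0\}$ already form an apolar set of size $\geq\rk(\partial\circ F)=|\XX|$; hence $\XX$ has no point on $\{\partial=0\}$. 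Varying the $\alpha_i$ over all nonzero values forces every point of $\XX$ onto the coordinate lines through $P$. The exclusion of $P$ itself and the opposite inclusion are also read off from \cite{CCCGW}, not from an ad hoc block construction. In short, the missing idea is the rank-preserving derivation combined with the colon ideal, which replaces your (invalid) attempt to push elements of $F^\perp$ down into $I_\XX$.
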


\begin{proof}
By  Propositions 4.4 and 4.9 in \cite {CCCGW} we know that

\[ \rk (F) = \rk (G) = (a+1)n.
\]

If  $P\in \caW_F$, we have $\rk (F - \lambda x_0^{a+b}) < (a+1)n$ for some $\lambda  \in \Bbb C$. A contradiction, by  Propositions  4.9 in \cite {CCCGW}.
Hence $P \in \caF_F$.

Analogously, using Proposition 4.4 in \cite {CCCGW}, we get  that $P \in \caF_G$.

Now let $\partial = \alpha _1 X_1+\ldots+ \alpha _n X_n$, where  the $ \alpha _i \in \Bbb C $ are
non-zero, for every $i$.  By Propositions 4.4 and 4.9 in \cite {CCCGW}, we have
\[\rk (\partial  \circ F) = \rk (\partial  \circ G)  =  (a+1)n.
\]

Let $I_{\XX} \subset F^{\perp}$ be the ideal of a set of points giving a Waring decomposition of $F$, i.e. the cardinality
 of $\XX$ is equal to $\rk(F)$. Thus, $I_{\XX^\prime } =  I_{\XX} : (\partial)$ is the ideal of the points of $\XX$ not on the hyperplane
 $\partial = 0$.
 Since   $$I_{\XX^\prime } = I_{\XX} : (\partial) \subset F^{\perp} : (\partial ) = (\partial  \circ F )^{\perp},$$
 we have that $$(a+1)n= \rk (F) = |\XX| \geq |\XX ^\prime | \geq  \rk (\partial  \circ F)=(a+1)n.$$
 It follows that
$\XX$ does not have points on the hyperplane ${\partial=0}$. Thus
\[\caW_F \subseteq V(X_1X_2, X_1X_3,   \ldots , X_1X_n, X_2X_3,  \ldots ,X_2X_n, \ldots , X_{n-1}X_n) \setminus \{P\}  .\]

The opposite inclusion follows from the proof of Proposition 4.4 in \cite {CCCGW}.

Similarly for $G$.
\end {proof}

\section{Strassen's conjecture}\label{Strassen's conjecture}

Fix the following notation:
\[S=\mathbb{C}[x_{1,0} ,\ldots, x_{1,n_1},
\ldots \ldots
, x_{s,0} ,  \ldots, x_{s,n_s} ],\]
\[T=\mathbb{C}[X_{1,0} ,\ldots, X_{1,n_1},
\ldots \ldots
,X_{s,0}   ,\ldots, X_{s,n_s} ].\]

For $i = 1, \dots , s$, we let
\[ S^{[i]} = \mathbb{C}[x_{i,0} ,\ldots, x_{i,n_i}]  ,\]
\[ T^{[i]} =\mathbb{C}[X_{i,0} ,\ldots, X_{i,n_i} ],\]
\[F_i \in  S^{[i]}_d, \]
and
\[ F = F_1 + \cdots + F_s \in S_d .\]

We consider  $F_i \in S$ and thus
\[F_i^\perp=\left\{g\in T \mid g\circ F_i=0\right\}.\]

\begin{conjecture}[{\sc Strassen's conjecture}]\label{strassenconj}
If $F = \sum_{i=1}^s F_i\in S$ is a form such that $F_i \in S^{[i]}$ for all
 $i=1,\ldots,s$, then $$\rk(F)=\rk(F_1)+\ldots +\rk(F_s).$$
\end{conjecture}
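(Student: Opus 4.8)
The plan is to split the equality into its two inequalities. The bound $\rk(F)\le \rk(F_1)+\ldots+\rk(F_s)$ is immediate: concatenating minimal Waring decompositions $F_i=\sum_j L_{i,j}^d$ with $L_{i,j}\in S^{[i]}_1$ produces a decomposition of $F$ with $\sum_i\rk(F_i)$ summands. So the whole content is the reverse inequality $\rk(F)\ge\rk(F_1)+\ldots+\rk(F_s)$, and an obvious induction on $s$ reduces us to $s=2$, i.e. to proving $\rk(F_1+F_2)\ge\rk(F_1)+\rk(F_2)$ when $F_1\in S^{[1]}$ and $F_2\in S^{[2]}$ involve disjoint sets of variables. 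The special case $n_2=0$, where $F_2$ is a single $d$-th power in a new variable, is already known (this is \cite[Proposition 3.1]{CCC}, quoted above), and it should serve both as a base case and as a building block.

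For the reverse inequality I would argue through the Apolarity Lemma (Lemma \ref{Apolarity Lemma}). Let $\XX\subset\PP^{n_1+n_2+1}$ be a minimal set of reduced points apolar to $F$, so $|\XX|=\rk(F)$ and $I_\XX\subseteq F^\perp$. Let $\Lambda_1=V(X_{2,0},\ldots,X_{2,n_2})$ and $\Lambda_2=V(X_{1,0},\ldots,X_{1,n_1})$ be the two coordinate linear subspaces, of dimensions $n_1$ and $n_2$, and let $\pi_i$ be the linear projection away from the subspace complementary to $\Lambda_i$. The strategy is to show that $\pi_1(\XX)$ is apolar to $F_1$ and $\pi_2(\XX)$ is apolar to $F_2$, and that no point of $\XX$ can be charged to both $F_1$ and $F_2$: a point of $\XX$ lying on $\Lambda_1$ projects to a point useless for decomposing $F_2$ and vice versa. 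If this can be arranged one obtains $\rk(F_1)+\rk(F_2)\le|\pi_1(\XX)|+|\pi_2(\XX)|\le|\XX|=\rk(F)$, as desired.

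The hard part — and the reason this is stated as a conjecture rather than a theorem — is precisely the middle step: controlling the apolarity of the projected configurations. Verifying $I_{\pi_i(\XX)}\subseteq F_i^\perp$ is a delicate ideal-theoretic matter, because projecting may collapse several points of $\XX$ onto one point of $\Lambda_i$, and a smaller configuration need not impose enough conditions to remain apolar to $F_i$. I would therefore not expect an unconditional argument; instead I would isolate a Waring-loci hypothesis — roughly, that $\caF_F\cap\Lambda_i=\caF_{F_i}$ for each $i$, so that a decomposition of $F$ may be started by picking a point of $\caW_{F_i}\subseteq\caW_F$, subtracting the corresponding $d$-th power, lowering $\rk(F_i)$ by one and hence, inductively, $\rk(F)$ by one — and then check this hypothesis case by case using the explicit descriptions of the Waring loci obtained in Section \ref{Results Section} (monomials via Theorem \ref{monomials Thm}, binary forms via Theorem \ref{binaryTHM}, the reducible families via Theorem \ref{reducibleTHM}, and so on), running the induction on $\rk(F)$. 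This reduces the full conjecture to exactly those families where the Waring locus is understood, which is the scope announced in the abstract.
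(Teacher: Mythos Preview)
This statement is a \emph{conjecture}: the paper does not prove it, and indeed it remains open in general. You seem to realise this midway through, but the framing as a ``proof proposal'' is misleading. The paper's actual contribution is to formulate the stronger Conjecture \ref{waringlociconj} (on Waring loci), show it implies Strassen's conjecture, and then verify it for specific families via Proposition \ref{waringdecPROP}. That proposition is the paper's engine, and its mechanism is quite different from either of your sketches: one assumes each $F_i$ admits a linear derivation $\partial_i\in T^{[i]}$ with $\rk(\partial_i\circ F_i)=\rk(F_i)$, forms the hyperplane $t=\sum\alpha_i\partial_i$, and uses the colon ideal $I_\XX:(t)\subset(t\circ F)^\perp$ to force the minimal apolar set $\XX$ off that hyperplane; varying the $\alpha_i$ then shows $\XX$ splits cleanly into pieces lying in the coordinate subspaces $\PP^{n_i}$. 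No projection of points is involved, and no induction on $\rk(F)$.

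Your two sketches each have a genuine gap. The projection idea you already flag: there is no reason $I_{\pi_i(\XX)}\subset F_i^\perp$, and points can collapse. The induction-via-Waring-loci idea is subtly circular. Picking $[L]\in\caW_{F_i}$ gives a specific $\lambda$ with $\rk(F_i-\lambda L^d)=\rk(F_i)-1$; applying the inductive hypothesis to $(F_1-\lambda L^d)+F_2+\cdots$ yields $\rk(F-\lambda L^d)=\sum_j\rk(F_j)-1$. But this only gives $\rk(F)\le\rk(F-\lambda L^d)+1=\sum_j\rk(F_j)$, which is the trivial direction. To get the reverse you would need $\rk(F)=\rk(F-\lambda L^d)+1$, i.e.\ that this particular $\lambda L^d$ occurs in a minimal decomposition of $F$; knowing merely $[L]\in\caW_F$ guarantees this for \emph{some} $\lambda'$, not for the $\lambda$ coming from $F_i$. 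Matching the two is essentially Conjecture \ref{strassendecconj} again. The paper avoids this trap precisely by using the derivation trick to locate the whole apolar set at once rather than peeling off one summand at a time.
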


\begin{conjecture}\label{strassendecconj}
If $F = \sum_{i=1}^s F_i\in S$ is a degree $d\geq 3$ form such that $F_i \in S^{[i]}$ for all
 $i=1,\ldots,s$, then any minimal Waring decomposition of $F$ is a sum of minimal Waring decompositions of the forms $F_i$.
\end{conjecture}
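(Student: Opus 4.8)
The plan is to run everything through the Apolarity Lemma (Lemma \ref{Apolarity Lemma}). By that lemma a minimal Waring decomposition of $F=F_1+\dots+F_s$ is the same datum as a reduced set of points $\XX\subset\PP(S_1)$ with $I_\XX\subset F^\perp$ and $|\XX|=\rk(F)$, so the statement to be proved becomes: \emph{every} such $\XX$ splits as a disjoint union $\XX=\XX_1\sqcup\dots\sqcup\XX_s$, where $\XX_i$ lies in the coordinate subspace $\Lambda_i:=\PP(S^{[i]}_1)\subset\PP(S_1)$, $I_{\XX_i}\subset F_i^\perp$, and $|\XX_i|=\rk(F_i)$. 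I will work under the assumption that the rank identity of Conjecture \ref{strassenconj} holds, i.e. $\rk(F)=\sum_i\rk(F_i)$; this is exactly the range of cases in which the paper can say something, and it is built into the statement of Conjecture \ref{strassendecconj} via the phrase ``sum of minimal Waring decompositions''.

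First I would record the structure of $F^\perp$ for a sum in separated variables. Writing $\pi_i\colon T\to T^{[i]}$ for the ring map setting to zero every variable outside the $i$-th block, the elementary identity $g\circ F=\sum_i(\pi_i g)\circ F_i$ — valid since $\partial_{X_{k,\ell}}F_i=0$ for $k\ne i$ — shows that in every degree $j<d$ one has $g\in F^\perp$ iff $\pi_i(g)\in F_i^\perp$ for all $i$, while in degree $d$ there is one further linear condition $\sum_i(\pi_i g)\circ F_i=0$; equivalently $A_F$ is the Gorenstein connected sum of the $A_{F_i}$ and $\HF(A_F,j)=\sum_i\HF(A_{F_i},j)$ for $0<j<d$. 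I would extract two consequences for repeated use: (a) $F^\perp\cap T^{[i]}=F_i^\perp$ in positive degrees, so any reduced $\Gamma\subset\Lambda_i$ with $I_\Gamma\subset F^\perp$ is automatically apolar to $F_i$; (b) linearly projecting $\XX$ from $\langle\Lambda_j:j\ne i\rangle$ onto $\Lambda_i$ corresponds on coordinate rings to the inclusion $T^{[i]}\hookrightarrow T$, so the image $\pi_i(\XX)$ has ideal $I_\XX\cap T^{[i]}\subset F^\perp\cap T^{[i]}=F_i^\perp$; hence $\pi_i(\XX)$ is apolar to $F_i$ and $\rk(F_i)\le|\pi_i(\XX)|$.

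The next step reduces the conjecture to the single assertion that $\XX$ has no point lying off $\Lambda_1\cup\dots\cup\Lambda_s$. Indeed, assume this and set $\XX_i=\XX\cap\Lambda_i$. Restricting the decomposition $F=\sum_{P\in\XX}c_PL_P^d$ to the $i$-th block of variables annihilates every term coming from a point of $\XX_j$, $j\ne i$, and leaves $F_i=\sum_{P\in\XX_i}c_PL_P^d$; since $\XX$ is minimal for $F$ each $c_P\ne0$ and the forms $L_P$, $P\in\XX_i$, are pairwise non-proportional, so this is an honest Waring decomposition and $\rk(F_i)\le|\XX_i|$. Summing over $i$ and using $\rk(F)=\sum_i\rk(F_i)$ forces $\sum_i\rk(F_i)\le\sum_i|\XX_i|=|\XX|=\rk(F)$, hence $|\XX_i|=\rk(F_i)$ for all $i$, which is precisely Conjecture \ref{strassendecconj}.

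The hard part — and, I expect, the real obstacle — is killing the ``mixed'' points, i.e. proving $\XX\subset\bigcup_i\Lambda_i$. The cheap maneuvers from (a)–(b) only yield $|\XX\cap\langle\Lambda_j:j\ne i\rangle|\le\sum_{j\ne i}\rk(F_j)$ for each $i$, and summing these (or comparing $\sum_i|\pi_i(\XX)|$ with $\sum_{P\in\XX}\#\{\text{blocks met by }P\}$) always collapses to the tautology $\sum_P\#\{\text{blocks met by }P\}\ge|\XX|$, so no contradiction with a point in two blocks is produced. To get more one must feed in extra structure, exactly as in the concrete families of the paper. One mechanism is the apolarity-quotient trick used for Theorem \ref{reducibleTHM}: from $I_\XX:(\partial)\subset(\partial\circ F)^\perp$, choosing $\partial$ so that the rank of $\partial\circ F$ does not drop (which a Strassen-type statement for the $F_i$ and their derivatives makes possible) pins the surviving points to a proper subspace, and iterating over the blocks excludes points off $\bigcup_i\Lambda_i$. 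A second mechanism, available when the $F_i^\perp$ are explicit — monomials (Theorem \ref{monomials Thm}) and, more relevantly, coprime monomials with large exponents (Theorem \ref{waringdegTHM}) — is to describe \emph{all} minimal reduced point sets contained in $F^\perp$ directly from Step 1 and check that they are already supported on $\bigcup_i\Lambda_i$. In full generality this step forces the equality $|\XX\cap\Lambda_i|=\rk(F_i)$, which by the reduction above is equivalent to the still-open rank identity of Conjecture \ref{strassenconj}; so the honest target is to prove Conjecture \ref{strassendecconj} under hypotheses guaranteeing a robust form of Strassen's conjecture together with sufficient control on $\caF_{F_i}$.
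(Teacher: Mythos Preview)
The statement you are attempting to prove is Conjecture~\ref{strassendecconj}, and the paper does \emph{not} prove it: it is presented as an open conjecture, shown to be equivalent to Conjecture~\ref{waringlociconj} in the subsequent Lemma, and then established only under the additional hypotheses of Proposition~\ref{waringdecPROP} (existence of a rank-preserving linear derivation $\partial_i$ on each $F_i$, plus Strassen for both $\sum F_i$ and $\sum\partial_iF_i$). So there is no ``paper's own proof'' of the bare statement to compare against, and your proposal correctly does not pretend to supply one.

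Your reduction step---that the conjecture is equivalent to the assertion $\XX\subset\bigcup_i\Lambda_i$ for every minimal apolar $\XX$---is exactly right and matches the paper's Lemma showing the equivalence of Conjectures~\ref{strassendecconj} and~\ref{waringlociconj}; your restriction-to-a-block argument is the same ``set the other variables to zero'' move used there. Your analysis of the obstruction is also accurate: the colon-ideal/apolarity trick $I_\XX:(\partial)\subset(\partial\circ F)^\perp$ you describe is precisely the engine of Proposition~\ref{waringdecPROP}, and the paper needs the three numbered hypotheses there for exactly the reason you identify, namely that without a rank-preserving derivation on each block one cannot force the mixed points off. Where you say ``in full generality this step forces the equality $|\XX\cap\Lambda_i|=\rk(F_i)$, which \dots\ is equivalent to the still-open rank identity of Conjecture~\ref{strassenconj}'', note this slightly understates the difficulty: even granting Strassen's rank identity, the paper still needs the derivation hypothesis (and Strassen again for the derived forms) to exclude mixed points, so Conjecture~\ref{strassendecconj} is, as far as the paper shows, strictly stronger than Conjecture~\ref{strassenconj}. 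In short, your proposal is a correct diagnosis rather than a proof, and it is aligned with---indeed essentially reconstructs---the paper's own framework around Proposition~\ref{waringdecPROP}.
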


\begin{remark}
Conjecture \ref{strassendecconj} also appears in \cite{T2016} where, as in Proposition \ref{waringdecPROP}, sufficient conditions are presented which imply the conjecture. The sufficient conditions in \cite{T2016} are different from the one we present in this paper. Moreover, as far as we can tell, the families of Lemma \ref{linearderivativelemma} are not obtained in \cite{T2016}.
\end{remark}

In view of Conjecture \ref{strassendecconj} it is natural to formulate the following conjecture in term of Waring loci. As already explained in Remark \ref{essential variables}, we look at
$\caW_{F_i }\subset \PP^{n_i}_{X_{i,0},\ldots,X_{i,n_i}} \subset \PP^{N}$.

\begin{conjecture}\label{waringlociconj}
If $F = \sum_{i=1}^s F_i\in S$ is a degree $d\geq 3$ form such that $F_i \in S^{[i]}$ for all
 $i=1,\ldots,s$, then $$\caW_F=\bigcup_{i=1,\ldots,r}\caW_{F_i}\subset \PP^N,~\text{ where }N = n_1+\ldots+n_s+s-1.$$
\end{conjecture}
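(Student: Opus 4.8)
\emph{Overview.} The plan is to reduce Conjecture \ref{waringlociconj} to the decomposition form of Strassen's conjecture (Conjecture \ref{strassendecconj}), and to prove the latter — hence the former — under a sufficient hypothesis on the summands $F_i$, following the apolarity strategy already used for Theorem \ref{reducibleTHM}. One of the two inclusions is essentially free, while the other carries all the content.

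\emph{The easy inclusion.} First I would prove $\bigcup_i \caW_{F_i}\subseteq\caW_F$. Fix $[L]\in\caW_{F_i}$, so $L\in S^{[i]}_1$ and there is a minimal decomposition $F_i=L^d+L_2^d+\cdots+L_{r_i}^d$ with $r_i=\rk(F_i)$; adjoining minimal decompositions of the remaining $F_j$ produces a decomposition of $F$ with $\sum_j\rk(F_j)$ summands, one of which is $L^d$. Whenever $\rk(F)=\sum_j\rk(F_j)$, i.e. whenever the rank version, Conjecture \ref{strassenconj}, is known for this $F$, that decomposition is minimal and $[L]\in\caW_F$. So this inclusion is available in every case in which Strassen's rank equality is. (The hypothesis $d\geq 3$ is genuinely needed: for $d=2$ the identity $x^2+y^2=\tfrac12(x+y)^2+\tfrac12(x-y)^2$ already violates both \ref{strassendecconj} and \ref{waringlociconj}.)

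\emph{The hard inclusion, via \ref{strassendecconj} and apolarity.} For $\caW_F\subseteq\bigcup_i\caW_{F_i}$, note that if Conjecture \ref{strassendecconj} holds for $F$ then any $[L]\in\caW_F$ sits in some minimal decomposition of $F$, hence in the block of that decomposition belonging to a single $F_i$, so $L\in S^{[i]}_1$ and $[L]\in\caW_{F_i}$; and since \ref{strassendecconj} also forces $\rk(F)=\sum_i\rk(F_i)$, the easy inclusion is covered too. Thus the whole statement follows from \ref{strassendecconj}: no minimal decomposition of $F$ can mix variables across blocks. To attack this I would use Lemma \ref{Apolarity Lemma}: a minimal decomposition is a reduced $\XX\subset\PP^N$ with $I_\XX\subseteq F^\perp$ and $|\XX|=\rk(F)$, and the goal is to show every point of $\XX$ lies on one of the coordinate subspaces $\PP^{n_i}$, the induced partition being apolar sets for the $F_i$. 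The mechanism is to cut down by generic differentiations one block at a time: for $\ell\in T^{[i]}_1$ the ideal $I_\XX:(\ell)$ defines the points of $\XX$ off $\{\ell=0\}$ and lies in $F^\perp:(\ell)=(\ell\circ F)^\perp$, while $\ell\circ F=(\ell\circ F_i)+\sum_{j\neq i}F_j$ is again a disjoint sum of the same shape; if one controls $\rk(\ell\circ F_i)$ for generic $\ell$ — the role of a linear-derivative hypothesis such as the forthcoming Lemma \ref{linearderivativelemma} and the sufficient conditions of Proposition \ref{waringdecPROP} — then a counting argument as in the proof of Theorem \ref{reducibleTHM} forces each hyperplane removal to cost exactly the predicted number of points, pinning $\XX$ inside $\bigcup_i\PP^{n_i}$ and splitting it into minimal apolar sets for the $F_i$. (Formally \ref{waringlociconj} is weaker than \ref{strassendecconj}, since it records only which linear forms occur, not how they group; but the obstruction is the same.)

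\emph{Main obstacle.} The sticking point is that this bookkeeping requires exact values of $\rk(\ell\circ F_i)$ for generic $\ell$ together with enough control of the Hilbert function of a reduced apolar $\XX$, and such information is available only for special families (monomials, binary forms, the forms of Theorem \ref{reducibleTHM}, and forms with a well-understood apolar ideal); there is no general device to exclude an ``exotic'' reduced apolar set to $F$ that is not assembled from apolar sets of the $F_i$. So the realistic target is to isolate a clean sufficient condition on the $F_i$ — a condition on how ranks behave under linear differentiation, or on the generation degree of $F_i^\perp$ — prove \ref{strassendecconj}, and hence \ref{waringlociconj}, under it, and then check that condition for the families of interest. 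The unconditional statement looks genuinely open, for the same reasons Strassen's conjecture itself is.
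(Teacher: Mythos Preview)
Your proposal is essentially the paper's own treatment, and you correctly recognize that the statement is a conjecture, not a theorem: the paper does not prove Conjecture~\ref{waringlociconj} in general but only under the sufficient hypothesis you describe (existence of a linear derivation $\partial_i\in T^{[i]}$ with $\rk(\partial_i\circ F_i)=\rk(F_i)$, together with Strassen additivity for $F$ and for $\sum_i\partial_i F_i$), which is exactly Proposition~\ref{waringdecPROP}, verified for concrete families in Lemma~\ref{linearderivativelemma} and Theorem~\ref{waringdegTHM}. Your reduction to Conjecture~\ref{strassendecconj} and the apolarity/colon-ideal bookkeeping you sketch match the paper's argument line by line.

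One small correction: you write that Conjecture~\ref{waringlociconj} is ``formally weaker'' than Conjecture~\ref{strassendecconj}. In fact they are equivalent (this is Lemma~5.4 in the paper), and your own paragraph on the hard inclusion already contains the missing implication: if $\caW_F=\bigcup_i\caW_{F_i}$, then in any minimal decomposition $F=\sum_j L_j^d$ each $L_j$ lies in some $S^{[i(j)]}$; specializing the variables outside $S^{[i]}$ to zero yields a decomposition of each $F_i$, and minimality on both sides forces these to be minimal decompositions of the $F_i$, giving Conjecture~\ref{strassendecconj}. So the two conjectures stand or fall together, and your assessment that the unconditional statement is genuinely open is correct.
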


\begin{remark}
 Note that Conjectures \ref{strassendecconj} and \ref{waringlociconj} are false in degree two.  For example, let $F = x^2 - 2yz$. The rank of $F$ is three, but it is easy to find a Waring decomposition of $F$ that is not the sum of $x^2$ plus a Waring decomposition of the monomial $yz$; for example
 $$
  F = (x+y)^2 + (x+z)^2 - (x+y+z)^2.
 $$
\end{remark}

\begin{lemma}
Conjecture \ref{strassendecconj} and Conjecture \ref{waringlociconj} are equivalent and they imply Strassen's conjecture for $d\geq 3$.
\end{lemma}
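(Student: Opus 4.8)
The plan is to establish, for each fixed decomposition $F=F_1+\cdots+F_s$ with $F_i\in S^{[i]}$, the three pointwise implications: Conjecture~\ref{strassendecconj} $\Rightarrow$ Strassen's conjecture, Conjecture~\ref{strassendecconj} $\Rightarrow$ Conjecture~\ref{waringlociconj}, and Conjecture~\ref{waringlociconj} $\Rightarrow$ Conjecture~\ref{strassendecconj}. The first two show that \ref{strassendecconj} implies both \ref{waringlociconj} and Strassen, the third closes the equivalence, and then \ref{waringlociconj} $\Rightarrow$ \ref{strassendecconj} $\Rightarrow$ Strassen gives the last assertion. Two elementary facts will be used throughout. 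First, the sum $\bigoplus_{i=1}^{s}S^{[i]}_d\subseteq S_d$ is direct: a monomial of $S^{[i]}_d$ involves only the variables $x_{i,0},\ldots,x_{i,n_i}$ and hence lies in no $S^{[j]}_d$ with $j\neq i$, so $\sum_i H_i=0$ with $H_i\in S^{[i]}_d$ forces $H_i=0$ for all $i$. Second, by Proposition~\ref{prop:essential_variables} every linear form occurring in a minimal Waring decomposition of $F_i$ lies in $\CC[x_{i,0},\ldots,x_{i,n_i}]$, so $\caW_{F_i}$ sits inside the coordinate subspace $\PP^{n_i}\subset\PP^N$ attached to the $i$-th block of variables, and these $s$ subspaces are pairwise disjoint in $\PP^N$.

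First I would deduce Strassen's conjecture for $F$ from Conjecture~\ref{strassendecconj}: a minimal Waring decomposition of $F$ exists, and \ref{strassendecconj} writes it as a disjoint union of minimal Waring decompositions of the $F_i$; counting summands gives $\rk(F)=\sum_i\rk(F_i)$. Assuming \ref{strassendecconj}, and hence Strassen for $F$, I would then prove \ref{waringlociconj}. If $[L]\in\caW_F$, choose a minimal Waring decomposition of $F$ in which $L$ occurs; by \ref{strassendecconj} it splits as $\bigsqcup_i D_i$ with $D_i$ a minimal Waring decomposition of $F_i$, and $L$ appears in some $D_i$, so $[L]\in\caW_{F_i}$; thus $\caW_F\subseteq\bigcup_i\caW_{F_i}$. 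Conversely, if $[L]\in\caW_{F_i}$, take a minimal Waring decomposition $D_i$ of $F_i$ containing $L$ and minimal Waring decompositions $D_j$ of $F_j$ for $j\neq i$; concatenating the $D_j$ produces a Waring decomposition of $F$ with $\sum_j\rk(F_j)=\rk(F)$ summands, hence a minimal one, in which $L$ occurs, so $[L]\in\caW_F$. This gives $\bigcup_i\caW_{F_i}\subseteq\caW_F$.

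For the reverse implication, which will also re-prove Strassen, I would start from an arbitrary minimal Waring decomposition $F=\sum_{k=1}^{r}L_k^d$, $r=\rk(F)$. Each $[L_k]$ lies in $\caW_F=\bigcup_i\caW_{F_i}$, and since the subspaces $\PP^{n_i}$ are pairwise disjoint there is a well-defined index $\sigma(k)$ with $[L_k]\in\caW_{F_{\sigma(k)}}\subseteq\PP^{n_{\sigma(k)}}$, i.e. $L_k\in S^{[\sigma(k)]}_1$. Setting $G_i=\sum_{\sigma(k)=i}L_k^d\in S^{[i]}_d$, we get $\sum_i(G_i-F_i)=F-F=0$ with $G_i-F_i\in S^{[i]}_d$, so by directness $G_i=F_i$ for all $i$. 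Hence $F_i=\sum_{\sigma(k)=i}L_k^d$ is a Waring decomposition of $F_i$ with $|\sigma^{-1}(i)|$ summands, so $|\sigma^{-1}(i)|\geq\rk(F_i)$; summing over $i$ yields $r=\sum_i|\sigma^{-1}(i)|\geq\sum_i\rk(F_i)\geq\rk(F)=r$, forcing all inequalities to be equalities. Therefore $\rk(F)=\sum_i\rk(F_i)$ and $|\sigma^{-1}(i)|=\rk(F_i)$ for each $i$, so the given decomposition of $F$ is a sum of minimal Waring decompositions of the $F_i$; this is Conjecture~\ref{strassendecconj}.

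All the content is in the two elementary facts of the first paragraph: the only real point is to make sure the coordinate blocks genuinely separate the summands of any decomposition of $F$, which is exactly where Proposition~\ref{prop:essential_variables} (to locate $\caW_{F_i}$ inside $\PP^{n_i}$) and the directness of $\bigoplus_i S^{[i]}_d$ (to recover $G_i=F_i$ from $\sum_i(G_i-F_i)=0$) are needed. Beyond that, everything is bookkeeping with summand counts and the trivial inequality $\rk(F)\leq\sum_i\rk(F_i)$, so I expect no genuine obstacle; the degree hypothesis $d\geq 3$ is inherited from the statements of Conjectures~\ref{strassendecconj} and~\ref{waringlociconj} rather than used in the argument.
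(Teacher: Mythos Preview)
Your argument is correct and follows essentially the same route as the paper's proof: the paper also derives the ``obvious'' implications from Conjecture~\ref{strassendecconj} and then, assuming Conjecture~\ref{waringlociconj}, uses that each $L_k$ lies in some $\caW_{F_{j(k)}}$ (hence in $S^{[j(k)]}_1$), specializes the remaining variables to zero to obtain a decomposition of each $F_j$, and invokes the trivial inequality $\rk(F)\leq\sum_i\rk(F_i)$ to force minimality. Your version is more explicit in justifying that the index $j(k)$ is well-defined (via Proposition~\ref{prop:essential_variables} and disjointness of the coordinate subspaces) and in replacing ``set the other variables to zero'' by the directness of $\bigoplus_i S^{[i]}_d$, but these are the same argument spelled out with greater care.
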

\begin{proof} Clearly Conjecture \ref{strassendecconj} implies both Conjecture \ref{strassenconj} and Conjecture \ref{waringlociconj}. To complete the proof, we assume that Conjecture \ref{waringlociconj} holds. If $F=\sum_i L_i^d$ is a minimal decomposition of $F$, then each $L_i$ appears in a minimal decomposition of $F_{j(i)}$, thus $L_i$ only involves the variables of $S^{[j(i)]}$. Setting all the variables not in $S^{[j(i)]}$ equal zero in the expression $F=\sum_i L_i^d$, we get a decomposition of $F_{j(i)}$. Note that all the obtained decompositions of the $F_j$ are minimal, otherwise $\rk(F)>\sum_i \rk(F_i)$. Hence Conjecture \ref{strassendecconj} is proved by assuming Conjecture \ref{waringlociconj}.\end{proof}

In order to study our conjectures we prove the following.

\begin{proposition}\label{waringdecPROP}
 Let $F = \sum_{i=1}^s F_i\in S$ be a form such that $F_i \in S^{[i]}$ for all
 $i=1,\ldots,s$. If the following conditions hold

\begin{enumerate}
\item for each $1\leq i\leq s$ there exists a linear derivation $\partial_i\in T^{[i]}$ such that
\[\rk(\partial_i \circ F_i)=\rk(F_i),\]

\item Strassen's conjecture holds for $F_1+\ldots+F_s$,

\item Strassen's conjecture holds for $\partial_1F_1+\ldots+\partial_s F_s$,

\end{enumerate}
then $F$ satisfies Conjecture \ref{waringlociconj}.
\end{proposition}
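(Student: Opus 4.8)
The statement asserts that, under hypotheses (1)--(3), one has $\caW_F=\bigcup_{i=1}^s\caW_{F_i}$ in $\PP^N$ (Conjecture \ref{waringlociconj}), and the plan is to prove the two inclusions. The inclusion $\bigcup_i\caW_{F_i}\subseteq\caW_F$ uses only hypothesis (2): given $[L]\in\caW_{F_j}$, I would take a minimal Waring decomposition of $F_j$ in which $L^d$ appears, together with minimal Waring decompositions of the remaining $F_i$; their union is a Waring decomposition of $F=\sum_iF_i$ with $\sum_i\rk(F_i)$ summands, and by (2) this number equals $\rk(F)$, so the decomposition is minimal and hence $[L]\in\caW_F$.

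For the reverse inclusion $\caW_F\subseteq\bigcup_i\caW_{F_i}$, I would fix $[L]\in\caW_F$ and a minimal decomposition $F=L_1^d+\dots+L_r^d$ with $[L_1]=[L]$, where $r=\rk(F)=\sum_i\rk(F_i)$ by (2). Setting $\partial=\partial_1+\dots+\partial_s$, disjointness of the variable blocks gives $\partial\circ F=\sum_i\partial_i\circ F_i$; by (1) each $\rk(\partial_i\circ F_i)=\rk(F_i)$, hence $\sum_i\rk(\partial_i\circ F_i)=\sum_i\rk(F_i)=r$, and applying Strassen's conjecture to $\partial_1\circ F_1+\dots+\partial_s\circ F_s$ via (3) yields $\rk(\partial\circ F)=\sum_i\rk(\partial_i\circ F_i)=r$. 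On the other hand $\partial\circ F=d\sum_{k=1}^r(\partial\circ L_k)L_k^{d-1}$ is a Waring decomposition with at most $r$ summands, so the equality $\rk(\partial\circ F)=r$ forces every scalar $\partial\circ L_k$ to be nonzero and this to be a \emph{minimal} Waring decomposition of $\partial\circ F$; in particular $L_1^{d-1},\dots,L_r^{d-1}$ are linearly independent.

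Next, for each $i$ I would apply the single derivation $\partial_i$, which annihilates $F_j$ for $j\neq i$: then $\partial_i\circ F=\partial_i\circ F_i=:G_i\in S^{[i]}$, while at the same time $\partial_i\circ F=d\sum_k(\partial_i\circ L_k)L_k^{d-1}$ is a Waring decomposition of $G_i$ supported on $T_i:=\{k:\partial_i\circ L_k\neq0\}$, and the $T_i$ cover $\{1,\dots,r\}$ because every $\partial\circ L_k=\sum_i\partial_i\circ L_k$ is nonzero. The key step is to promote this to: the $T_i$ are pairwise disjoint, $|T_i|=\rk(F_i)$, and hence $G_i=d\sum_{k\in T_i}(\partial_i\circ L_k)L_k^{d-1}$ is a minimal Waring decomposition of $G_i$; here I would combine the minimality of the decomposition of $\partial\circ F$ established above with the equalities $\rk(G_i)=\rk(F_i)$ from (1), tracking how the $r$ powers $L_k^{d-1}$ distribute among the forms $G_i$. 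Granting this, since $G_i$ involves only the variables of $S^{[i]}$, Proposition \ref{prop:essential_variables} shows that each $L_k$ with $k\in T_i$ is a linear form in the variables of $S^{[i]}$ alone. Consequently every $L_k$ lives in a single block; regrouping $F=\sum_{i=1}^s\big(\sum_{k\in T_i}L_k^d\big)$ and comparing with $F=\sum_iF_i$ gives $F_i=\sum_{k\in T_i}L_k^d$, which is minimal since its length is $|T_i|=\rk(F_i)$. In particular $L=L_1$ belongs to some $S^{[i_0]}$ and appears in a minimal Waring decomposition of $F_{i_0}$, so $[L]\in\caW_{F_{i_0}}\subseteq\bigcup_i\caW_{F_i}$, completing the proof.

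The step I expect to be the main obstacle is exactly the rank/counting argument in the previous paragraph. One direction of the needed inequalities is routine: $|T_i|\ge\rk(G_i)$ because $G_i$ admits a Waring decomposition with $|T_i|$ summands, and $\rk(F_i)\le\#\{k:L_k\text{ involves the }S^{[i]}\text{-variables}\}$ by specializing the decomposition of $F$ to the $i$-th block. But establishing that no $L_k$ is genuinely shared between two blocks — the opposite inequalities — is what forces one to use hypotheses (1) and (3) jointly (linear independence of the $L_k^{d-1}$ alone is not enough, since a decomposition into linearly independent powers need not be minimal), and it is also where $d\ge3$ is essential: already in degree two hypothesis (1) can fail — for instance for the summand $yz$ in $F=x^2-2yz$ — in line with the remark that Conjecture \ref{waringlociconj} is false there.
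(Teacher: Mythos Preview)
Your overall strategy matches the paper's: prove the easy inclusion using (2), and for the hard inclusion take a minimal decomposition $F=\sum_k L_k^d$, use the derivations to show that each $L_k$ lives in a single block $S^{[i]}$, then specialize. You also correctly isolate the crux of the argument --- the disjointness of the sets $T_i=\{k:\partial_i\circ L_k\neq0\}$ --- and explicitly flag it as the obstacle. But you do not actually supply the argument, and the ingredients you list (``combine the minimality of the decomposition of $\partial\circ F$ with $\rk(G_i)=\rk(F_i)$'') are not enough: from $|T_i|\ge\rk(G_i)=\rk(F_i)$ and $\bigcup_i T_i=\{1,\dots,r\}$ you only get $\sum_i|T_i|\ge r$, which is the wrong inequality for disjointness.

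The missing idea --- and the one the paper uses --- is to run your second paragraph not just for the single $\partial=\partial_1+\dots+\partial_s$ but for \emph{every} $t=\alpha_1\partial_1+\dots+\alpha_s\partial_s$ with all $\alpha_i\neq0$. Hypotheses (1) and (3) still give $\rk(t\circ F)=\sum_i\rk(\alpha_i\,\partial_i\circ F_i)=\sum_i\rk(F_i)=r$ (rescaling a summand does not affect its rank, so Strassen for $\sum_i\partial_iF_i$ is equivalent to Strassen for $\sum_i\alpha_i\partial_iF_i$). Then $t\circ F=d\sum_k(t\circ L_k)L_k^{d-1}$ has rank $r$, forcing $t\circ L_k=\sum_i\alpha_i(\partial_i\circ L_k)\neq0$ for every $k$ and every such choice of $(\alpha_i)$. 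If for some $k$ either none or at least two of the scalars $\partial_i\circ L_k$ were nonzero, one could choose the $\alpha_i$ (all nonzero) to make this sum vanish; hence exactly one $\partial_i\circ L_k$ is nonzero for each $k$, which is precisely the disjointness of the $T_i$. The paper states this as a Claim (phrased in apolar-set language rather than in terms of the decomposition, but it is the same argument), and from there your final paragraph goes through verbatim.
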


\begin{proof}
 Let's consider the linear form $t = \alpha_1\partial_1+\ldots+\alpha_s\partial_s$, with $\alpha_i \neq 0$ for all $i=1,\ldots,s$.

 Let $I_{\XX} \subset F^{\perp}$ be the ideal of a set of points giving a minimal Waring decomposition of $F$, i.e. the cardinality
 of $\XX$ is equal to $\rk(F)$. Thus, $I_{\XX} : (t)$ is the ideal of the points of $\XX$ which are outside the linear space
 $t = 0$. We can look at $$I_{\XX} : (t) \subset F^{\perp} : (t) = \left( t \circ F\right)^{\perp}.$$

  By the assumptions we get that $\rk(F) = \rk( t \circ F)$, hence the set of points corresponding to $I_{\XX} : (t)$ has cardinality equal to $\rk (F)$; it follows that $\XX$ does not have points on the hyperplane ${t=0}$.

For the sake of simplicity, if necessary, we rename and reorder the variables in such a way that $\partial_i=X_{i,0}$.

 \medskip
 {\it Claim.} If $P = [a_{1,0}:\ldots:a_{1,n_1}:\ldots:a_{s,0}:\ldots:a_{s,n_s}]$ belongs to $\caW_F$ then in the set $\{a_{1,0},\ldots,a_{s,0}\}$ there is {\it exactly} one non-zero coefficient.
 \medskip

 The claim follows from the first part, since if we have either no or at least two non-zero coefficients in the set $\{a_{1,0},\ldots,a_{s,0}\}$ it is easy to find a linear space $\{ t = 0 \}$ containing the point $P$ and contradicting the assumption
 that it belongs to the Waring locus of $F$.

 Let's consider $\XX_i = \XX \smallsetminus \{x_{i,0} = 0\}$, for all $i=1,\ldots,s$. Similarly as above, by looking at $$I_{\XX_i} = I_{\XX} : (\partial_i) \subset F^{\perp} : (\partial_i) = \left({\partial_i \circ F_i}\right)^{\perp}$$
 we can conclude that the cardinality of each $\XX_i$ is at least $\rk(F_i)$. Moreover, by the claim, we have that the $\XX_i$'s are all pairwise disjoint. By additivity of the rank, we conclude that
 \[\XX = \bigcup_{i=1,\ldots,s} \XX_i,\]
 with $\XX_i\cap\XX_j = \emptyset$, for all $i\neq j$, and
 \[|\XX_i| = \rk(F_i),\]
 for all $i=1,\ldots,s$.

 Hence, we have that the sets $\XX_i$ give minimal Waring decompositions of the forms ${\partial_i \circ F_i}$'s and, by Proposition \ref{prop:essential_variables}, they lie in $\PP^{n_i}_{X_{i,0},\ldots,X_{i,n_i}}$, respectively. Since $\XX$ gives a minimal Waring decomposition of $F$, specializing to zero the variables not in $S^{[i]}$ we see that $\XX_i$ gives a minimal Waring decomposition of $F_i$. Hence, it follows $\caW_F\subset\bigcup_{i=1,\ldots,s}\caW_{F_i}$.

 The other inclusion is easily seen to be true.
\end{proof}

As we show in the following lemma, there are several families of forms for which we can apply Proposition \ref{waringdecPROP}.

\begin{lemma}\label{linearderivativelemma}
If $F$ is one of the following degree $d$ forms

\begin{enumerate}

\item a monomial $x_0^{d_0}\cdot\ldots\cdot x_n^{d_n}$ with $d_i\geq 2$ for $0\leq i\leq n$;

\item a binary form of less than maximal rank, i.e., $F \neq LM^{d-1}$, with $L,M$ linear forms;

\item $x_0^a(x_1^b+\ldots+x_n^b)$ with $n\geq 2$ and $a+1\geq b >2$;

\item $x_0^a(x_0^b+x_1^b+\ldots+x_n^b)$ with $n\geq 2$ and $a+1\geq b >2$;

\item $x_0^aG(x_1,\ldots,x_n)$ with $a\geq 2$ and such that $G^\perp$ is a complete intersection in $\CC[x_1,\ldots,x_n]$ generated by forms of degree at least $a+1$,
\end{enumerate}

then there exists a linear derivation $\partial$ such that
\[\rk(\partial \circ F)=\rk(F).\]
\end{lemma}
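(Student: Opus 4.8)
The plan is to treat the five families separately, in each case exhibiting a single explicit linear derivation $\partial$ and then reading off $\rk(\partial\circ F)$ from the rank formulas recorded earlier in the paper: the monomial formula of \cite{CCG} for (1); Sylvester's rule for apolar ideals of binary forms (recalled above: $F^\perp=(g_1,g_2)$ with $\deg g_1\le\deg g_2$, and $\rk(F)$ equals $\deg g_1$ if $g_1$ is square-free and $\deg g_2$ otherwise) for (2); and Propositions~4.4 and~4.9 of \cite{CCCGW} for (3)--(5). In every case $\partial$ is chosen so that $\partial\circ F$ is a form of the same shape with one numerical parameter decreased by one, in a way that leaves the rank unchanged; the only case requiring real work is (2).

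\emph{Cases (1), (3), (4), (5).} For a monomial $M=x_0^{d_0}\cdots x_n^{d_n}$ with $2\le d_0\le\cdots\le d_n$ take $\partial=X_0$; then $\partial\circ M$ is a nonzero multiple of $x_0^{d_0-1}x_1^{d_1}\cdots x_n^{d_n}$, whose least exponent is still $d_0-1\ge 1$, so \cite{CCG} gives $\rk(\partial\circ M)=\frac{1}{d_0}\cdot d_0\cdot\prod_{i\ge 1}(d_i+1)=\prod_{i\ge 1}(d_i+1)=\rk(M)$. For $F=x_0^aG(x_1,\ldots,x_n)$ as in (5) take again $\partial=X_0$: since $a\ge 2$, $\partial\circ F$ is a nonzero multiple of $x_0^{a-1}G$ with $a-1\ge 1$, and $G^\perp$ is still a complete intersection generated in degrees $\ge a+1\ge(a-1)+1$; as the rank of $x_0^cG$, for $G^\perp$ a complete intersection generated in degrees exceeding $c$, depends only on $G$ and not on $c$ (by \cite{CCCGW}), we get $\rk(\partial\circ F)=\rk(F)$. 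For the two families in (3) and (4) take instead $\partial=X_1+\cdots+X_n$; a direct computation gives, up to the nonzero scalar $b$,
\[\partial\circ\bigl(x_0^a(x_1^b+\cdots+x_n^b)\bigr)=\partial\circ\bigl(x_0^a(x_0^b+x_1^b+\cdots+x_n^b)\bigr)=x_0^a(x_1^{b-1}+\cdots+x_n^{b-1}),\]
the $x_0^b$ summand being annihilated by $\partial$. Because $b-1\ge 2$ and $a+1>b-1$, Proposition~4.9 of \cite{CCCGW} applies to the right-hand side and gives rank $(a+1)n$, which is exactly $\rk(F)=\rk(G)$ by Propositions~4.4 and~4.9 of \cite{CCCGW}.

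\emph{Case (2).} Write $F^\perp=(g_1,g_2)$ with $d_1=\deg g_1\le d_2=\deg g_2$ and $d_1+d_2=d+2$; we may assume $d\ge 3$. The hypothesis $F\ne LM^{d-1}$ means $\rk(F)<d$, which rules out the configuration ``$d_1=2$ with $g_1$ a perfect square''. For any linear $\partial$ one has $(\partial\circ F)^\perp=F^\perp:(\partial)$, a complete intersection in two variables whose two generator degrees sum to $d+1$. There are two subcases. \emph{(a)} $d_1<d_2$ and $g_1$ square-free (so $\rk(F)=d_1$): take $\partial$ not dividing $g_1$. A short Hilbert-function computation shows that $F^\perp:(\partial)$ has initial degree $d_1$ with $g_1$ lying in its minimal-degree piece, while its other generator has degree $d_2-1\ge d_1$; since $g_1$ is square-free, Sylvester's rule gives $\rk(\partial\circ F)=d_1=\rk(F)$. \emph{(b)} Otherwise: then $\rk(F)=d_2$, and $(F^\perp)_{d_1}$ contains a non-reduced form $f_0$ (namely $g_1$ itself when $g_1$ is not square-free, and a member of the pencil $(F^\perp)_{d_1}$ lying on the discriminant when $d_1=d_2$). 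Since $d_1=2$ would land us in the excluded configuration or force $d=2$, we have $d_1\ge 3$, so $f_0=L^2v$ for a linear form $L$ with $\deg v=d_1-2\ge 1$; take $\partial$ to be a linear factor of $v$. Then $f_0/\partial$ lies in $F^\perp:(\partial)$, has degree $d_1-1$, and is divisible by $L^2$, hence is not square-free; a Hilbert-function count shows it is the minimal-degree generator of $F^\perp:(\partial)$, the companion generator having degree $(d+1)-(d_1-1)=d_2$. By Sylvester's rule, $\rk(\partial\circ F)=d_2=\rk(F)$.

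\emph{Main obstacle.} Everything outside (2) is bookkeeping once the correct $\partial$ is spotted. The substance of the proof is case (2), and within it the situation $d_1=d_2$ --- that is, $d$ even and $F$ of generic rank $\frac{d}{2}+1$ --- where the naive derivations strictly lower the rank and one is forced to choose $\partial$ as a simple linear factor of a non-reduced member of the pencil $(F^\perp)_{d_1}$; one must then verify, through the Hilbert function of $F^\perp:(\partial)$, that the resulting degree-$(d_1-1)$ element really is the minimal generator and really fails to be square-free.
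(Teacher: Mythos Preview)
Your proof is correct, and in cases (1), (3), (4), (5) and (2)(a) it coincides with the paper's: same derivation $\partial$, same rank formulas invoked. The one substantive difference is your case (2)(b). For a non--square-free $f_0=L^2v\in(F^\perp)_{d_1}$ you take $\partial$ to be a linear factor of $v$, so that $f_0/\partial$ keeps the factor $L^2$ and remains non--square-free; this forces $\rk(\partial\circ F)=d_2$. The paper instead writes $f_0=l_1^{m_1}\cdots l_s^{m_s}$ with $m_1\ge2$ and takes $\partial=l_1$, the \emph{repeated} factor, getting $h_1=l_1^{m_1-1}l_2^{m_2}\cdots l_s^{m_s}$ as the low-degree generator of $(\partial\circ F)^\perp$. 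Your choice is not merely cosmetic: when $m_1=2$ and all other $m_i=1$ (for instance $g_1=X^2Y$ with $d_1=3$), the paper's $h_1$ is square-free, and Sylvester's rule then yields $\rk(\partial\circ F)=d_1-1<d_2$, so that particular $\partial$ does not witness the lemma. Choosing a \emph{simple} factor of $f_0$, as you do, sidesteps this and makes the argument go through uniformly.
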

\begin{proof}
 {(1)} Let $F = x_0^{d_0}\cdots x_n^{d_n}$ with $d_0 \leq \ldots \leq d_n$, then we know by \cite{CCG}
 that $\rk(F) = (d_1+1)\cdots (d_n+1)$. If we let $\partial = X_0$, then $\rk(F) = \rk(\partial \circ F)$.

 \smallskip
 (2) We know that  $F^{\perp} = (g_1,g_2)$ with $\deg(g_i) = d_i$, $d_1\leq d_2$ and $d_1+d_2 = d+2$. We have to consider different cases.
 \begin{enumerate}
  \item[a)] If $d_1 < d_2$ and $g_1$ is square-free, then $\rk(F) = \deg(g_1)$. Consider any linear form $\partial \in T_1$ which is not a factor of $g_1$.  Then, $(\partial \circ F)^\perp = F^{\perp} : (\partial) = (h_1,h_2)$, with $\deg(h_1)+\deg(h_2) = d+1$. Since $\partial$ is not a factor of $g_1$, then we have that $g_1 = h_1$ and, since it is square-free, we have that $\rk(\partial \circ F) = \rk(F)$.

  \item[b)] If $d_1 < d_2$ and $g_1$ is not square-free, say $g_1 = l_1^{m_1}\cdots l_s^{m_s}$, with $m_1\geq \ldots\geq m_s$ and $m_1\geq 2$, then we have $\rk(F) = \deg(g_2)$. Fix $\partial = l_1\in T_1$. Then, since $F$ is not of the form $LM^{d-1}$, with $L,M\in S_1$, we have that $(\partial \circ F)^{\perp} = F^{\perp} : (l_1) = (h_1,h_2)$ with $\deg(h_1)+\deg(h_2) = d+1$. Since $l_1^{m_1-1}\cdots l_s^{m_s} \in (\partial \circ F)^{\perp}$, but not in $F^{\perp}$ it has to be $h_1 = l_1^{m_1-1}\cdots l_s^{m_s}$. In particular, $\rk(\partial \circ F) = \deg(h_2) =\deg(g_2) =\rk(F)$.

  \item[c)] If $d_1 = d_2$, we can always consider a non square-free element $g \in ( F^{\perp} )_{d_1}$. Indeed, if both $g_1$ and $g_2$ are square-free, then it is enough to consider one element $[g]$ lying on the intersection between the hypersurface in $\PP(S_{d_1})$ defined by the vanishing of the discriminant of polynomials of degree $d_1$ and the line passing through $[g_1]$ and $[g_2]$. Thus, $g = l_1^{m_1}\cdots l_s^{m_s}$, with $m_1\geq \ldots\geq m_s$ and $m_1\geq 2$. Fix $\partial = l_1 \in T_1$. Hence, we conclude similarly as part b).
 \end{enumerate}

 (3) If $F = x_0^a(x_1^b+\ldots+x_n^b)$ with $b,n\geq 2$ and $a+1\geq b$, then we have that $\rk(F) = (a+1)n$,
 by \cite{CCCGW}. If we set $\partial = X_1+\ldots+X_n$, then $\partial \circ F =
 x_0^a(x_1^{b-1}+\ldots+x_n^{b-1})$ and the rank is preserved.

 (4) If $F = x_0^a(x_0^b+\ldots+x_n^b)$ with $b,n\geq 2$ and $a+1\geq b$, then we have that $\rk(F) = (a+1)n$,
 by \cite{CCCGW}. If we set $\partial = X_1+\ldots+X_n$, then $\partial F =
 x_0^a(x_1^{b-1}+\ldots+x_n^{b-1})$ and the rank is preserved.

 (5) If $F = x_0^a G(x_1,\ldots,x_n)$ with $G^\perp=(g_1,\ldots,g_n)$, $a\geq 2$, and $\deg g_i\geq a+1$, we know that
 $\rk(F) = d_1\cdots d_n$, by \cite{CCCGW}. If we consider $\partial = X_0$, then we have that $\partial\circ  F =
 x_0^{a-1}G(x_1,\ldots,x_n)$ and the rank is preserved.
\end{proof}

\begin{theorem}\label{waringdegTHM}
 Let $F = \sum_{i=1}^s F_i\in S$ be a form such that $F_i \in S^{[i]}$ for all
 $i=1,\ldots,s$. If each $F_i$ is one of the following,
\begin{enumerate}
\item a monomial $x_0^{d_0}\cdot\ldots\cdot x_n^{d_n}$ with $d_i\geq 2$ for $0\leq i\leq n$;

\item a binary form of less than maximal rank, i.e., $F \neq LM^{d-1}$, with $L,M$ linear forms;

\item $x_0^a(x_1^b+\ldots+x_n^b)$ with $b,n\geq 2$ and $a+1\geq b$;

\item $x_0^a(x_0^b+x_1^b+\ldots+x_n^b)$ with $b,n\geq 2$ and $a+1\geq b$;

\item $x_0^aG(x_1,\ldots,x_n)$ with $a\geq 2$ and such that $G^\perp$ is a complete intersection in $\CC[x_1,\ldots,x_n]$ generated by forms of degree at least $a+1$,
\end{enumerate}
 then Conjecture \ref{waringlociconj} holds for $F$.
\end{theorem}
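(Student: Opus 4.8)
The plan is to obtain Theorem \ref{waringdegTHM} as an immediate application of Proposition \ref{waringdecPROP}: I would simply check that, when every summand $F_i$ is one of the five listed forms, all three hypotheses of that proposition are satisfied by $F = F_1 + \cdots + F_s$.

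Hypothesis (1) — for each $i$ a linear derivation $\partial_i \in T^{[i]}$ with $\rk(\partial_i \circ F_i) = \rk(F_i)$ — is exactly Lemma \ref{linearderivativelemma}, since the five families appearing here are precisely those treated there. So nothing new is needed: one takes $\partial_i = X_{i,0}$ for a monomial or for a form $x_0^a G$, one takes $\partial_i = X_{i,1} + \cdots + X_{i,n_i}$ for $x_0^a(x_1^b + \cdots + x_n^b)$ and for $x_0^a(x_0^b + \cdots + x_n^b)$, and one takes a suitable linear form (a linear factor of $g_1$, or a generic linear form) for a binary form of less than maximal rank.

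Hypotheses (2) and (3) ask that Strassen's conjecture hold for $F_1 + \cdots + F_s$ and for $\partial_1 \circ F_1 + \cdots + \partial_s \circ F_s$. For the first sum this is known: each $F_i$ is a monomial, a binary form, a form $x_0^a(x_1^b+\cdots+x_n^b)$, a form $x_0^a(x_0^b+\cdots+x_n^b)$, or a form $x_0^a G$ with $G^\perp$ a complete intersection generated in degrees $\ge a+1$, and Strassen's conjecture for arbitrary sums in independent sets of variables of forms drawn from these families is established in \cite{CCG} (sums of pairwise coprime monomials) and \cite{CCCGW} (the remaining ones, as well as mixed sums). For the second sum I would note that each $\partial_i \circ F_i$ is again one of these forms: a derivative of a monomial is a monomial, a derivative of a binary form is a binary form, and — as recorded in the proof of Lemma \ref{linearderivativelemma} — the chosen $\partial_i$ turns $x_0^a(x_1^b+\cdots+x_n^b)$ and $x_0^a(x_0^b+\cdots+x_n^b)$ into $x_0^a(x_1^{b-1}+\cdots+x_n^{b-1})$, and turns $x_0^a G$ into $x_0^{a-1}G$; one checks in each case that the resulting form still satisfies the hypotheses under which \cite{CCG, CCCGW} prove Strassen's conjecture (for instance that the exponent drops to $a-1 \le \min_i \deg g_i$, so that the rank formula for $x_0^{a-1}G$ still applies and gives $\rk(x_0^{a-1}G) = \rk(x_0^a G)$). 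Hence Strassen's conjecture holds for $\partial_1 \circ F_1 + \cdots + \partial_s \circ F_s$ too, and Proposition \ref{waringdecPROP} then yields that $F$ satisfies Conjecture \ref{waringlociconj}.

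The step I expect to require the most care is not any single computation but assembling the precise statements of \cite{CCG, CCCGW} covering \emph{mixed} sums taken from all five families at once (both before and after applying the $\partial_i$), and verifying that the numerical side conditions survive differentiation; once that bookkeeping is in place the theorem drops out of Proposition \ref{waringdecPROP} with no further argument.
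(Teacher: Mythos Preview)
Your proposal is correct and follows essentially the same route as the paper: verify the three hypotheses of Proposition~\ref{waringdecPROP}, with (1) given by Lemma~\ref{linearderivativelemma} and (2)--(3) by the additivity results of \cite{CCCGW} (Theorem~6.1 there), noting that the specific derivations $\partial_i$ from the proof of Lemma~\ref{linearderivativelemma} send each $F_i$ to another form still covered by those results. The paper's own proof is the same argument stated in three sentences, so your only extra content is the explicit bookkeeping you flag at the end.
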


\begin{proof}
 We check that conditions $(1)$, $(2)$, and $(3)$ of Proposition \ref{waringdecPROP} are verified.
  Condition $(1)$ holds because of Lemma \ref{linearderivativelemma} and condition $(2)$ follows from Theorem 6.1 in \cite{CCCGW}. To check condition $(3)$ we use the linear derivations $\partial_i$ appearing in the proof of Lemma \ref{linearderivativelemma}. Namely, we note that Theorem 6.1 in \cite{CCCGW} applies to the sum $\sum _i \partial_iF_i$, thus condition $(3)$ follows. The result is now proved.
\end{proof}

\section{More results and open problems}\label{Furtherresultsandopenproblems}

We prove that Conjecture \ref{waringlociconj} holds for the sum of two monomials where one of them has the lowest exponent equal to one. In this case, Lemma \ref{linearderivativelemma} does not apply and we need different methods.

\begin{theorem}\label{waringdeggeneralcuspTHM}
 Conjecture \ref{waringlociconj} is true for a form
 $$
  F = M_1+M_2 = x_{0}x_1^{a_{1}}\cdots x_{n}^{a_{n}} + y_0^{b_0}\cdots y_m^{b_m},\text{ where }\deg F~ = d \geq 3.
 $$
\end{theorem}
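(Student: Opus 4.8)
The plan is to reduce to the case $n=1$, i.e. to $F = x_0 x_1^{a_1} + y_0^{b_0}\cdots y_m^{b_m}$ in one monomial with lowest exponent one, since Lemma~\ref{linearderivativelemma} together with Proposition~\ref{waringdecPROP} already handles sums where $M_1 = x_0x_1^{a_1}\cdots x_n^{a_n}$ has more than one variable of exponent $\geq 2$, and repeated derivation $X_2^{a_2}\cdots X_n^{a_n}\circ M_1$ collapses everything but two variables. So the essential new input is a linear derivation $\partial$ on the first block with $\rk(\partial\circ M_1) = \rk(M_1)$ when $M_1 = x_0x_1^{a_1}$; but $M_1^\perp = (X_0^2, X_1^{a_1+1})$, so $\rk(M_1) = a_1+1$, and \emph{no} single linear derivation preserves this rank (differentiating by $X_0$ drops the rank to $a_1$, differentiating by $X_1$ drops it to $a_1$ as well unless $a_1=1$). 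This is exactly why Proposition~\ref{waringdecPROP} does not apply and a separate argument is needed. The strategy I would follow is the Apolarity-Lemma bookkeeping of Proposition~\ref{waringdecPROP}, but tracking cardinalities one unit more carefully.

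First I would set $T^{[1]} = \CC[X_0,\ldots,X_n]$, $T^{[2]} = \CC[Y_0,\ldots,Y_m]$ and recall $M_1^\perp = (X_0^2, X_1^{a_1+1},\ldots, X_n^{a_n+1})$, $M_2^\perp = (Y_0^{b_0+1},\ldots, Y_m^{b_m+1})$, so by \cite{CCG}, $\rk(M_1) = \prod_{i=1}^n(a_i+1) =: r_1$ and $\rk(M_2) = \frac{1}{b_0+1}\prod_{j=0}^m(b_j+1) =: r_2$, and Strassen's conjecture (Theorem~6.1 of \cite{CCCGW}) gives $\rk(F) = r_1+r_2$. Let $\XX$ be any apolar set to $F$ with $|\XX| = r_1 + r_2$. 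The key derivation is $\partial_1 = X_1\cdots X_n \in T^{[1]}$ (product of the $n$ linear-free... rather, $\partial := X_2^{a_2}\cdots X_n^{a_n}\cdot(\text{linear in }X_1)$ — here I would instead pick a \emph{single} variable $\partial_1 = X_0$ and pair it with the observation that although $\rk(X_0\circ M_1) = a_1\cdots(a_n+1)\cdot\frac{?}{}$ drops, the total drop is controlled). Concretely: apply the localization $I_\XX : (X_0)$. Since $X_0^2 \in M_1^\perp \subset F^\perp$, every point of $\XX$ lies on $V(X_0^2)$ with multiplicity, so $X_0$ is a \emph{common factor} consideration — one shows the points of $\XX$ split as $\XX = \XX_1 \sqcup \XX_2$ with $\XX_1$ giving a decomposition of $M_1$ and $\XX_2$ of $M_2$, using that no point can have all first-block coordinates zero and all second-block coordinates zero simultaneously, and that a point with \emph{mixed} support would, after setting complementary variables to zero, produce a form of rank strictly smaller than $r_1$ or $r_2$ — contradicting additivity. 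The role of $\deg F = d \geq 3$ is to rule out the degree-two coincidences in the Strassen bound.

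The step I expect to be the main obstacle is showing that a point $P \in \XX$ cannot have support meeting \emph{both} coordinate blocks nontrivially — in the setup of Proposition~\ref{waringdecPROP} this followed from having a rank-preserving derivation in \emph{each} block, which we lack for $M_1 = x_0x_1^{a_1}$. To get around this I would argue directly with the structure of $M_1^\perp$: the crucial fact is that $\rk(x_0 x_1^{a_1}\cdots x_n^{a_n})$ equals the rank of $\partial\circ M_1$ for $\partial$ ranging over \emph{generic} linear forms in $X_1,\ldots,X_n$ (which preserves the shape of the complete intersection, since $x_0$ has exponent one the $X_0^2$ generator is untouched and the quotient $M_1^\perp : (\partial)$ is again a complete intersection of the right degrees), combined with the fact that \cite{CCCGW} Theorem 6.1 applies to $\partial_1\circ M_1 + \partial_2\circ M_2$ for suitable $\partial_2$. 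Then the cardinality count $r_1 + r_2 = |\XX| \geq |\XX \setminus V(\partial)| \geq \rk(\partial\circ F) = \rk(\partial_1\circ M_1) + \rk(\partial_2\circ M_2)$ forces equality block by block, one eliminates the variable $x_0$ by a further application of $X_0$, and the argument of Proposition~\ref{waringdecPROP} goes through verbatim to conclude $\caW_F = \caW_{M_1}\cup\caW_{M_2}$. The reverse inclusion $\caW_{M_1}\cup\caW_{M_2}\subseteq\caW_F$ is the easy direction, obtained by juxtaposing minimal apolar sets of $M_1$ and $M_2$ (supported in disjoint coordinate subspaces) and invoking Strassen additivity for minimality.
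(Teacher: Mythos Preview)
Your proposal has a genuine gap at its central step. You claim that for a generic linear form $\partial\in\langle X_1,\ldots,X_n\rangle$ one has $\rk(\partial\circ M_1)=\rk(M_1)$, arguing that $M_1^\perp:(\partial)$ is again a complete intersection of the ``right degrees.'' This is false. Already in the binary case $M_1=x_0x_1^{d-1}$ (which you yourself flag as the essential case) every linear derivation strictly lowers the rank: $X_1\circ M_1$ is a scalar multiple of $x_0x_1^{d-2}$, of rank $d-1<d=\rk(M_1)$, and a generic $aX_0+bX_1$ gives $x_1^{d-2}(ax_1+b(d-1)x_0)$, again of rank $d-1$. For $n\geq 2$ the same phenomenon persists: with $M_1=x_0x_1x_2$ one has $\rk(M_1)=4$, while $(aX_1+bX_2)\circ M_1=x_0(ax_2+bx_1)$ has rank $2$ for any $a,b$. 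The colon ideal $M_1^\perp:(\partial)$ does contain $X_0^2$, but the remaining generators drop in degree, so the complete-intersection count decreases. Consequently the cardinality inequality $|\XX|\geq|\XX\setminus V(\partial)|\geq\rk(\partial\circ F)$ that you want to saturate never saturates on the $M_1$ side, and the splitting $\XX=\XX_1\sqcup\XX_2$ cannot be forced by this mechanism. Your fallback of ``eliminating $x_0$ by a further application of $X_0$'' does not help either, since $X_0\circ M_1=x_1^{a_1}\cdots x_n^{a_n}$ again has rank strictly smaller than $\rk(M_1)$.

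The paper's proof is genuinely different and does not attempt to find a rank-preserving derivation. Instead it works with the specific linear form $X_0+Y_0$ (more precisely, all $\lambda X_0+\mu Y_0$ with $\lambda\mu\neq 0$) and computes the length of $T/(F^\perp:(X_0+Y_0)+(X_0+Y_0))$ explicitly, obtaining $\rk(F)-2$ (Lemma~\ref{rangodiF-2}). This shows at most two points of a minimal apolar set $\XX$ can lie on $\{X_0+Y_0=0\}$; a further argument comparing $(I_\XX:(X_0+Y_0)+(X_0+Y_0))$ with $(F^\perp:(X_0+Y_0)+(X_0+Y_0))$ in degrees $1$ and $d-1$ (Lemma~\ref{coprime monomials LEMMA}) rules out even those, yielding $\caW_F\subset V(X_0Y_0)$. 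The final step confines the points to the two coordinate subspaces by showing $X_0Y_l,\,X_kY_0\in I_\XX$ via a degree-two comparison (Lemma~\ref{CCG Lemma}) and a delicate coefficient analysis. None of this is visible from the derivation-based framework of Proposition~\ref{waringdecPROP}, which is precisely why the paper isolates this case.
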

We assume that $b_{0} \leq b_{i}$, for any $i = 1,\ldots,m$. Before proving the theorem, we need some preliminary result.

\begin{lemma}\label{perpofsum}
 Let $F = M_1+M_2= x_{0}^{a_{0}}x_1^{a_{1}}\cdots x_{n}^{a_{n}} + y_0^{b_0}\cdots y_m^{b_m}$ be a form of degree $d$.  Then,
 $$
  {\rm length}~T/F^\perp = {\rm length}~T/M_1^\perp + {\rm length}~T/M_2^\perp - 2.
 $$
\end{lemma}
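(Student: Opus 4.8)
The plan is to prove the identity by comparing, degree by degree, the Hilbert functions of the Artinian algebras $A_F = T/F^\perp$, $A_{M_1} = T/M_1^\perp$ and $A_{M_2} = T/M_2^\perp$. Note first that, directly from the definition of the apolar ideal, $(F^\perp)_e = \ker(T_e \xrightarrow{\circ F} S_{d-e})$, so $\dim_{\CC}(T/F^\perp)_e = \dim_{\CC}(T_e\circ F)$, the dimension of the space of all $e$-th order partial derivatives of $F$; the same holds for $M_1$ and $M_2$. Write $T' = \CC[X_0,\dots,X_n]$ and $T'' = \CC[Y_0,\dots,Y_m]$ for the two subrings of $T$ dual to the disjoint groups of variables. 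Since $M_1$ involves no $y$'s, $M_1^\perp = (Y_0,\dots,Y_m) + (X_0^{a_0+1},\dots,X_n^{a_n+1})$ as ideals of $T$, hence ${\rm length}\,T/M_1^\perp = \prod_{i=0}^n(a_i+1)$, and symmetrically for $M_2$; in particular everything can be computed inside $T'$ and $T''$ respectively.

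The main point is that for $1\le e\le d-1$ the derivative spaces split as a direct sum. Given $\Theta\in T_e$, decompose it into its purely-$X$ part $\Theta'\in T'_e$, its purely-$Y$ part $\Theta''\in T''_e$, and a part lying in the ideal $(X_iY_j : i,j)$; since every monomial involving some $Y_j$ (resp.\ some $X_i$) annihilates $M_1$ (resp.\ $M_2$), one gets $\Theta\circ F = \Theta'\circ M_1 + \Theta''\circ M_2$. Here $\Theta'\circ M_1\in\CC[x_0,\dots,x_n]_{d-e}$ and $\Theta''\circ M_2\in\CC[y_0,\dots,y_m]_{d-e}$ are forms of positive degree in disjoint sets of variables, so their sum vanishes iff both vanish; thus $T_e\circ F = (T'_e\circ M_1)\oplus(T''_e\circ M_2)$ and
$$\dim(T/F^\perp)_e \;=\; \dim(T/M_1^\perp)_e + \dim(T/M_2^\perp)_e,\qquad 1\le e\le d-1.$$

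It remains to treat the extremal degrees, where the direct sum degenerates and yields the correction term. In degree $0$ we have $T_0\circ F = \CC\cdot F$, one-dimensional, whereas $\CC\cdot M_1 + \CC\cdot M_2$ is two-dimensional because $M_1,M_2$ are linearly independent; in degree $d$ we have $T_d\circ M_1 = T_d\circ M_2 = \CC$, so $T_d\circ F = \CC$ is one-dimensional while the naive sum would again be two-dimensional. For $e>d$ all three Hilbert functions vanish. Summing over $e\ge 0$,
$$
{\rm length}\,T/F^\perp = \sum_{e\ge 0}\dim(T/F^\perp)_e = \Big(\sum_{e\ge 0}\big[\dim(T/M_1^\perp)_e + \dim(T/M_2^\perp)_e\big]\Big) - 2 = {\rm length}\,T/M_1^\perp + {\rm length}\,T/M_2^\perp - 2 .
$$
The argument is essentially bookkeeping; the only delicate step is the middle-degree claim, i.e.\ that a degree-$e$ operator killing $F$ kills $M_1$ and $M_2$ separately for $e<d$, which rests precisely on the two monomials living in disjoint variables and on $d-e\ge1$ preventing cancellation between constants. (Nothing here uses that $M_1,M_2$ are monomials, only that they are degree-$d$ forms in disjoint variables, but the monomial case is all that is needed.)
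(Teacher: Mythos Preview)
Your proof is correct, but it follows a different route from the paper's argument. The paper proceeds structurally: it first invokes the description $F^\perp = M_1^\perp\cap M_2^\perp + (G)$ with $G$ an explicit degree-$d$ form (citing \cite{BBKT15}), observes that $F^\perp$ and $M_1^\perp\cap M_2^\perp$ differ only in degree $d$, and then applies the short exact sequence
\[
0 \longrightarrow T/(M_1^\perp\cap M_2^\perp) \longrightarrow T/M_1^\perp \oplus T/M_2^\perp \longrightarrow T/(M_1^\perp+M_2^\perp) \longrightarrow 0,
\]
together with the fact that $M_1^\perp+M_2^\perp$ is the maximal ideal, to conclude. Your argument is instead a direct Hilbert-function comparison via the identification $\dim(T/F^\perp)_e=\dim(T_e\circ F)$ and the splitting $T_e\circ F = (T'_e\circ M_1)\oplus(T''_e\circ M_2)$ for $1\le e\le d-1$, with the two extremal degrees accounting for the $-2$. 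This is more elementary---it needs neither the structural description of $F^\perp$ nor the exact sequence---and, as you note, applies verbatim to any two forms in disjoint sets of variables, not just monomials. The paper's approach has the advantage of recording the explicit shape of $F^\perp$ and of setting up the exact-sequence machinery, both of which are reused in the surrounding lemmas.
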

\begin{proof}
We have that $F^{\perp} = M_1^\perp \cap M_2^\perp + \left(G\right)$, with $G = \prod_{i=0}^m(b_i!)X_0^{a_0}\cdots X_n^{a_n}-\prod_{j=0}^n(a_j!)Y_0^{b_0}\cdots Y_m^{b_m},$ (see also \cite[Lemma 1.12]{BBKT15}). Hence, $F^{\perp}$ and $M_1^\perp \cap M_2^\perp$ differ only in degree $d$. Since $G$ is not contained in $M_1^\perp \cap M_2^\perp$, we get
\begin{equation}\label{eq:1.1}
  {\rm length}~T/F^\perp = {\rm length}~T/(M_1^\perp \cap M_2^\perp) - 1.
\end{equation}
Considering the exact sequence
\begin{equation}\label{exact sequence}
0 \longrightarrow  T/(I \cap J )
\longrightarrow T/I \oplus T/J
\longrightarrow T/(I+J)
\longrightarrow 0,
\end{equation}
it follows from \eqref{eq:1.1} that
\begin{equation}\label{eq:1.2}
  {\rm length}~T/F^\perp = {\rm length}~T/M_1^\perp + {\rm length}~T/M_2^\perp  - {\rm length}~T/(M_1^\perp + M_2^\perp) - 1.
\end{equation}
Now, since $Y_{i} \in M_1^{\perp}$ and $X_j \in M_2^\perp$, for all $i=0,\ldots,n,~j = 0,\ldots,m$, we have that $M_1^\perp + M_2^\perp$ is the maximal ideal. Thus, from \eqref{eq:1.2}, we conclude.
\end{proof}

\begin{lemma}\label{rangodiF-2}
 Let $F = M_1+M_2$ be as in Theorem \ref{waringdeggeneralcuspTHM}. Then,
 $$
  {\rm length}~T/(F^\perp:(X_0+Y_0) + (X_0+Y_0))
  = \rk(F) - 2.
 $$
\end{lemma}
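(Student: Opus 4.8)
The plan is to translate the statement into a difference of lengths of apolar algebras of \emph{monomials}, obtained by differentiating $F$ in the direction $X_0+Y_0$, and then feed those lengths into Lemma \ref{perpofsum} and the rank formula for monomials. Write $I=F^\perp$ and $\ell=X_0+Y_0\in T_1$. The first step is a purely formal length identity. From $h\circ(g\circ F)=(hg)\circ F$ one gets $I:(\ell)=(\ell\circ F)^\perp$ and $I:(\ell^2)=\bigl(I:(\ell)\bigr):(\ell)=(\ell^2\circ F)^\perp$. Setting $J=I:(\ell)$, the module $T/J$ has finite length, so the multiplication map $\times\ell\colon T/J\to T/J$ has kernel and cokernel of equal length; its cokernel is $T/(J+(\ell))$ and its kernel is $(J:(\ell))/J=(I:(\ell^2))/(I:(\ell))$. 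Combining this with additivity of length in the exact sequence $0\to (I:(\ell^2))/(I:(\ell))\to T/(I:(\ell))\to T/(I:(\ell^2))\to 0$, I would obtain
\[
{\rm length}\,T/\bigl(F^\perp:(\ell)+(\ell)\bigr)={\rm length}\,T/(\ell\circ F)^\perp-{\rm length}\,T/(\ell^2\circ F)^\perp .
\]

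Next I would compute the two right-hand terms explicitly. Since $M_1$ contains $x_0$ only to the first power and no $y_0$, while $M_2$ contains no $x_0$, one finds
\[
\ell\circ F=x_1^{a_1}\cdots x_n^{a_n}+b_0\,y_0^{b_0-1}y_1^{b_1}\cdots y_m^{b_m},\qquad \ell^2\circ F=b_0(b_0-1)\,y_0^{b_0-2}y_1^{b_1}\cdots y_m^{b_m},
\]
the second form being $0$ precisely when $b_0=1$. Thus $\ell\circ F$ is again a sum of two monomials supported on disjoint sets of variables, the scalar $b_0$ being irrelevant for the apolar ideal (rescale $y_0$), so Lemma \ref{perpofsum} applies. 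Using ${\rm length}\,T/N^\perp=\prod(c_i+1)$ for a monomial $N=\prod z_i^{c_i}$, I get ${\rm length}\,T/(\ell\circ F)^\perp=(a_1+1)\cdots(a_n+1)+b_0(b_1+1)\cdots(b_m+1)-2$ and ${\rm length}\,T/(\ell^2\circ F)^\perp=(b_0-1)(b_1+1)\cdots(b_m+1)$, the latter formula recording the correct value $0$ in the degenerate case $b_0=1$.

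Subtracting, the dependence on $b_0$ cancels and I obtain
\[
{\rm length}\,T/\bigl(F^\perp:(\ell)+(\ell)\bigr)=(a_1+1)\cdots(a_n+1)+(b_1+1)\cdots(b_m+1)-2 .
\]
On the other hand, $\rk(M_1)=(a_1+1)\cdots(a_n+1)$ and $\rk(M_2)=\frac{1}{b_0+1}\prod_{j=0}^m(b_j+1)=(b_1+1)\cdots(b_m+1)$ by the rank formula for monomials, while Strassen's conjecture for sums of coprime monomials \cite{CCG} gives $\rk(F)=\rk(M_1)+\rk(M_2)$. Hence the right-hand side above equals $\rk(F)-2$, which is precisely the assertion of Lemma \ref{rangodiF-2}.

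The only genuinely delicate point is the length identity of the first step: one must keep track of the facts that $\ell\circ F\neq 0$ (so that $F^\perp:(\ell)$ is the apolar ideal of a nonzero form and $T/J$ is Artinian) and that all the modules appearing are of finite length, and absorb the case $b_0=1$, where $\ell^2\circ F=0$ and $(\ell^2\circ F)^\perp=T$ contributes length $0$, into the uniform formula. Everything after that is bookkeeping with the two explicit monomials together with Lemma \ref{perpofsum} and the monomial rank formula.
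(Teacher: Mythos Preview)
Your argument is correct and takes a genuinely different route from the paper's. You use the general length identity
\[
\mathrm{length}\,T/\bigl(J+(\ell)\bigr)=\mathrm{length}\,T/J-\mathrm{length}\,T/\bigl(J:(\ell)\bigr),\qquad J=F^\perp:(\ell),
\]
which follows from the equality of kernel and cokernel lengths for the endomorphism $\times\ell$ of the finite-length module $T/J$, and then identify $J=(\ell\circ F)^\perp$ and $J:(\ell)=(\ell^2\circ F)^\perp$. This reduces the problem to two apolar--algebra lengths that are read off directly from Lemma~\ref{perpofsum} and the monomial formula; the $b_0$-dependence cancels and the cases $b_0=1$ and $b_0>1$ are handled uniformly. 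The paper instead computes the ideal $F^\perp:(X_0+Y_0)+(X_0+Y_0)$ explicitly: it first replaces $(X_0+Y_0)$ by $(Y_0)$ using $X_0\in\bigl((X_0+Y_0)\circ F\bigr)^\perp$, and then (for $b_0>1$) rewrites the resulting ideal as an intersection of two complete-intersection ideals and applies the exact sequence~\eqref{exact sequence}; the case $b_0=1$ is done separately via Lemma~\ref{perpofsum}.

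Your approach is cleaner and more conceptual for the length count itself. The trade-off is that the paper's computation produces, as a byproduct, the explicit description of the ideal $F^\perp:(X_0+Y_0)+(X_0+Y_0)$ (e.g.\ that it contains $X_0$ and $Y_0$, and the intersection form in the case $b_0>1$), and this description is reused verbatim in the proof of Lemma~\ref{coprime monomials LEMMA}. So if you adopt your argument for Lemma~\ref{rangodiF-2}, those facts would need to be supplied separately where they are invoked later.
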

\begin{proof}
We have
\begin{align*}
  F^{\perp} :(X_0+Y_0) + (X_0+Y_0)  & = \big((X_0+Y_0)\circ F\big)^{\perp} + (X_0+Y_0) \\
  & = \left(x_1^{a_1}\cdots x_n^{a_n} +
 y_0^{b_0-1} y_1^{b_1}\cdots y_m^{b_m}\right)^{\perp} + (X_0+Y_0)\\
 & = \left(x_1^{a_1}\cdots x_n^{a_n} +
   y_0^{b_0-1} y_1^{b_1}\cdots y_m^{b_m}\right)^{\perp} +(Y_0),
\end{align*}
 where the last equality holds since $ X_0 \in \left(x_1^{a_1}\cdots x_n^{a_n} + y_0^{b_0-1} y_1^{b_1}\cdots y_m^{b_m} \right)^\perp$.
 
 In the case $b_0>1$, by direct computation, we get
  \begin{align*}
 \Big(x_1^{a_1}\cdots x_n^{a_n} &+
   y_0^{b_0-1} y_1^{b_1}\cdots y_m^{b_m}\Big)^{\perp}+(Y_0) = \Big(x_1^{a_1}\cdots x_n^{a_n}  \Big)^{\perp} \cap
   \Big( y_0^{b_0-1} y_1^{b_1}\cdots y_m^{b_m}  \Big)^\perp + \\
  & + \left((b_0-1)!\prod _{i=1}^m(b_i!) X_1^{a_1}\cdots X_n^{a_n} -\prod _{i=1}^n (a_i!) Y_0^{b_0-1} Y_1^{b_1}\cdots Y_m^{b_m} \right)+(Y_0) \\
  & = \big(x_1^{a_1}\cdots x_n^{a_n} \big)^{\perp} \cap
  \big( y_0^{b_0-1} y_1^{b_1}\cdots y_m^{b_m} \big)^\perp
  + \left((b_0-1)!\prod_{i=1}^m(b_i!) X_1^{a_1}\cdots X_n^{a_n} \right)+(Y_0) \\
  & = \big(x_1^{a_1}\cdots x_n^{a_n} \big)^{\perp} \cap
  \left(( y_0^{b_0-1}  y_1^{b_1}\cdots y_m^{b_m})^\perp +(Y_0)\right)
  + \big( X_1^{a_1}\cdots X_n^{a_n} \big),
  \end{align*}
where the last equality holds since $Y_0 \in (x_1^{a_1}\cdots x_n^{a_n})^\perp$.

  Moreover, since $X_1^{a_1}\cdots X_n^{a_n}  \in \big( y_0^{b_0-1} y_1^{b_1}\cdots y_m^{b_m} \big)^\perp$, we get
 \begin{align*}
 \big(x_1^{a_1}\cdots x_n^{a_n} \big)^{\perp} &\cap
  \left(( y_0^{b_0-1}  y_1^{b_1}\cdots y_m^{b_m})^\perp +(Y_0)\right)
  + \big( X_1^{a_1}\cdots X_n^{a_n} \big) \\
 &=
   \left((x_1^{a_1}\cdots x_n^{a_n} )^{\perp}  + (X_1^{a_1}\cdots X_n^{a_n})\right)
  \cap
 \left(( y_0^{b_0-1}  y_1^{b_1}\cdots y_m^{b_m} )^\perp +(Y_0)\right).
 \end{align*}
  So, by the exact sequence \eqref{exact sequence}, we get
 \begin{align*}
 {\rm length} & ~T/ (F^{\perp} :(X_0+Y_0) + (X_0+Y_0)  ) =  \\
 & = {\rm length} \ T/ ( (x_1^{a_1}\cdots x_n^{a_n} )^{\perp}  + (X_1^{a_1}\cdots X_n^{a_n}) ) +
  {\rm length}   \  T/
 (( y_0^{b_0-1}  y_1^{b_1}\cdots y_m^{b_m} )^\perp +(Y_0))  \\
 &~~ - {\rm length}   \  T/ ( (x_1^{a_1}\cdots x_n^{a_n} )^{\perp}  + (X_1^{a_1}\cdots X_n^{a_n})
+
  ( y_0^{b_0-1}  y_1^{b_1}\cdots y_m^{b_m} )^\perp +(Y_0)) \\
  & =  {\rm length} \  T/
 (X_0,X_1^{a_1+1},\ldots,X_n^{a_n+1},Y_0,\ldots,Y_m,X_1^{a_1}\cdots X_n^{a_n}) \\
 & ~~~+ {\rm length} \ T/  (X_0,\ldots,X_n,Y_0,Y_1^{b_1+1},\ldots,Y_m^{b_m+1}) - 1 \\
 & = \prod_{i=1}^n (a_i+1) -1 + \prod_{i=1}^m ( b_i+1) -1 = \rk F -2.
\end{align*}
In case $b_0=1$, since $ F^{\perp} :(X_0+Y_0) + (X_0+Y_0)   = \big(x_1^{a_1}\cdots x_n^{a_n} + y_1^{b_1}\cdots y_m^{b_m}\big)^{\perp} +(X_0,Y_0)  $, by Lemma \ref{perpofsum} we get
$$   {\rm length}   \  T/ (F^{\perp} :(X_0+Y_0) + (X_0+Y_0)  )
 = {\rm length}   \  \widetilde T/ \big(x_1^{a_1}\cdots x_n^{a_n} + y_1^{b_1}\cdots y_m^{b_m}\big)^{\perp}
 $$
 $$  = {\rm length}   \  \widetilde T/ \left(x_1^{a_1}\cdots x_n^{a_n} \right)^{\perp} + {\rm length} \ 
 \widetilde T/ ( y_1^{b_1}\cdots y_m^{b_m})^\perp -2 = \rk F -2,
 $$
 where
 $\widetilde T= \mathbb{C}[ X_1,\ldots X_n, Y_1,  \ldots Y_m ].$
 \end{proof}

\begin{lemma}\label{coprime monomials LEMMA} Let
$F = M_1+M_2$ be as in Theorem \ref{waringdeggeneralcuspTHM}. Then, $$\caW_F \subset \{X_0Y_0 = 0\} \subset \PP^{n+m+1}.$$
\end{lemma}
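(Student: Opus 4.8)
The plan is to show that any point $P = [a_{1,0}:\dots:a_{n}:b_0:\dots:b_m] \in \PP^{n+m+1}$ lying off the hypersurface $\{X_0Y_0 = 0\}$ is a forbidden point for $F$, by exploiting the apolarity machinery together with the two length computations already established in Lemma \ref{perpofsum} and Lemma \ref{rangodiF-2}. Recall that $\rk(F) = \rk(M_1) + \rk(M_2) - 1$: indeed, by Lemma \ref{perpofsum}, ${\rm length}\,T/F^\perp = {\rm length}\,T/M_1^\perp + {\rm length}\,T/M_2^\perp - 2$, and since a minimal apolar set to a monomial is a complete intersection of the expected length, this translates into the rank identity $\rk(F) = \rk(M_1) + \rk(M_2) - 1$ (this is the Strassen-type value for coprime monomials sharing a linear form, established in the references).

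First I would fix a linear form $\ell$ and recall the standard dictionary: $[\ell] \in \caW_F$ if and only if there is $\lambda \in \CC$ with $\rk(F - \lambda \ell^d) = \rk(F) - 1$. Now take $P \notin \{X_0 Y_0 = 0\}$; after rescaling we may assume $P$ corresponds to the derivation $\partial = X_0 + Y_0 + (\text{terms in the other variables})$, and the key point is that $\partial$ is \emph{not} in the span of the $X_i$ alone nor the $Y_j$ alone, and that its $X_0$- and $Y_0$-components are both nonzero. I would then argue: suppose for contradiction $[\ell_P] \in \caW_F$, so there is a set $\XX$ of $\rk(F)$ reduced points apolar to $F$ with $P \in \XX$. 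Apply the colon-ideal trick as in the proof of Proposition \ref{waringdecPROP}: writing $I_{\XX'} = I_\XX : (\partial)$, we have $I_{\XX'} \subset F^\perp : (\partial) = (\partial \circ F)^\perp$, and $\XX'$ consists of the points of $\XX$ off the hyperplane $\{\partial = 0\}$, so $P \notin \XX'$ and hence $|\XX'| \le \rk(F) - 1$. Therefore $\rk(\partial \circ F) \le |\XX'| \le \rk(F) - 1$.

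The crux is then to compute $\rk(\partial \circ F)$ where $\partial$ has nonzero $X_0$- and $Y_0$-components, and show it equals $\rk(F)$, giving the contradiction. By a linear change of coordinates within the $x$-variables and within the $y$-variables we can assume $\partial = X_0 + Y_0$ (absorbing the other components into $X_0$ and $Y_0$ — here one must be slightly careful that such a change of variables is legitimate, i.e. does not alter the form of $F$ as a sum of two coprime-variable monomials; this is where using $X_0 + Y_0$ as the canonical representative pays off, since $M_1$ involves $x_0$ with exponent $1$ and $M_2$ involves $y_0$). Then Lemma \ref{rangodiF-2} gives exactly ${\rm length}\,T/\big(F^\perp : (X_0+Y_0) + (X_0+Y_0)\big) = \rk(F) - 2$. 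But ${\rm length}\,T/\big((\partial\circ F)^\perp + (\partial)\big)$ is the length of the apolar algebra of $\partial \circ F$ modulo one more linear form; since $\partial \circ F = x_1^{a_1}\cdots x_n^{a_n} + y_0^{b_0-1}\cdots y_m^{b_m}$ essentially involves all the remaining variables, cutting by $\partial = X_0+Y_0$ drops the length of $T/(\partial\circ F)^\perp$ by exactly $1$ more than cutting would "generically" suggest — the precise bookkeeping shows $\rk(\partial \circ F) = \rk(F)$, contradicting $\rk(\partial\circ F)\le \rk(F)-1$ above.

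The main obstacle I expect is precisely this last length-to-rank translation: going from the length identity ${\rm length}\,T/\big(F^\perp:(X_0+Y_0)+(X_0+Y_0)\big) = \rk(F)-2$ to the rank statement $\rk(\partial\circ F) = \rk(F)$ requires knowing that $\partial\circ F$ is again of the shape covered by Lemma \ref{perpofsum} (a sum of two monomials in disjoint variables, up to the shared variable), and that its rank is computed by the complete-intersection structure of its apolar ideal; one must check the hypothesis $\deg F \ge 3$ is used to keep both monomials genuinely present and the Strassen value valid. Once this is in hand, the inclusion $\caW_F \subset \{X_0Y_0 = 0\}$ follows: any point with \emph{both} $X_0$- and $Y_0$-coordinates nonzero is forbidden by the argument above, so $\caW_F$ is contained in $\{X_0 = 0\} \cup \{Y_0 = 0\} = V(X_0 Y_0)$.
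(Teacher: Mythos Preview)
Your proposal has a fundamental gap: the key equality $\rk\big((X_0+Y_0)\circ F\big)=\rk(F)$ that you need is \emph{false} in this setting. Note that $(X_0+Y_0)\circ F = x_1^{a_1}\cdots x_n^{a_n} + b_0\,y_0^{b_0-1}y_1^{b_1}\cdots y_m^{b_m}$, a sum of coprime monomials, and by Strassen additivity for monomials (\cite{CCG}) its rank is $\rk(x_1^{a_1}\cdots x_n^{a_n})+\rk(y_0^{b_0-1}\cdots y_m^{b_m})$. Since the exponent of $x_0$ in $M_1$ is $1$, differentiating by $X_0$ removes $x_0$ entirely, and $\rk(x_1^{a_1}\cdots x_n^{a_n})=\frac{1}{\min_i(a_i)+1}\prod_{i=1}^n(a_i+1)<\prod_{i=1}^n(a_i+1)=\rk(M_1)$. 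Hence $\rk\big((X_0+Y_0)\circ F\big)<\rk(F)$, and the contradiction you are aiming for never materialises. This is precisely why the present lemma is \emph{not} covered by Proposition~\ref{waringdecPROP} and Lemma~\ref{linearderivativelemma}: the exponent $1$ on $x_0$ destroys the rank-preserving derivation trick.

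There are also auxiliary issues. First, $\rk(F)=\rk(M_1)+\rk(M_2)$, not $\rk(M_1)+\rk(M_2)-1$: Lemma~\ref{perpofsum} computes lengths of apolar algebras, and for a monomial the length of $T/M^\perp$ is $\prod_i(d_i+1)$, which is \emph{not} the rank. Second, your change of variables ``within the $x$-variables'' absorbing the other $X_i$-components of $\partial$ into $X_0$ does not leave $M_1$ a monomial, so Lemma~\ref{rangodiF-2} no longer applies. Third, even granting the reduction, you claim $P\notin\XX'$ because $P\in\{\partial=0\}$, but if $\partial$ is the linear form dual to $P$ then $\partial(P)=\sum p_i^2$, which need not vanish.

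The paper's actual argument is substantially more delicate and does not pass through $\rk(\partial\circ F)$. It shows that for any $\lambda\mu\neq 0$ no point of a minimal apolar set $\XX$ lies on $\{\lambda X_0+\mu Y_0=0\}$; reducing to $\lambda=\mu=1$, Lemma~\ref{rangodiF-2} only gives $|\XX'|\ge \rk(F)-2$, so at most two points of $\XX$ could lie on the hyperplane. The remaining two units are eliminated by a careful degree-by-degree comparison of $I_\XX:(X_0+Y_0)+(X_0+Y_0)$ with $F^\perp:(X_0+Y_0)+(X_0+Y_0)$: one shows they differ in degree $1$ (via nonzerodivisor arguments for $X_0,Y_0$ on $T/I_\XX$) and in degree $d-1$ (by exhibiting explicit forms $H$, resp.\ $H_1,H_2$, that cannot lie in the smaller ideal), forcing $|\XX'|=\rk(F)$.
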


\begin{proof}

 Let $I_{\XX} \subset F^{\perp}$ be the ideal of a minimal set of apolar points for $F$, thus
  $$|\XX| = \rk F .$$
   It is enough to show that there are no points of $\XX$ lying on the hyperplanes
  $\lambda X_0 + \mu Y_0 = 0$, for $\lambda  \mu\neq 0$. After a change of coordinates, we may assume $\lambda = \mu = 1$.

 We consider $I_{\XX'} = I_{\XX} : (X_0+Y_0)$ the ideal of the set of points in $\XX$ which do not lie on $X_0 + Y_0 = 0$.

 The cardinality of $\XX'$ is at least
  the length of the ring $T/(F^{\perp} :~ (X_0+Y_0) + (X_0+Y_0))$, that is, by Lemma \ref{rangodiF-2},
  \begin{equation}\label{ineq:lowerbound_cardinality}
  |\XX'| \geq \rk F -2.
  \end{equation}
It follows that  on the hyperplane $X_0 + Y_0 = 0$ we have at most two points of $\XX$.

\medskip
  {\it Claim:} In degree $1$, the ideal
$I_{\XX}:(X_0+Y_0) + (X_0+Y_0)$ differs from $F^{\perp}:(X_0+Y_0) + (X_0+Y_0)$.
 \begin{proof}[Proof of Claim.]
 As already computed in the proof of Lemma \ref{rangodiF-2}, we have that $F^{\perp}:(X_0+Y_0)+(X_0+Y_0)$ contains two linear forms, namely $X_0$ and $Y_0$.

 Now, assume that
 $$L = \alpha_0X_0 + \ldots + \alpha_nX_n + \beta_0Y_0 + \ldots + \beta_mY_m \in I_{\XX}:(X_0+Y_0).$$
Thus, we have that $L(X_0+Y_0) \in I_{\XX} \subset F^{\perp}.$

In  case $b_0>1$,  since $X_0^2, X_0Y_0, \ldots,X_0Y_m,  X_1Y_0, \ldots,X_nY_0 \in F^\perp$ we get
  $$(\alpha_1 X_0X_1+  \ldots + \alpha_nX_0X_n+ \beta_0 Y_0^2+
  \beta_1 Y_0Y_1+  \ldots + \beta_mY_0Y_m) \circ F= 0,$$
and from this easily follows that
     $ \alpha_1=\ldots=\alpha_n=\beta_0=\beta_1=\ldots=\beta_m=0$. Hence,  $L=  \alpha_0 X_0 $ and so $\alpha_0 X_0 (X_0+Y_0) \in I_{\XX}$.

Now, consider the hyperplane $Y_0=0$. Again, by direct computation, we get
 \begin{align*}F^{\perp} + (Y_0) & = \big(x_0x_1^{a_1}\cdots x_n^{a_n}\big)^{\perp} \cap  \big(y_0^{b_0}y_1^{b_1}\ldots y_m^{b_m}\big)^\perp + \\
& ~~~\left(\prod_{i=1}^m(b_i!) X_0X_1^{a_1}\cdots X_n^{a_n} - \prod(a_i!) Y_0^{b_0}Y_1^{b_1}\ldots Y_m^{b_m} \right)+ (Y_0) \\
& = \big(x_0x_1^{a_1}\cdots x_n^{a_n}\big)^{\perp} \cap  \big(y_0^{b_0}y_1^{b_1}\ldots y_m^{b_m}\big)^\perp +
(X_0X_1^{a_1}\cdots X_n^{a_n}  )+ (Y_0) \\
& \subseteq \big((x_0x_1^{a_1}\cdots x_n^{a_n})^{\perp}+
 (X_0X_1^{a_1}\cdots X_n^{a_n}  , Y_0)\big)
 \cap  \big((y_0^{b_0}y_1^{b_1}\ldots y_m^{b_m})^\perp +
 (X_0X_1^{a_1}\cdots X_n^{a_n}  , Y_0)\big) \\
 & =\big((x_0 x_1^{a_1}\cdots x_n^{a_n})^{\perp} +  (X_0X_1^{a_1}\cdots X_n^{a_n}  )\big)\cap
\big((y_0^{b_0} y_1^{b_1}\ldots y_m^{b_m})^\perp +(Y_0)\big).
\end{align*}
 Hence, using again the exact sequence \eqref{exact sequence}, we get
 \begin{align*}
 {\rm length} \ T/(F^{\perp} + (Y_0) ) & \geq {\rm length} \ T/\big((x_0 x_1^{a_1}\cdots x_n^{a_n})^{\perp} +  (X_0X_1^{a_1}\cdots X_n^{a_n}  )\big) \\
 & + {\rm length} \ T/ \big((y_0^{b_0} y_1^{b_1}\ldots y_m^{b_m})^\perp +(Y_0)\big)
 -1 \\
 & = 2\prod _{i = 1}^n (a_i+1) +  \prod _{i = 1}^m (b_i+1) -2 \\
 & = \rk~F + \prod _{i = 1}^n (a_i+1) -2 >  \rk~F.
 \end{align*}
 So,
 $${\rm length} \ T/(F^{\perp} + (Y_0)  ) >  \rk F.$$
Hence, $Y_0$ is not a zero divisor for $I_{\XX}$ and there are points of ${\XX}$ lying  on the hyperplane  $Y_0=0$.
Since, by \cite[Remark 3.3]{CCG}, there are no points of ${\XX}$ on the linear space defined by the ideal $(X_0,Y_0)$,
and since $ \alpha_0 X_0(X_0+Y_0) \in I_{\XX}$,
 it follows that $ \alpha_0 =0$. So  $I_{\XX}:(X_0+Y_0) + (X_0+Y_0)$ contains only the linear form $X_0+Y_0$, and thus in  case $b_0>1$ the Claim is proved.

In  case $b_0=1$,
since $X_0^2, X_0Y_0, \ldots,X_0Y_m, Y_0^2, X_1Y_0, \ldots,X_nY_0 \in F^\perp$ we get
  $$(\alpha_1 X_0X_1+  \ldots + \alpha_nX_0X_n+
  \beta_1 Y_0Y_1+  \ldots + \beta_mY_0Y_m) \circ F= 0,$$
and so
     $ \alpha_1=\ldots=\alpha_n=\beta_1=\ldots=\beta_m=0$. Hence,  $L=  \alpha_0 X_0 +\beta_0 Y_0$ and $$L\cdot (X_0+Y_0) =( \alpha_0 X_0 +\beta_0 Y_0) (X_0+Y_0) \in I_{\XX}.$$
  Now, consider the hyperplanes $X_0=0$ and $Y_0=0$.
Again, by direct computation, we get
\begin{align*}
 F^{\perp} + (X_0) & = \big(x_0x_1^{a_1}\cdots x_n^{a_n}\big)^{\perp} \cap  \big(y_0y_1^{b_1}\ldots y_m^{b_m}\big)^\perp \\
 & + \left(\prod_{i=1}^m(b_i!) X_0X_1^{a_1}\cdots X_n^{a_n} - \prod_{i=1}^n(a_i!) Y_0Y_1^{b_1}\ldots Y_m^{b_m} \right)+ (X_0)= \\
&  = \big(x_0x_1^{a_1}\cdots x_n^{a_n}\big)^{\perp} \cap  \big(y_0y_1^{b_1}\ldots y_m^{b_m}\big)^\perp +
(Y_0Y_1^{b_1}\ldots Y_m^{b_m} )+ (X_0) \\
& \subseteq \big((x_0x_1^{a_1}\cdots x_n^{a_n})^{\perp}+
 (Y_0Y_1^{b_1}\ldots Y_m^{b_m} , X_0)\big)
 \cap  \big((y_0y_1^{b_1}\ldots y_m^{b_m})^\perp +
 (Y_0Y_1^{b_1}\ldots Y_m^{b_m} , X_0)\big) \\
 & =\big(x_1^{a_1}\cdots x_n^{a_n}\big)^{\perp}  \cap
\big((y_0y_1^{b_1}\ldots y_m^{b_m})^\perp +
 (Y_0Y_1^{b_1}\ldots Y_m^{b_m} )\big).
\end{align*}
 Hence, using again the exact sequence \eqref{exact sequence}, we get
  \begin{align*}
  {\rm length} \ T/\big(F^{\perp} + (X_0) \big) & \geq {\rm length} \ T/\big(x_1^{a_1}\cdots x_n^{a_n}\big)^{\perp} \\
  &+ {\rm length} \ T/ \big((y_0y_1^{b_1}\ldots y_m^{b_m})^\perp + (Y_0Y_1^{b_1}\ldots Y_m^{b_m} )\big) -1 \\
  & = \prod_{i = 1}^n (a_i+1) + 2 \prod_{i = 1}^m (b_i+1) -2
  = \rk F + \prod _{i = 1}^m (b_i+1) -2 >  \rk F.
  \end{align*}
It follows that
 $${\rm length} \ T/(F^{\perp} + (X_0)  )>  \rk F.$$
 Analogously we have
 $${\rm length} \ T/(F^{\perp} + (Y_0) ) > \rk F.$$

  So $X_0$ and $Y_0$ are not zero divisors for $I_{\XX}$. Hence there are points of ${\XX}$ lying both on the hyperplane $X_0=0$ and on $Y_0=0$.

Since,
by \cite[Remark 3.3]{CCG}, there are no points of ${\XX}$ on the linear space defined by the ideal $(X_0,Y_0)$,
and since $( \alpha_0 X_0 +\beta_0 Y_0) (X_0+Y_0) \in I_{\XX}$,
 it follows that $ \alpha_0 =\beta_0=0$. Thus  $I_{\XX}:(X_0+Y_0) + (X_0+Y_0)$ contains only the linear form $X_0+Y_0$, and the Claim is proved also in case $b_0=1$.
   \end{proof}

 Now, the idea is to show that $I_{\XX}:(X_0+Y_0) + (X_0+Y_0)$ differs from $F^{\perp}:(X_0+Y_0) + (X_0+Y_0)$ also in degree $d-1$. From this, the Claim above and \eqref{ineq:lowerbound_cardinality}, it would follow that the cardinality of $\XX'$ is actually $\rk F$ and then we have no points of
 $\XX$ over the hyperplane $X_0+Y_0 = 0$.

 Consider first the case $b_0 >1$. In this case,
 since (see the proof of Lemma \ref{rangodiF-2})
 $$F^{\perp}~:(X_0+Y_0)+(X_0+Y_0)=
 ( (x_1^{a_1}\cdots x_n^{a_n} )^{\perp}  + (X_1^{a_1}\cdots X_n^{a_n}) )
  \cap
  (( y_0^{b_0-1} y_1^{b_1}\cdots y_m^{b_m} )^\perp + (Y_0)),$$
 we have that $F^{\perp}~:(X_0+Y_0)+(X_0+Y_0)$ contains the whole vector space $T_{d-1}$.
We will prove that
 $(I_{\XX} : (X_0+Y_0) + (X_0+Y_0))_{d-1} \neq T_{d-1}. $
 Since,  from the Claim,
 $I_{\XX}:(X_0+Y_0) + (X_0+Y_0)$ differs from $F^{\perp}:(X_0+Y_0) + (X_0+Y_0)$,
 then
  $$|\XX'| \geq 1+ \hbox{ length } T/(F^{\perp}~:(X_0+Y_0)+(X_0+Y_0) )  = \rk F-1.$$ Hence
 there is at most one point of $\XX$, say $P$,  lying on the hyperplane $X_0 + Y_0 = 0$.
 Since there are no points on the linear space $(X_0,Y_0)$, we can write
 $P = [1,u_1,\ldots,u_n,-1,v_1, \ldots,v_m]$.

  Let
  $$H =X_1^{a_1}\cdots X_n^{a_n}- u_1^{a_1}\cdots u_n^{a_n}X_0^{d-1}.$$
If we assume, by contradiction,   that
$I_{\XX} : (X_0+Y_0) + (X_0+Y_0)$ contains all the forms of degree $d-1$, we have that
$$H  \in I_{\XX} : (X_0+Y_0) + (X_0+Y_0),$$
that is,
$$H + (X_0+Y_0)G \in I_{\XX} : (X_0+Y_0)$$
for
some $G \in T_{d-2}$.
Since $H + (X_0+Y_0)G$ vanishes at $P$ and at the points of $\XX'$, we actually have
$$H + (X_0+Y_0)G \in I_{\XX} \subset F^{\perp},$$
and from this
$$\big(H + (X_0+Y_0)G \big) \circ  F=0.$$
Now, recalling that $d \geq3$, and so $X_0^{d-1} \circ F=0$, we get
$$\big(X_1^{a_1}\cdots X_n^{a_n}- u_1^{a_1}\cdots u_n^{a_n}X_0^{d-1}+ (X_0+Y_0)G\big)\circ F=\prod_{i=1}^n(a_i !) x_0 +G\circ \big(x_1^{a_1}\cdots x_n^{a_n} + y_0^{b_0-1}y_1^{b_1}\ldots y_m^{b_m}\big) =0,
$$
 and this is impossible, since $G\circ (x_1^{a_1}\cdots x_n^{a_n} + y_0^{b_0-1}y_1^{b_1}\ldots y_m^{b_m})$ cannot be a multiple of $x_0$.

 Now, let $b_0=1$.
 In this case we have
 $$F^{\perp}:(X_0+Y_0)+(X_0+Y_0)= \big(x_1^{a_1}\cdots x_n^{a_n} +
  y_1^{b_1}\cdots y_m^{b_m}\big)^{\perp} +(X_0,Y_0), $$
  hence, in degree $d-1$,
  $$\dim \big(F^{\perp}:(X_0+Y_0)+(X_0+Y_0)\big)_{d-1} = \dim  T _{d-1} -1.$$

 Since,  from the Claim,
 $I_{\XX}:(X_0+Y_0) + (X_0+Y_0)$ differs from $F^{\perp}:(X_0+Y_0) + (X_0+Y_0)$,
 then
  $|\XX'| \geq 1+ \hbox{ length } T/(F^{\perp}~:(X_0+Y_0)+(X_0+Y_0) )  = \rk F-1$. Hence
 there is at most one point of $\XX$, say $P$,  lying on the hyperplane $X_0 + Y_0 = 0$.
 Since there are no points on the linear space $(X_0,Y_0)$, we can assume that
  $P = [1,u_1,\ldots,u_n,-1,v_1, \ldots,v_m]$.

  Let
  $$H_1 =X_1^{a_1}\cdots X_n^{a_n}- u_1^{a_1}\cdots u_n^{a_n}X_0^{d-1},$$
  $$H_2 =Y_1^{b_1}\cdots Y_m^{b_m}-(-1)^{d-1}v_1^{b_1}\cdots v_m^{b_m}Y_0^{d-1}.$$
We prove that  $H_1 \notin I_{\XX} : (X_0+Y_0) + (X_0+Y_0)$. In fact, if
$H_1 \in I_{\XX} : (X_0+Y_0) + (X_0+Y_0)$ we have
$$H_1 + (X_0+Y_0)G_1 \in I_{\XX} : (X_0+Y_0)$$
for  some $G_1 \in T_{d-2}$.
But $H_1 + (X_0+Y_0)G_1$ vanishes at $P$ and at the points of $\XX'$,  so we have
$$H_1 + (X_0+Y_0)G_1 \in I_{\XX} \subset F^{\perp},$$
and from this
$$(H_1 + (X_0+Y_0)G_1 ) \circ  F=0.$$
But
\begin{align*}(H_1 + (X_0+Y_0) G_1) \circ F & = \big(X_1^{a_1}\cdots X_n^{a_n}- u_1^{a_1}\cdots u_n^{a_n}X_0^{d-1}+ (X_0+Y_0)G_1\big)\circ F= \\
& =\prod_{i=1}^n (a_i !) x_0 +G_1\circ \big(x_1^{a_1}\cdots x_n^{a_n} + y_0^{b_0-1}y_1^{b_1}\ldots y_m^{b_m}\big) =0,
\end{align*}
 and  this is impossible, since $G_1\circ (x_1^{a_1}\cdots x_n^{a_n} + y_0^{b_0-1}y_1^{b_1}\ldots y_m^{b_m})$ cannot be a multiple of $x_0$.

Now, we can also show that $H_2 \notin I_{\XX} : (X_0+Y_0) + (X_0+Y_0) + (H_1)$. Indeed, assume by contradiction that there exists $\alpha\in\CC$ and $G_2 \in T_{d-2}$ such that
 $$H_2 + \alpha H_1 + (X_0+Y_0)G_2 \in I_{\XX}:(X_0+Y_0).$$
 Since $H_2+\alpha H_1 + (X_0+Y_0)G_2$ vanishes at the point $P$ by construction, we get that
 $$
  H_2+\alpha H_1 + (X_0+Y_0)G_2 \in I_{\XX} \subset F^\perp,
 $$ and, therefore,
 $$
  \big(  H_2+\alpha H_1 + (X_0+Y_0)G_2 \big) \circ F = 0.
 $$
 Hence,
 $$
  \prod_{i=1}^m(b_i!) y_0 + \alpha \prod_{i=1}^n (a_i!) x_0 + G_2 \circ \big( x_1^{a_1}\cdots x_n^{a_n} + y_0^{b_0-1}\cdots y_m^{b_m}\big) = 0.
 $$
  Since $G \circ \big( x_1^{a_1}\cdots x_n^{a_n} + y_0^{b_0-1}\cdots y_m^{b_m}\big)$ cannot produce multiples of $x_0$ and $y_0$, we get a contradiction.

 From this, it follows that
$$\dim \big(I_{\XX} : (X_0+Y_0) + (X_0+Y_0)\big)_{d-1} \leq \dim  T _{d-1} -2.$$
Therefore,
 $$\big(I_{\XX} : (X_0+Y_0) + (X_0+Y_0)\big)_{d-1} \neq \big(F^{\perp}~:(X_0+Y_0)+(X_0+Y_0) \big)_{d-1}.$$
\end{proof}

\begin{lemma}\label{CCG Lemma}
 Let $F = M_1+M_2$ be as in Theorem \ref{waringdeggeneralcuspTHM} and let $\XX$ be a minimal set of points apolar to $F$. Then,
$$
   ( F^\perp : (X_0,Y_0))_2 \subseteq \left(I_{\XX} + (X_0+Y_0)\right)_2.
 $$
\end{lemma}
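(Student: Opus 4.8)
The plan is to deduce the lemma from two structural facts about the partition $\XX=\XX_1\sqcup\XX_2$ furnished by Lemma~\ref{coprime monomials LEMMA}, and then to close the argument with elementary linear algebra. First I would describe the left-hand side. Since $X_0\circ F=x_1^{a_1}\cdots x_n^{a_n}$ and $Y_0\circ F=b_0\,y_0^{b_0-1}y_1^{b_1}\cdots y_m^{b_m}$, we have $F^\perp:(X_0)=(x_1^{a_1}\cdots x_n^{a_n})^\perp=:A$ and $F^\perp:(Y_0)=(y_0^{b_0-1}y_1^{b_1}\cdots y_m^{b_m})^\perp=:B$, both monomial ideals, so $(F^\perp:(X_0,Y_0))_2=A_2\cap B_2$. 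Moreover, by Lemma~\ref{coprime monomials LEMMA} together with \cite[Remark 3.3]{CCG}, every point of $\XX$ lies on $\{X_0=0\}$ or on $\{Y_0=0\}$ but not on $\{X_0=Y_0=0\}$; hence $\XX=\XX_1\sqcup\XX_2$ with $\XX_1=\{P\in\XX:X_0(P)\neq0\}\subseteq\{Y_0=0\}$ and $\XX_2=\{P\in\XX:Y_0(P)\neq0\}\subseteq\{X_0=0\}$, so that $I_{\XX_1}=I_\XX:(X_0)$ and $I_{\XX_2}=I_\XX:(Y_0)$.

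The key step is to pin down $I_{\XX_1}$ and $I_{\XX_2}$ modulo $(X_0)$ and $(Y_0)$. Because $\XX_1$ is disjoint from $\{X_0=0\}$, the element $X_0$ is a nonzerodivisor on the Cohen--Macaulay ring $T/I_{\XX_1}$, so $T/(I_{\XX_1}+(X_0))$ is Artinian of length $|\XX_1|$; since $I_{\XX_1}+(X_0)\subseteq A+(X_0)=A$ and ${\rm length}\,T/A=\prod_{i=1}^n(a_i+1)=\rk M_1$, this yields $|\XX_1|\geq\rk M_1$, and symmetrically $|\XX_2|\geq\rk M_2$. As $|\XX_1|+|\XX_2|=\rk F=\rk M_1+\rk M_2$ (additivity of rank for coprime monomials, \cite{CCCGW}), both inequalities are equalities, so the natural surjections $T/(I_{\XX_1}+(X_0))\twoheadrightarrow T/A$ and $T/(I_{\XX_2}+(Y_0))\twoheadrightarrow T/(B+(Y_0))$ are isomorphisms; therefore
\[I_{\XX_1}+(X_0)=A\qquad\text{and}\qquad I_{\XX_2}+(Y_0)=B+(Y_0).\]
Reading these in degree $2$ gives $A_2\subseteq(I_{\XX_1})_2+X_0T_1$ and $B_2\subseteq(I_{\XX_2})_2+Y_0T_1$; reading them in degree $1$, and using $Y_0\in(I_{\XX_1})_1$, $X_0\in(I_{\XX_2})_1$, gives $\langle Y_0,\dots,Y_m\rangle\subseteq(I_{\XX_1})_1+\langle X_0\rangle$ and $\langle X_0,\dots,X_n\rangle\subseteq(I_{\XX_2})_1+\langle Y_0\rangle$, whence $(I_{\XX_1})_1+(I_{\XX_2})_1=T_1$.

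To conclude, fix $\mu\in A_2\cap B_2$ and write $\mu=h_1+X_0q_1$ with $h_1\in(I_{\XX_1})_2$, $q_1\in T_1$, and $\mu=h_2+Y_0q_2$ with $h_2\in(I_{\XX_2})_2$, $q_2\in T_1$. Since $q_1-q_2\in T_1=(I_{\XX_1})_1+(I_{\XX_2})_1$, there is $t\in T_1$ with $q_1+t\in(I_{\XX_1})_1$ and $q_2+t\in(I_{\XX_2})_1$. Put $h=\mu+(X_0+Y_0)t$. On $\XX_1$ (where $Y_0\equiv0$) one has $h=X_0(q_1+t)$, which vanishes on $\XX_1$; on $\XX_2$ (where $X_0\equiv0$) one has $h=Y_0(q_2+t)$, which vanishes on $\XX_2$. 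Hence $h\in(I_\XX)_2$ and $\mu=h-(X_0+Y_0)t\in(I_\XX+(X_0+Y_0))_2$, which is the asserted inclusion.

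I expect the main obstacle to be the key step, i.e.\ establishing $I_{\XX_1}+(X_0)=A$ and $I_{\XX_2}+(Y_0)=B+(Y_0)$: this rests on the cardinality identities $|\XX_1|=\rk M_1$, $|\XX_2|=\rk M_2$, hence on the length bookkeeping through the Cohen--Macaulay/hyperplane-section argument above and on the known additivity $\rk(M_1+M_2)=\rk M_1+\rk M_2$ for coprime monomials; once these are in place, the rest of the proof is formal. The auxiliary computations $X_0\circ F$, $Y_0\circ F$, ${\rm length}\,T/A$ and ${\rm length}\,T/(B+(Y_0))$ are routine and I would not carry them out in detail.
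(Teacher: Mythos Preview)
Your argument is correct, and it is genuinely different from the paper's proof. The paper does not use Lemma~\ref{coprime monomials LEMMA} here at all: instead it invokes the Hilbert-function bookkeeping of \cite[Theorem~3.2]{CCG}, comparing $I_\XX:(X_0,Y_0)+(X_0+Y_0)$ with $F^\perp:(X_0,Y_0)+(X_0+Y_0)$ and with the explicit monomial ideal $J_1\cap J_2$, and concludes that these agree in every degree $\neq 1$ by a length count. Your route goes the other way: you first exploit Lemma~\ref{coprime monomials LEMMA} to split $\XX=\XX_1\sqcup\XX_2$, then use the Cohen--Macaulay hyperplane-section argument together with $\rk F=\rk M_1+\rk M_2$ to force $I_{\XX_1}+(X_0)=A$ and $I_{\XX_2}+(Y_0)=B+(Y_0)$, and finally patch via the observation $(I_{\XX_1})_1+(I_{\XX_2})_1=T_1$. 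The paper's approach is shorter and yields equality in all degrees $\neq1$ at once, but it is a black-box reference to \cite{CCG}; your approach is self-contained, constructive, and along the way establishes $|\XX_1|=\rk M_1$, $|\XX_2|=\rk M_2$, which the paper only obtains later in the proof of Theorem~\ref{waringdeggeneralcuspTHM}.
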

\begin{proof}
Let  $\XX$ be a minimal set of points apolar to $F$.
Following the proof of \cite[Theorem 3.2]{CCG},  and considering the following chain of inclusions
 \begin{equation}\label{eq:inclusion}
  I_{\XX}:(X_0,Y_0)+(X_0+Y_0) \subseteq F^{\perp}:(X_0,Y_0) + (X_0+Y_0) \subseteq J_1 \cap J_2,
 \end{equation}
 where
 $$
  J_1 = (X_0,X_1^{a_1+1},\ldots,X_n^{a_n+1},Y_0,\ldots,Y_m)
 \ \hbox{and} \ \ J_2 = (X_0,X_1,\ldots,X_n,Y_0,Y_1^{b_1+1},\ldots,Y_m^{b_m+1}),
 $$
 we get
  that $I_{\XX}= I_{\XX}:(X_0,Y_0)$ and
  $$
  \dim_{\CC}\left(I_{\XX}+(X_0+Y_0)\right)_i = \dim_{\CC}\left( J_1 \cap J_2 \right)_i, \text{ for any } i \neq 1;
 $$
 Therefore,  considering the inclusion \eqref{eq:inclusion}, it follows that
  $$
 \left(I_{\XX}+(X_0+Y_0)\right)_i = \left(F^\perp:(X_0,Y_0)+(X_0+Y_0) \right)_i, \text{ for any } i \neq 1;
 $$
 this is enough to conclude the proof.
\end{proof}

\begin{proof}[Proof of Theorem \ref{waringdeggeneralcuspTHM}]
 By Lemma \ref{coprime monomials LEMMA} and  \cite[Remark 3.3]{CCG},
  $\caW_F \subset V(X_0Y_0)\smallsetminus V(X_0,Y_0)$. Hence, we can write any minimal Waring decomposition of $F$ as
 \begin{equation}\label{eq:minimal dec 1}
  F = \sum_{i=1}^{r_1} (L^\prime_i)^d + \sum_{j=1}^{r_2} (L^{\prime\prime}_j)^d,
 \end{equation}
 where, $L_i^\prime\in V(Y_0)$ and $L^{\prime\prime}_j\in V(X_0)$, namely,
 $$L^\prime_i = \alpha_{i,0}x_0+\ldots+\alpha_{i,n}x_n + \beta_{i,1}y_1 + \ldots + \beta_{i,m}y_m,$$
where $\alpha_{i,0}\neq 0$, for $i = 1,\ldots,r_1$, and
$$L^{\prime\prime}_j = \gamma_{j,1}x_1+\ldots+\gamma_{j,n}x_n + \delta_{j,0}y_0 + \ldots + \delta_{j,m}y_m,$$
where $\delta_{j,0} \neq 0$, for $j = 1,\ldots,r_2$.

Let $\XX$ be the set of points apolar to $F$ corresponding to the decomposition \eqref{eq:minimal dec 1}.

 Now, we prove the following statement.

 \smallskip
 {\it Claim:} $L^\prime_i \in V(Y_0,\ldots,Y_m)$, for $i = 1,\ldots,r_1$, and $L^{\prime\prime}_j \in V(X_0,\ldots,X_n)$, for $j = 1,\ldots,r_2$.

 \begin{proof}[Proof of Claim.]Since $L_i^\prime\in V(Y_0)$ and $L^{\prime\prime}_j\in V(X_0)$ and
 since $\text{coeff}_{x_0}(L'_i) = \alpha_{i,0}\neq 0$, for any $i = 1,\ldots,r_1$, and $\text{coeff}_{y_0}(L''_j) = \delta_{j,0} \neq 0$, for any $j=1,\ldots,r_2$, in order to prove the Claim, it is enough to show that $X_0Y_1, \ldots, X_0Y_m$ and $X_1Y_0, \ldots, X_nY_0$ are in $I_{\XX}$.

  We have that $X_0Y_l \in F^{\perp}$; hence, by Lemma \ref{CCG Lemma},
  \begin{align}\label{eq:fact 1a}
  X_0Y_l \in I_{\XX} + (X_0+Y_0) & \Longrightarrow X_0Y_l+ G_l'\cdot (X_0+Y_0) \in I_{\XX}\subset F^{\perp}, (\deg(G_l') = 1) \\
  & \Longrightarrow G_l'\in \left( (X_0+Y_0)\circ F\right)^{\perp}. \nonumber
  \end{align}

  Since $X_kY_0\in F^{\perp}$ ($k=1,\ldots,n$), we get
  \begin{align}\label{eq:fact 1b}
  X_kY_0 \in I_{\XX} + (X_0+Y_0) & \Longrightarrow X_kY_0 + G_k''\cdot (X_0+Y_0) \in I_{\XX}\subset F^{\perp}, (\deg(G_k'') = 1) \\
  & \Longrightarrow G_k''\in \left( (X_0+Y_0)\circ F\right)^{\perp}. \nonumber
  \end{align}
  Now, we need to distinguish between two cases.

  \smallskip
   If $b_0 \geq 2$, then $\left( (X_0+Y_0)\circ F\right)^{\perp} = \left( x_1^{a_1}\cdots x_n^{a_n}+y_0^{b_0-1}y_1^{b_1}\cdots y_m^{b_m}\right)^\perp.$ Therefore, since $\deg G_l'=\deg G_k''=1$, we have
  $$
   G_l' = a'_lX_0 \text{ and } G''_k = a_k''X_0,\text{ for some } a'_l, a''_k \in \CC.
  $$
 So, from \eqref{eq:fact 1b}, we have that $ X_kY_0 + a''_kX_0(X_0+Y_0) \in I_{\XX}$, and,
multiplying by $X_0$, since $X_0Y_0\in I_{\XX}$, we get
  $$
 a''_kX_0^3 \in I_{\XX}, \text{ for } k = 1,\ldots,n.
  $$
  Since the coefficients of $x_0$ in the $L'_i$'s are different from $0$, it follows that $a''_k = 0$, for all $k=1,\ldots,n$. Therefore, $X_kY_0 \in I_{\XX}$, for all $k = 1,\ldots,n$. Hence, we get that $L''_j\in V(X_0,\ldots,X_n)$, for $j = 1,\ldots,r_2$, and we have
  $$L^{\prime\prime}_j =  \delta_{j,0}y_0 + \ldots + \delta_{j,m}y_m.$$

  From \eqref{eq:fact 1a},  since $X_0Y_0\in I_{\XX}$, we obtain that
    $$
   X_0Y_l + a'_lX_0(X_0 + Y_0) \in I_{\XX} \Longrightarrow X_0(Y_l+a'_lX_0) \in I_{\XX}.
  $$
    Consider the points $P_i=( \alpha_{i,0}, \ldots,\alpha_{i,n}, 0, \beta_{i,1},  \ldots , \beta_{i,m} )\in \XX$ ($i = 1,\ldots,r_1$) associated to the $L^\prime$'s. Since
  $  X_0(Y_l+a'_lX_0) \in I_{\XX} $ and
    $\alpha_{i,0} \neq 0$, for any $i = 1,\ldots,r_1$,
   it follows that
   $$b_{i,l} = -a'_l \alpha_{i,0}, \hbox { for  all } i = 1,\ldots,r_1 \hbox {  and }  l = 1,\ldots,m. $$
    Hence
     $$ \sum_{i=1}^{r_1} \left(L_i^\prime\right)^d  =
    \sum_{i=1}^{r_1} \big(\alpha_{i,0}(x_0-a'_1 y_1 +\ldots - a'_m y_m)+\alpha_{i,1}x_1\ldots+\alpha_{i,n}x_n\big)^d.$$
    Now, observe that, setting $y_0=\ldots=y_m=0$ in \eqref{eq:minimal dec 1}, we obtain that
  $$
  x_0x_1^{a_1}\cdots x_n^{a_n} = \sum_{i=1}^{r_1} (\alpha_{i,0}x_0+\ldots+\alpha_{i,n}x_n)^d;
  $$
so, replacing $x_0$ with $(x_0-a'_1 y_1 +\ldots - a'_m y_m)$ we get that
  \begin{align*}
   \sum_{i=1}^{r_1} \left(L_i^\prime\right)^d & =
    \sum_{i=1}^{r_1} \big(\alpha_{i,0}(x_0-a'_1 y_1 +\ldots - a'_m y_m)+\alpha_{i,1}x_1\ldots+\alpha_{i,n}x_n\big)^d \\ & = (x_0-a'_1 y_1 +\ldots - a'_m y_m)x_1^{a_1}\cdots x_n^{a_n}.
  \end{align*}
  It follows that
  $$F= x_0x_1^{a_1}\cdots x_n^{a_n} + y_0^{b_0}y_1^{b_1}\cdots y_m^{b_m}
  =  (x_0-a'_1 y_1 +\ldots - a'_m y_m)x_1^{a_1}\cdots x_n^{a_n} + \sum_{j=1}^{r_2} (\delta_{j,0}y_0 + \ldots + \delta_{j,m}y_m)^d.$$
Therefore, looking at the coefficient of the monomial $x_1^{a_1}\cdots x_n^{a_n}y_l$,   we obtain $ -a'_l=0$, for any $l = 1,\ldots,m$. Hence from  \eqref{eq:fact 1a},  we get
$X_0Y_l\in I_{\XX}$ for any $l = 1,\ldots,m$.

    \smallskip
    If $b_0 = 1$, then $\left( (X_0+Y_0)\circ F\right)^{\perp} = \left( x_1^{a_1}\cdots x_n^{a_n}+y_1^{b_1}\cdots y_m^{b_m}\right)^\perp.$ Therefore, from   \eqref{eq:fact 1a} and   \eqref{eq:fact 1b} we have
  $$
   G'_l = a'_lX_0+b'_lY_0 \text{ and } G''_k = a''_kX_0 + b''_kY_0,\text{ for some }a'_l,b'_l, a''_k,b''_k \in \CC.
  $$
  Since $X_0Y_0 \in I_{\XX}$, we have, for any $l = 1,\ldots,m$,
  \begin{align*}
   X_0Y_l + (a'_lX_0+b'_lY_0)(X_0+Y_0)\in I_{\XX} & \Longrightarrow X_0Y_l + a'_lX_0^2 + b'_lY_0^2 \in I_{\XX} \\
   & \Longrightarrow X_0^2Y_l + a'_lX_0^3 = X_0^2(Y_l+a'_lX_0) \in I_{\XX}.
  \end{align*}

  Considering the points $P_i=( \alpha_{i,0}, \ldots,\alpha_{i,n}, 0, \beta_{i,1},  \ldots , \beta_{i,m} )\in \XX$ ($i = 1,\ldots,r_1$) associated to the $L^\prime$'s, since $a_{i,0}\neq 0$,  it follows that $b_{i,l} = -a'_l\alpha_{i,0}$, for all $i = 1,\ldots,r_1$ and $l=1,\ldots,m$.

  Analogously, considering the monomials $X_kY_0$, for any $k =1,\ldots,n$, we get $\gamma_{i,k} = -b''_{k}\delta_{i,0}$.

Hence, recalling that $L^\prime_i = \alpha_{i,0}x_0+\ldots+\alpha_{i,n}x_n + \beta_{i,1}y_1 + \ldots + \beta_{i,m}y_m,$ we get
  \begin{align}\label{eq:fact 3}
   \sum_{i=1}^{r_1}(L'_i)^d & = \sum_{i=1}^{r_1}\big(\alpha_{i,0}(x_0-a'_1y_1+\ldots-a'_my_m)+\ldots+\alpha_{i,n}x_n\big)^d .
  \end{align}
  Now, by setting $y_0 = \ldots = y_m = 0$ in \eqref{eq:minimal dec 1}, we obtain
  $$
 x_0x_1^{a_1}\cdots x_n^{a_n} =  \sum_{i=1}^{r_1}(\alpha_{i,0}x_0+\ldots+\alpha_{i,n}x_n)^d + \sum_{j=1}^{r_2}(\gamma_{j,1}x_1 + \ldots + \gamma_{j,n}x_n)^d.
  $$
  By \eqref{eq:fact 3}  and this equality, replacing $x_0$ with $(x_0-a'_1y_1+\ldots-a'_my_m)$,
  we obtain
  $$ \sum_{i=1}^{r_1}(L'_i)^d  = (x_0-a'_1y_1+\ldots-a'_my_m)x_1^{a_1}\cdots x_n^{a_n} - \sum_{j=1}^{r_2}(\gamma_{j,1}x_1 + \ldots + \gamma_{j,n}x_n)^d.
  $$
Analogously, we get
  $$
   \sum_{i=1}^{r_1}(L''_i)^d  = (y_0-b''_1x_1+\ldots-b''_nx_n)y_1^{b_1}\cdots y_m^{b_m} - \sum_{i=1}^{r_1}(\beta_{i,1}y_1 + \ldots + \beta_{i,m}y_m)^d.
   $$
 Thus,
  \begin{align*}
   F = (x_0-a'_1y_1+\ldots-a'_my_m)x_1^{a_1}\cdots &x_n^{a_n} + (y_0-b''_1x_1+\ldots-b''_nx_n)y_1^{b_1}\cdots y_m^{b_m} \\ &- \sum_{i=1}^{r_1}(\beta_{i,1}y_1 + \ldots + \beta_{i,m}y_m)^d - \sum_{j=1}^{r_2}(\gamma_{j,1}x_1 + \ldots + \gamma_{j,n}x_n)^d.
  \end{align*}
  Looking at the coefficient of the monomial $x_1^{a_1}\cdots x_n^{a_n}y_l$ $(l = 1,\ldots,m)$, we get $a'_l = 0$. Similarly, considering the coefficient of the monomial $x_ky_1^{b_1}\cdots y_m^{b_m}$ $(k = 1,\ldots,n)$ we get $b''_k = 0$.

 Now, substituting $a'_l = 0$ in \eqref{eq:fact 1a}, we get that
 $$
  X_0Y_l + b'_lY_0^2 \in I_\XX;
 $$
 multiplying by $Y_0$, since $X_0Y_0\in I_{\XX}$, we have that
 $
  b'_lY_0^3 \in I_\XX, \text{ for any } l = 1,\ldots,m.
 $
 Since the points corresponding to the $L''$'s have the coefficient of $y_0$ different from $0$, it follows that $b'_l = 0$ and, therefore, $X_0Y_j \in I_{\XX}$. 

 Analogously, we can prove that $a''_k = 0$, for all $k = 1,\ldots,n$ and, consequently, that $X_iY_0 \in I_{\XX}$.

 This concludes the proof of the Claim.
 \end{proof}

 So we have proved that
 $$L^\prime_i = \alpha_{i,0}x_0+\ldots+\alpha_{i,n}x_n, \    \ \ \
L^{\prime\prime}_j =  \delta_{j,0}y_0 + \ldots + \delta_{j,m}y_m,$$
 and so
\begin{equation}\label{eq:minimal dec 2}
M_1 =x_0x_1^{a_1}\cdots x_n^{a_n} = \sum_{i=1}^{r_1}  L^\prime_i,\ \ \ \
 M_2 = y_0^{b_0}y_1^{b_1}\cdots y_m^{b_m}=\sum_{i=1}^{r_2}L^{\prime\prime}_j; \end{equation}
therefore, $r_i \geq \rk (M_i)$, for $i = 1,2$. Since $r_1 + r_2 = \rk(F) = \rk (M_1) + \rk (M_2)$, we have that actually $r_i = \rk(M_i)$, for $i = 1,2$. Thus, the expressions in \eqref{eq:minimal dec 2} are minimal decompositions of $M_1$ and $M_2$, respectively. It follows that $\caW_F\subset\caW_{M_1} \cup \caW_{M_2}$. Since the opposite inclusion is trivial, we conclude the proof.

\end{proof}

\subsection{Open problems} There are several aspect that we consider to be worth of further investigation. Here we list some example.

\medskip
(1) It is not clear to us whether some of our results hold when weaker assumptions are made. Consider, for example, Theorem \ref{waringdegTHM} for {\it any} monomial and not only when all exponents are at least two. Similarly, we do not know if Theorem \ref{reducibleTHM} holds in the case of $b=2$.

\medskip
(2) In all of our results $\caF_F$ is never empty and we conjecture that this is the case for any form $F$. Roughly speaking, the more Waring decompositions of $F$ we have, the smaller $\caF_F$ should be. Thus, this last conjecture, seems to have a strong relation with forms of high Waring rank which usually have many sum of powers decompositions. Note, for example in the case of plane cubics, that the smallest $\caF_F$ occurs for $F$ of type (10), that is for plane cubics of {\it maximal} Waring rank.

\medskip
(3) It would be interesting to consider the sets $\caW^t_F$ formed by $t$-uples of linear forms appearing in the same minimal Waring decomposition of $F$. Note that $\caW^{1}_F=\caW_F$, while $\caW^{\rk(F)}_F$ is the well known variety of sum of powers ${\it VSP}(F)$ defined in \cite{RS}.

\medskip
(4) When we are able to find an ideal defining $\caF_F$ this ideal is often not radical. Thus, the scheme of forbidden points comes with a non reduced structure, but it is not clear what this is telling us about $F$.

\end{document}